\theoremstyle{plain}
\newtheorem{theorem}{Theorem}[section]
\newtheorem{definition}[theorem]{Definition}
\newtheorem{lemma}[theorem]{Lemma}
\newtheorem{proposition}[theorem]{Proposition}
\theoremstyle{definition}
\def\tilde{\widetilde}
\numberwithin{equation}{section}
\renewcommand\hat{\widehat}
\def\ZZ{{\mathbb Z}}
\def\RR{{\mathbb R}}
\def\TT{{\mathbb T}}
\renewcommand{\phi}{\varphi}
\title[Front Propagation for Nonlocal KPP Reaction-Diffusion Equations in Periodic Media]
{\bf Front Propagation for Nonlocal KPP Reaction-Diffusion Equations\\
in Periodic Media}
\author[P.~E. Souganidis]{Panagiotis E. $\text{Souganidis}^{a,b}$}
\thanks{$A$: Department of Mathematics,
The University of Chicago, 5734 S. University Avenue, Chicago, IL 60637,\\
\indent{}\indent{}E-mail: \texttt{souganidis@math.uchicago.edu}}
\thanks{$B$ Partially supported by the National Science Foundation
grants DMS-1266383 and DMS-1600129 and the Office for Naval Research\\
\indent{}\indent{}Grant
N00014-17-1-2095. }
\author[A. Tarfulea]{Andrei $\text{Tarfulea}^{c,d}$}
\thanks{$C$: Department of Mathematics,
The University of Chicago, 5734 S. University Avenue, Chicago, IL 60637,\\
\indent{}\indent{}E-mail: \texttt{atarfulea@math.uchicago.edu}}
\thanks{$D$ Partially supported by the National Science Foundation
Research Training Group grant DMS-1246999.}
\date{}
\begin{document}

\maketitle
\begin{abstract}
We study front propagation phenomena for a large class of nonlocal KPP-type reaction-diffusion
equations in oscillatory environments, which 
 model various forms of
population growth with periodic dependence. The nonlocal diffusion is an
anisotropic integro-differential
operator of order $\alpha \in (0,2)$.
\end{abstract}

\section{Introduction}

\noindent{}We study the long time/large space asymptotic behavior and front propagation for a class of
models governed by reaction-diffusion equations of the form
\begin{equation}
u_t + L^\alpha [u] = f(x,u) \ \text{ in } \ \RR^d \times [0,\infty)  \text{ and } \
u(\cdot,0) = u_0 \ \text{ on } \ \RR^d .
\label{main-eq}
\end{equation}
The reaction nonlinearity $f(x,u)$ is periodic with respect to  $x$ in the unit cube $Q$, and satisfies a
KPP-type condition in $u$; see \eqref{takis1}  and \eqref{takis2}.\\
\\
The diffusion $L^\alpha$ belongs to the nonlocal class of singular integral operators given by
\begin{equation}
L^\alpha[u](x) := \int (u(x) - u(x+y)) K(x,y) dy
\label{L-alpha}
\end{equation}
with ``mutation kernel'' $K$ that is positive, symmetric,  $1$-periodic in $x$ and has
a ``thick tail'' and is singular at $y=0$ of order $\alpha$; see  \eqref{takis3} and \eqref{takis4}.
Such operators generalize the fractional Laplacian $(-\Delta)^{\alpha/2}$.\\
\\
We also require that the linearized reaction-diffusion operator
$L^\alpha - \partial_u f$ has a negative principal eigenvalue $\lambda_1$; see assumption
\eqref{negative-eigenvalue}.\\
\\
The long-time/large-space behavior of the solution to \eqref{main-eq} is characterized by the
limiting properties and behavior as $\epsilon \rightarrow 0$ of
\begin{equation}
u^\epsilon(x,t) = u(\hat{x}|x|^{1/\epsilon},t/\epsilon) ,
\label{u-rescaled}
\end{equation}
which solves
\begin{equation}
 \epsilon u_t^\epsilon + \int \left( u(\hat{x}|x|^{1/\epsilon},t/\epsilon) -
u(\hat{x}|x|^{1/\epsilon} - y,t/\epsilon) \right) K(\hat{x}|x|^{1/\epsilon},y) dy
=f(\hat{x}|x|^{1/\epsilon}, u^\epsilon) \ \text{ in } \RR^d \times [0,\infty) ,
\nonumber
\end{equation}
with initial datum
$$ u^\epsilon(x,0) = u_0(\hat{x}|x|^{1/\epsilon}), $$
where, for $x \neq 0$,  $\hat{x} := x / |x|$ is the unit vector in the direction of $x$.\\
\\
We assume (see \eqref{initial-data}) that $u_0$ has sufficient decay at infinity 
so that $u^\epsilon(\cdot,0)$
converges to a ``patch'' function supported on the unit ball, that is,
\begin{align*}
\lim_{\epsilon \rightarrow 0} u^\epsilon(x,0) = \left\{
\begin{array}{ll}
u_0(0) \ & \text{ if } |x| < 1 , \\
u_0(x) \ & \text{ if } |x| = 1 , \\
0 \ & \text{ if } |x| > 1 .
\end{array}
\right.
\end{align*}
The rescaling \eqref{u-rescaled} is motivated by the scaling strategy of
M\'{e}l\'{e}ard and Mirrahimi \cite{MM}, which is used to study the behavior of the solution to
$$ u_t + (-\Delta)^{\alpha/2} u = u R \left( \int_{\RR} u(x,t) dx \right) \ \text{ in }
\ \RR \times [0,\infty) . $$
The arguments in  \cite{MM} exploit the one-dimensional nature
of the problem and the homogeneous in $x$  nature of the nonlocal diffusion and the nonlinearity.\\
\\
The results of this paper generalize to the much broader family
of equations \eqref{main-eq}, which have a richer range of behaviors and require much more
precise estimates.\\
\\
A key step in our analysis is the exponential transformation $$u^\epsilon= \exp (v^\epsilon),$$ which leads to
\begin{equation}
v_t^\epsilon + \int \left( 1-
\frac{\exp(v^\epsilon(\eta^\epsilon(x,y),t)/\epsilon)}{\exp(v^\epsilon(x,t)/\epsilon)} \right) K dy
= \frac{f(\hat{x}|x|^{1/\epsilon}, u^\epsilon)}{u^\epsilon};
\label{eq-v-eps}
\end{equation}
\\ 
above and henceforth for convenience of notation 
\begin{equation}
\eta^\epsilon(x,y) := \hat{(\hat{x}|x|^{1/\epsilon} - y)}
|\hat{x}|x|^{1/\epsilon}-y|^{\epsilon} \ \text{ and } \
K:= K(\hat{x} |x|^{1/\epsilon},y).
\label{eta-def}
\end{equation}
\\
\noindent Our first result concerns the behavior, as $\epsilon \rightarrow 0$, of $v^\epsilon$. Note that, in view of the spatially oscillatory behavior of \eqref{eq-v-eps}, the $v^\epsilon$ homogenizes in the limit. 
\begin{theorem}
Assume \eqref{takis1}, \eqref{takis2}, \eqref{takis3}, \eqref{takis4}, \eqref{negative-eigenvalue}, and \eqref{initial-data}.
Then, as $\epsilon \rightarrow 0$  and locally uniformly, the $v^\epsilon$'s  converge to the unique solution $v$   to  the variational inequality
\begin{equation}
\max (v_t - |\lambda_1|, v) = 0 \ \text{ in } \ \RR^d \times [0,\infty)  \text{ and } \
v(x,0) = \min (0,-(d+\alpha) \log (|x|)),
\label{homogenized-equation}
\end{equation}
given by 
\begin{equation}
v(x,t) = \min (0, |\lambda_1| t - (d+\alpha) \log(|x|)).
\label{homogenized}
\end{equation}
\label{main-theorem}
\end{theorem}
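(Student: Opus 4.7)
I would use the half-relaxed limit method, so I define
\[
\overline{v}(x,t) = \limsup^*_{\epsilon\to 0} v^\epsilon(x,t), \qquad \underline{v}(x,t) = \liminf_{*,\epsilon\to 0} v^\epsilon(x,t),
\]
prove that $\overline v$ is a viscosity subsolution and $\underline v$ a supersolution of the variational inequality in \eqref{homogenized-equation}, verify that the initial data are achieved, and invoke a comparison principle to conclude $\overline v = \underline v = v$ given by \eqref{homogenized}. A priori bounds come from the KPP structure: since $f(\cdot,0)=0$ and $f(\cdot,1)\le 0$, comparison gives $0<u^\epsilon\le M$ uniformly, so $v^\epsilon\le\epsilon\log M\to 0$, which also yields the obstacle $\overline v\le 0$. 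The initial-data identification reduces to a direct computation: the decay hypothesis \eqref{initial-data} on $u_0$ of order $|x|^{-(d+\alpha)}$ gives $\epsilon\log u_0(\hat x|x|^{1/\epsilon})\to\min(0,-(d+\alpha)\log|x|)$.

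\textbf{Cell problem and perturbed test function.} The dynamics in the interior $\{v<0\}$ are controlled by the linearization at $u=0$: when $v^\epsilon<0$ strictly, $u^\epsilon=e^{v^\epsilon/\epsilon}$ is exponentially small and $f(\hat x|x|^{1/\epsilon},u^\epsilon)/u^\epsilon\approx f_u(\hat x|x|^{1/\epsilon},0)$. I would then use the principal eigenpair $(\lambda_1,\Phi_1)$ of the linearized periodic operator,
\[
L^\alpha \Phi_1 - f_u(\cdot,0)\Phi_1 = \lambda_1 \Phi_1, \qquad \Phi_1>0,\ \Phi_1 \text{ is } 1\text{-periodic},
\]
as the corrector. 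At a test point $(x_0,t_0)$ where $\overline v-\phi$ has a strict local max with $\overline v(x_0,t_0)<0$, I would test $v^\epsilon$ with the perturbed function
\[
\phi_\epsilon(x,t) := \phi(x,t) + \epsilon \log \Phi_1\bigl(\hat x|x|^{1/\epsilon}\bigr),
\]
so that $\exp(\phi_\epsilon/\epsilon)=\Phi_1(\hat x|x|^{1/\epsilon})\,e^{\phi/\epsilon}$. Plugging this into \eqref{eq-v-eps} and using the eigenvalue identity, the $O(1)$ balance gives $\phi_t(x_0,t_0)\le -\lambda_1=|\lambda_1|$, which is the desired subsolution inequality. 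The dual argument at a minimum yields the supersolution property.

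\textbf{Handling the nonlocal operator under the super-exponential rescaling.} The main obstacle is controlling, in the perturbed-test-function argument, the nonlocal term
\[
\int \Bigl(1 - e^{(\phi_\epsilon(\eta^\epsilon(x,y),t) - \phi_\epsilon(x,t))/\epsilon}\Bigr) K(\hat x|x|^{1/\epsilon},y)\,dy.
\]
I would split the integral into three regions: (i) $|y|\le r$ for small $r$, where $\eta^\epsilon(x,y)\to x$ smoothly so that $\phi_\epsilon(\eta^\epsilon)-\phi_\epsilon(x)$ is dominated by the corrector oscillation $\epsilon(\log\Phi_1(\hat x|x|^{1/\epsilon}-y)-\log\Phi_1(\hat x|x|^{1/\epsilon}))$; this is exactly the regime where the cell problem identity yields the $|\lambda_1|$ term; (ii) an intermediate region $r<|y|<|x|^{1/\epsilon}/2$, controlled by the $\alpha$-order bound on $K$ and smoothness of $\phi$, which gives an error of size $o_r(1)$; (iii) the far field $|y|\gtrsim|x|^{1/\epsilon}$, where the thick tail $K\sim |y|^{-(d+\alpha)}$ bounds the contribution by a term that vanishes in $\epsilon$ thanks to the uniform bound $v^\epsilon\le\epsilon\log M$. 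The delicate point is region (ii), which needs a careful split adapted to the singularity exponent $\alpha$.

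\textbf{Conclusion via uniqueness.} Once the half-relaxed limits are both viscosity solutions of \eqref{homogenized-equation} with the correct initial data, a standard comparison principle for obstacle-type Hamilton--Jacobi equations gives $\overline v\le\underline v$, hence equality and local uniform convergence. A direct check shows that $v(x,t)=\min(0,|\lambda_1|t-(d+\alpha)\log|x|)$ solves the variational inequality with the prescribed initial condition: on $\{|\lambda_1|t<(d+\alpha)\log|x|\}$, $v<0$ and $v_t=|\lambda_1|$; on the complement, $v=0$ and $v_t=0\le|\lambda_1|$. This establishes \eqref{homogenized}.
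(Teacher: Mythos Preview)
Your framework---half-relaxed limits together with a perturbed test function built from the principal eigenfunction---is the same as the paper's. The genuine gap is in your treatment of the far-field and intermediate parts of the nonlocal integral. You assert that the tail contribution ``vanishes in $\epsilon$ thanks to the uniform bound $v^\epsilon\le\epsilon\log M$'', but that bound (equivalently $\overline v\le 0$) is not enough. When you test at $(x_0,t_0)$ and try to show $\phi_t\le|\lambda_1|$, the problematic term is the ratio
\[
\frac{\exp(\phi_\epsilon(\eta^\epsilon(\bar x,y),\bar t)/\epsilon)}{\exp(\phi_\epsilon(\bar x,\bar t)/\epsilon)}
\]
for $|y|$ large. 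To control it you need a \emph{global} upper bound on $\phi_\epsilon$ (hence on $v^\epsilon$) at the shifted point $\eta^\epsilon(\bar x,y)$ and a \emph{local} lower bound on $\phi_\epsilon$ at $\bar x$. The only global information available a priori is the rough barrier of Proposition~\ref{prop-rough}, which gives an exponent $B_0>|\lambda_1|$. After the change of variables $y=|\bar x|^{1/\epsilon}z$ and the tail estimate, the resulting integral carries a factor $\exp\bigl((B_0 d/(d+\alpha)-|\lambda_1|)\bar t/\epsilon\bigr)$, and this blows up rather than vanishes whenever $B_0>|\lambda_1|(1+\alpha/d)$, which is the generic situation.

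The paper circumvents this by an \emph{iterative improvement}: starting from the rough exponents $B_0$ (upper) and $A_0$ (lower), it proves at each step the variational inequalities of Propositions~\ref{prop-induction-1} and~\ref{prop-induction-2}, which via Lemma~\ref{lem-induction-complete} upgrade the exponents to $B_{k+1}$ and $A_{k+1}$, subject to the step-size constraints $B_k/B_{k+1}<1+\alpha/d$ and $A_k-A_{k+1}<|\lambda_1|/(4(d+\alpha))$. These constraints are exactly what make the tail integral vanish at each stage. Iterating drives $B_k\downarrow|\lambda_1|$ and $A_k\downarrow 0$, sandwiching $v_*$ and $v^*$ directly between explicit functions converging to \eqref{homogenized}; no comparison principle for the limiting problem is invoked. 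Your single-step argument misses this bootstrap and, as written, the intermediate/far-field estimate in region~(iii) would fail.
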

\noindent{}Knowing the asymptotic behavior of the $v^\epsilon$'s,  we obtain some concrete information about the
$u^\epsilon$'s. 
\\

\noindent For this, we recall that \eqref{main-eq} admits a positive periodic steady state
solution $u^+: \RR^d \rightarrow \RR$ solving
$$ L^\alpha[u^+] = f(x,u^+) \ \text{ in } \ \RR^d . $$
Although the existence of $u^+$ is classical, for completeness, we sketch a proof summary in the Appendix.\\

\noindent{}Our second result is:
\begin{theorem}
Assume \eqref{takis1}, \eqref{takis2}, \eqref{takis3}, \eqref{takis4}, \eqref{negative-eigenvalue}, and \eqref{initial-data}.
Then, as $\epsilon \rightarrow 0$ and locally uniformly,
\begin{align}
\left\{
\begin{array}{ll}
\medskip
u^\epsilon(x,t) \rightarrow 0 \ \text{ in } \ \lbrace{(x,t): \ |x|^{d+\alpha} > e^{|\lambda_1|t} \rbrace} ,\\[2.5mm]
\frac{u^\epsilon(x,t)}{u^+(\hat{x}|x|^{1/\epsilon})} \rightarrow 1 \ \text{ in } \ \lbrace{(x,t): \
|x|^{d+\alpha} < e^{|\lambda_1|t} \rbrace} .
\end{array}
\right.
\label{limit-for-u}
\end{align}
\label{main-corollary}
\end{theorem}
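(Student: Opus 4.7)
The first line of \eqref{limit-for-u} is an immediate corollary of Theorem \ref{main-theorem} via the exponential transformation $u^\epsilon = \exp(v^\epsilon/\epsilon)$. On any compact subset $K$ of the open set $\{|x|^{d+\alpha} > e^{|\lambda_1|t}\}$, the limit $v(x,t) = |\lambda_1| t - (d+\alpha)\log|x|$ is bounded above by some $-c_K < 0$, and the local uniform convergence $v^\epsilon \to v$ forces $v^\epsilon \leq -c_K/2$ on $K$ for $\epsilon$ small. Hence $u^\epsilon \leq e^{-c_K/(2\epsilon)} \to 0$ uniformly on $K$.

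\medskip

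\noindent For the second line, fix $(x_0,t_0)$ with $|x_0|^{d+\alpha} < e^{|\lambda_1|t_0}$ and set $X_\epsilon := \hat{x}_0 |x_0|^{1/\epsilon}$, $T_\epsilon := t_0/\epsilon$, so the ratio to control is $u(X_\epsilon, T_\epsilon)/u^+(X_\epsilon)$ with $u$ the solution of \eqref{main-eq}. The strategy is the classical KPP dichotomy. The one-sided bound from above, $u \leq (1+o(1))u^+$, follows from comparing with the super-solution $C u^+$ (valid for $C = \max u_0/\min u^+$ since KPP concavity makes $f(x,u)/u$ non-increasing in $u$) combined with the global attractivity of $u^+$, which forces bounded positive solutions of \eqref{main-eq} to relax to $u^+$ exponentially, at a rate governed by the spectral gap of the linearization about $u^+$. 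The one-sided bound from below requires establishing a uniform-in-$\epsilon$ positive lower bound $u(X_\epsilon, T_\epsilon - s/\epsilon) \geq \delta_0 > 0$ at some earlier time (fixed $s > 0$, $\delta_0$ independent of $\epsilon$); the same attractor argument, now applied over the remaining time interval of length $s/\epsilon \to \infty$, then closes the proof.

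\medskip

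\noindent The crux, and the main obstacle, is the \emph{ignition} step that produces this positive lower bound. Choosing $s > 0$ small enough that $(x_0, t_0 - s)$ still lies in the interior region, Theorem \ref{main-theorem} yields $v^\epsilon(x_0, t_0 - s) \to 0$, so for any $\delta' > 0$ and $\epsilon$ small, $u(X_\epsilon, T_\epsilon - s/\epsilon) = u^\epsilon(x_0, t_0 - s) \geq e^{-\delta'/\epsilon}$. Since $\lambda_1 < 0$ is the principal eigenvalue of $L^\alpha - \partial_u f(\cdot,0)$, with a strictly positive periodic eigenfunction $\phi_1$, the natural candidate sub-solution of the linearized equation is $e^{|\lambda_1|(T - T_\epsilon + s/\epsilon)} \phi_1(X) \cdot (e^{-\delta'/\epsilon}/\sup \phi_1)$; the KPP bound $f(x,u) \geq \partial_u f(x,0) u - Cu^2$ then guarantees that this remains a sub-solution of the full equation as long as it stays below a fixed threshold. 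After a time of order $\delta'/(|\lambda_1|\epsilon)$ the sub-solution reaches that threshold, and choosing $\delta' < s|\lambda_1|$ leaves a window of length $\Theta(1/\epsilon)$ during which the attractor dynamics bring $u$ within $o(1)$ of $u^+$. The delicate point is propagating the sub-solution through the nonlocal, anisotropic operator $L^\alpha$, which requires a comparison argument tailored to the thick tail of $K$; once this is established, the exterior estimate and the attractor step are routine.
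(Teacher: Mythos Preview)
Your treatment of the exterior region is correct and matches the paper. For the interior region your ignition step has a genuine gap: the candidate sub-solution $c\,e^{-\delta'/\epsilon}\phi_1$ is, since $\phi_1=e^g$ is periodic and strictly positive, bounded below by a fixed positive multiple of $e^{-\delta'/\epsilon}$ on all of $\RR^d$, whereas $u(\cdot,T_\epsilon-s/\epsilon)$ decays like $|x|^{-(d+\alpha)}$ at infinity (Proposition~\ref{prop-rough}). The ordering $c\,e^{-\delta'/\epsilon}\phi_1\le u(\cdot,T_\epsilon-s/\epsilon)$ therefore fails for $|x|$ large, and the comparison principle for the nonlocal operator $L^\alpha$ requires the inequality on all of $\RR^d$. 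Truncating $\phi_1$, or replacing $\lambda_1$ by a Dirichlet-type eigenvalue on a growing ball, reintroduces precisely the tail interaction you flag as ``delicate'' without saying how to control it; your subsequent attractor step inherits the same difficulty, since a pointwise lower bound at $X_\epsilon$ alone does not prevent the nonlocal term from dragging $u$ down via the exterior where $u$ is small.

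The paper circumvents this by never isolating an ignition step. It sets $w=u/u^+$, obtains $\limsup_{t}\sup_x w\le 1$ from a direct global-maximum argument on the equation satisfied by $w$ (your super-solution $Cu^+$ idea, essentially), and for the lower bound introduces the weighted quantity $W = w\cdot\bigl(M + e^{-|\lambda_1|t}|x|^{\xi(d+\alpha)}\bigr)$ with $\xi>1$ close to $1$. The weight makes $W$ coercive, so it attains a global minimum at some $\bar x$ lying in the interior region; evaluating the equation for $W$ there produces a cross term $\bar I=\int(h(\bar x,\bar t)-h(\bar x+y,\bar t))\,w(\bar x+y,\bar t)\,\overline K(\bar x,y)\,dy$, which is split into short, intermediate, and far ranges and shown to satisfy $\liminf_{\epsilon\to 0}\bar I\ge 0$ using the upper bound on $w$ supplied by Theorem~\ref{main-theorem} on a large ball and \eqref{rough-bounds} beyond it. This yields $W_t\ge(N(\bar x,w)-|\lambda_1|/M)W$ at the minimum, hence $\liminf w\ge 1-\delta$ for arbitrary $\delta>0$. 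The weighted-barrier estimate is exactly the device that absorbs the tail interaction blocking your eigenfunction comparison.
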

\noindent{}It follows from \eqref{limit-for-u} that, as $\epsilon \to 0$, 
\begin{equation*}
u^\epsilon \rightharpoonup \overline {U}:=  \frac{1}{|Q|} \int_Q u^+(x) dx.
\end{equation*}
We remark that, since $u^+$ is in general non-constant, we cannot expect that $u^\epsilon$ converges
in the inner region. \\
\\
Reaction-diffusion equations such as, for simplicity, the Fisher-KPP equation
\begin{equation}
u_t - \Delta u = f(u) \ \text{ in } \ \RR^d \times [0,\infty)
\label{Fisher-KPP}
\end{equation}
have been studied extensively, going back to the original work of Fisher \cite{Fish} and
Kolmogorov, Petrovskii, and Piskunov \cite{KPP}. We refer to Aronson and Weinberger \cite{AW} and \cite{AW2},
Bramson \cite{Bram}, Freidlin \cite{Freid}, Evans and Souganidis \cite{E-S}, and Majda and Souganidis \cite{MS}
for the derivations as well as up to date methods to study the long time/large-space asymptotic behavior.\\
\\
Equations such as \eqref{main-eq} arise from mesoscopic processes involving L\'{e}vy
flights seen in models of population dynamics and evolutionary ecology. For example,
Jourdain, M\'{e}l\'{e}ard, and Woyczynski \cite{JMW} show a rigorous derivation
when modeling Darwinian evolution of phenotypic variation, treating the population as
a stochastic point process; see also Fournier and M\'{e}l\'{e}ard \cite{F-M}.
When the probability distribution of mutations has a heavy tail
and belongs to the domain of attraction of a stable law, the corresponding diffusion admits jumps
(see Baeumer, Kovacs, and Meerschaert \cite{BKM} and Hillen and Othmer \cite{HO})
and, moreover, includes a spatial inhomogeneity. That is, the strength of the nonlocal diffusion
realistically depends on the genetic or spatial landscape, which motivates the treatment
of models with inhomogeneous reactions and diffusions.\\
\\
It turns out (see \cite{Freid}, \cite{E-S}, and \cite{MS})
that the asymptotic behavior of the solution to \eqref{Fisher-KPP} is recognized by a hyperbolic scaling
$(x,t) \rightarrow (x/\epsilon, t/\epsilon)$. This is consistent with the fact that the expected
advancing level sets (the fronts) move with algebraic (in time) velocity. That is, for every $h$ in the range
of $u$ and all $t$ sufficiently large, the set $\lbrace{ x \ : \ u(x,t) = h \rbrace}$ is comparable in a
quantified way to the set $\lbrace{|x| \approx c t^\gamma \rbrace}$ for some $c$; see \cite{Freid}, \cite{E-S}, and  \cite{MS} for the results for linear ($\gamma =1$) growth, and the recent works
of Berestycki, Mouhot, and Raoul \cite{BMR}, Bouin, Henderson, and Ryzhik \cite{BoHeRy},
and Henderson, Perthame, Souganidis \cite{HPS} for the superlinear ($\gamma > 1$) case.\\
\\
Many physical models, like, for example, neutron emission in nuclear physics, fast migrations in biology, and
convective heating in forest fires,  use processes  that run faster than what would be expected
for Brownian motion, either due to a fundamental sparseness of the medium or a long-range dispersion effect
in the measured physical quantity.
As a result the  nonlocal equations give rise to fronts moving with
exponential (in time) velocities. This behavior, that was widely known in the applied literature, was first shown rigorously by Cabr\'{e} and Roquejoffre
\cite{CR}, who considered the front-propagation and asymptotic dynamics of the nonlocal homogeneous problem
$$ u_t + A u = f(u) \ \text{ in } \ \RR^d \times [0,\infty), $$
with  $A$ the infinitesimal generator of a Feller semigroup and $f$ of KPP-type as in
\eqref{takis2}, but independent of $x$. \\
\\
In Cabr\'{e}, Coulon, and Roquejoffre \cite{CCR}, the authors study a simpler version of \eqref{main-eq}
with fractional Laplacian $L^\alpha = (-\Delta)^{\alpha/2}$ for $\alpha \in (0,1)$ and $f(x,u) =
\mu(x)u-u^2$, and  give an estimate on the exponential (in time) propagation for the low-value
level sets of $u$, that is, $\lbrace{u(x,t)=h \rbrace} \approx \lbrace{|x|^{d+\alpha} = c_\lambda
\exp (|\lambda_1| t) \rbrace}$ for all $h$ sufficiently small depending on $\mu$. For more general
KPP-type nonlinearities, Roquejoffre and Tarfulea \cite{R-me} further refine these estimates and show the
level sets symmetrize and the solutions become asymptotically flat over time.\\
\\
In this paper, we study the behavior of all level sets for general nonlocal
anisotropic $L^\alpha$ with no restriction on the order. In particular, the mutation kernel $K$ need not
be either translation invariant or homogeneous in $y$.\\
\\
While completing this paper, we became aware of a recent work of Bouin, Garnier, Henderson,
and Patout \cite{BGHP} that considers a locally uniform homogenization result for the
class of equations
$$ u_t = J*u-u + u(1-u) \ \text{ in } \ \RR \times [0,\infty) , $$
for kernels $J$ that are nonsingular, homogeneous in $x$, and monotone.
Although the assumptions of \cite{BGHP} allow for kernels with more general decay at infinity albeit one dimensional, our results
are focused on kernels with potential singularity at the origin and in any dimension. This generalizes the fractional Laplacian
as well as many other integro-differential operators, and introduces significant technical difficulties,
which we overcome here.\\
\\
The proof of Theorem \ref{main-theorem} is based on viscosity solution techniques originated in \cite{E-S},  \cite{E-S-2},
Barles, Evans, and Souganidis \cite{BES} and \cite{MS}.  The main
steps are $(i)$ obtaining a priori $L^\infty$-bounds on the $v^\epsilon$'s, $(ii)$ using
the half-relaxed upper and lower limits in conjunction with the perturbed test function methods to obtain
the equation satisfied by the limit (see \cite{E-S} and Evans and Souganidis \cite{E-S-2}),
and $(iii)$ deriving the claimed limit properties for the $u^\epsilon$'s.\\
\\
It turns out that steps $(i)$ and $(ii)$ above are connected with each other. Indeed, the obvious bounds implied
by \eqref{initial-data} (see Proposition \ref{prop-rough} and Lemma \ref{lem-ACC}) are not enough to yield
the limiting equation. We go around this difficulty by an iterative (inductive)
argument along which we improve at each step the upper and lower bounds (see Propositions
\ref{prop-induction-1} and \ref{prop-induction-2}), eventually obtaining the claimed result.

\subsection*{Organization of the paper}
\noindent{}The rest of the paper proceeds as follows.
In Section 2 we state all the relevant assumptions and 
prove a preliminary bound on the solution.
In Section 3, we state in Proposition \ref{prop-induction-1} and Proposition  \ref{prop-induction-2} the inductive arguments and give the proof of Theorem \ref{main-theorem}.
Proposition \ref{prop-induction-1} and Proposition  \ref{prop-induction-2} are shown, respectively, in
Section 4 and Section 5.
In Section 6 we give the proof of Theorem \ref{main-corollary}.
We also include an Appendix that contains some useful preliminary calculations and context. In 
Appendix A we recall the definitions of viscosity sub- and super-solutions, as well as
the half-relaxed upper and lower limits.
In Appendix C we show the existence of a positive periodic first eigenfunction $e^g$ to the linearized
operator $L^\alpha - \partial_u f$, as well as a positive steady state $u^+$ when the
corresponding eigenvalue is negative. Finally,
in Appendix C we prove a technical lemma used in Section 2.

\subsection*{Notation} We write $Q$ and $B_r(x)$ respectively  for the unit cube and the open ball of radius $r$ and center $x$  in $\RR^d$. Given $A, B \in \RR$, $A\lesssim B$ means that there exists some constant $c>0$, which is independent of the various parameters, such that $A\leq B$.  Throughout the paper, $\|l\|$ denotes the sup-norm of a given bounded function $l$. We denote by $E^C$ the complement of $E\subset \RR^d.$

\subsection*{Terminology}  A function which is periodic on the unite cube $Q$ is referred to as $1$-periodic. Throughout the paper sub-and super-solutions are understood in the viscosity sense.

\section{Assumptions and Preliminary Results}
\noindent{}For the reaction nonlinearity $f$, we assume that, for each $u\in \RR$,  
\begin{equation}\label{takis1}
x \to f(x,u) \ \text{ is $1$-periodic}, 
\end{equation}
and that there exists a constant $M > 0$, which is independent of $x$, such that, for all $x\in\RR^d$, 
\begin{equation}\label{takis2}
s \rightarrow f(x,s)/s \text{ is decreasing in } \ s,  \
f(x,0) = 0, \ \text{ and } \ f(x,\cdot ) \leq 0 \ \text{in} \ [M,\infty).
\end{equation}
\noindent Observe that, in view of \eqref{takis1} and \eqref{takis2},
we may write
\begin{equation}
f(x,u) = \mu(x)u - E(x,u) ,
\label{f-KPP}
\end{equation}
where $\mu(x): = \partial_u f(x,0)$ and $E(x,u)$ is an error term such that, for some $\overline {M}>0$ 
and $\bar{m}>0$,
\\
\begin{equation}
\overline{m} u^2 \leq E(x,u) =: \partial_u f(x,0)u - f(x,u) \leq \overline{M} u^2 .
\label{E-assumption}
\end{equation}
\\
As far as the mutation kernel $K$ is concerned,
we assume that 
\\
\begin{equation}\label{takis3}
K \ \text{ is positive, $1$-periodic and $C^2$ \ in  \ $ x$, 
and  \ symmetric in \ $y$,}
\end{equation}
\\
and there exists $C_K>0$ such that, for $\mathcal{K} = K, \ |D_x K|, \  \ |D^2_x K|$ and all $x, y \in \RR^d,$
\begin{equation}\label{takis4}
C^{-1}_K \leq \mathcal{K}(x,y)|y|^{d+\alpha} \leq C_K .
\end{equation}
\\
The algebraic decay of the tail of $K$, seen in the upper and lower bounds of  \eqref{takis4}
for $|y|$ large, will be very important. Indeed, this is the mechanism which produces exponential
(in time) propagation of fronts. \\
\\
We remark, however, that
the proofs of our main results do not rely on $K$ having a singularity as $|y| \rightarrow 0$. As a matter of fact
the symmetric singularity at $y=0$ allowed by \eqref{takis4} is a technical obstacle which we overcome
with careful estimates. Actually  all the arguments in the proofs of Theorem \ref{main-theorem} and  Theorem \ref{main-corollary}
remain valid if, instead  of  \eqref{takis4}, we assume 
$$\frac{C_K^{-1}}{1+ |y|^{d+\alpha}} \leq \mathcal{K} \leq \frac{C_K}{|y|^{d+\alpha}} . $$
The singular lower bound of \eqref{takis4} is only needed to prove the existence of
a positive principal eigenfunction to the linearized operator, where we need to compare
$\langle L^\alpha[u],u \rangle$ with a Sobolev norm; see \eqref{negative-eigenvalue} below and
Lemma \ref{prop-eigen-2}.\\
\\
The symmetry assumption on $K$ allows us to write, after  a change of variables,
\begin{equation}
L^\alpha[u](x) = \frac{1}{2} \int (2u(x) - u(x+y) - u(x-y))K(x,y)dy .
\label{symmetrized-L}
\end{equation}
Moreover, we assume that the bottom of the spectrum for the linearized stationary
operator is negative, that is,
\begin{equation}
\text{there exists a positive, $1$-periodic } \ e^g \ \text{ satisfying } \
(L^\alpha - \mu) e^g = \lambda_1 e^g \ \text{ with } \ \lambda_1 < 0 .
\label{negative-eigenvalue}
\end{equation}
For $L^\alpha$ defined as in \eqref{L-alpha}, a positive (and unique) first eigenfunction $e^g$ always exists.
For completeness, we include a sketch of the proof of this in the Appendix (Proposition \ref{prop-eigen}).\\
\\
As for the role of the principal eigenvalue $\lambda_1$ of the linearized operator, we recall that,
for \eqref{main-eq} with $L^\alpha$ the fractional Laplacian,
Berestycki, Roquejoffre, and Rossi \cite{BRR} show that
the bottom of the spectrum of the operator $(-\Delta)^{\alpha/2}-\partial_u f$
is the indicator to a drastic shift in the asymptotic behavior for \eqref{main-eq}.
If $\lambda_1 \geq 0$, the model exhibits extinction, that is, $u(\cdot,t) \rightarrow 0$
as $t \rightarrow \infty$. On the other hand, if $\lambda_1 < 0$, the model exhibits invasion, that is,
$u$ converges, as $t\to \infty$ uniformly on compact
sets,  to a unique positive steady solution.
The sign of $\lambda_1$ therefore measures the strengths of the depletion and growing zones 
$\lbrace{ x \ : \ \partial_u f(x,0) \leq 0 \rbrace}$ and 
 $\lbrace{ x \ : \ \partial_u f(x,0) > 0 \rbrace}$ 
with respect to the ambient diffusion. Since we wish to study the
nontrivial asymptotic behavior of the solutions to \eqref{main-eq}, we
must work with models that exhibit invasion; hence the need for assumption \eqref{negative-eigenvalue}.
Theorem \ref{main-corollary} then extends the invasion results of \cite{BRR} to a much larger class
of models.\\
\\
Lastly,  we assume that
$u_0\in C(\RR^d)$  and there exist positive constants $c_1$ and $c_2$ such that, 
for all $x \in \RR^d$,
\begin{equation}
\frac{c_1}{1+|x|^{d+\alpha}} \leq u_0(x) \leq \frac{c_2}{1+|x|^{d+\alpha}} .
\label{initial-data}
\end{equation}
\\
An immediate consequence of \eqref{takis1}, \eqref{takis2}, \eqref{takis3},  \eqref{takis4}, \eqref{negative-eigenvalue}, and \eqref{initial-data} are
the bounds given in the next proposition.
\begin{proposition}
Let $u$ be the solution to \eqref{main-eq} 
and assume  \eqref{takis1}, \eqref{takis2}, \eqref{takis3}, \eqref{takis4}, \eqref{negative-eigenvalue}, 
and \eqref{initial-data}.
Then there exist positive constants $B_0 > |\lambda_1|$, $c_0 < C_0$, and $A_0$ such that,
for all $(x,t)$,
\begin{equation}
\frac{c_0 e^{-A_0 t}}{1+e^{-|\lambda_1| t}|x|^{d+\alpha}} \leq u(x,t)
\leq \frac{C_0}{1+e^{-B_0 t}|x|^{d+\alpha}} .
\label{rough-bounds}
\end{equation}
\label{prop-rough}
\end{proposition}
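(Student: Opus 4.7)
The plan is to prove both inequalities via the comparison principle, using explicit smooth super- and sub-solutions. The technical core of both constructions is the pointwise estimate
\begin{equation*}
|L^\alpha[\phi_\tau](x)|\leq C_1\phi_\tau(x),\qquad \phi_\tau(x):=\frac{1}{1+\tau|x|^{d+\alpha}},
\end{equation*}
valid uniformly in $\tau\in(0,1]$ and $x\in\RR^d$, with $C_1$ depending only on $C_K,d,\alpha$. I would prove this by symmetrizing via \eqref{symmetrized-L} and splitting the integral into $|y|\leq|x|/2$ (where the second-order Taylor expansion of $\phi_\tau$, together with $\alpha<2$, yields integrability at $y=0$) and $|y|>|x|/2$ (where the decay bound $K\leq C_K|y|^{-(d+\alpha)}$ and the decay of $\phi_\tau$ at $x\pm y$ combine, after a further split with respect to $|z|=|x+y|$ into $|z|\leq 3|x|$ and $|z|>3|x|$). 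The resonance between the algebraic tail of $\phi_\tau$ and that of $K$ makes this integral borderline; verifying the bound is the main obstacle.

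For the upper bound, I would combine two supersolutions. Since $f(x,\cdot)\leq 0$ on $[M,\infty)$ by \eqref{takis2}, the constant $\overline{C}:=\max(M,\|u_0\|)$ is a stationary supersolution (using $L^\alpha[\text{const}]=0$), so $u\leq \overline{C}$. Separately, since $f(x,u)\leq \|\mu\|u$, I take the ansatz
\begin{equation*}
V(x,t):=A_1 e^{B_0 t}\phi_1(x),
\end{equation*}
for which the key estimate gives $V_t+L^\alpha V\geq (B_0-C_1)V\geq \|\mu\|V$ whenever $B_0\geq \|\mu\|+C_1$. Choosing $B_0>\max(|\lambda_1|,\|\mu\|+C_1)$ and $A_1$ large enough that $V(\cdot,0)\geq u_0$ (possible by \eqref{initial-data}), comparison yields $u\leq V$. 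Combining, $u\leq\min(\overline{C},V)$, which is bounded by a constant multiple of $C_0/(1+e^{-B_0 t}|x|^{d+\alpha})$.

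For the lower bound, I would take the subsolution ansatz
\begin{equation*}
\underline{u}(x,t):=c_0 e^{-A_0 t}\phi_{e^{-|\lambda_1|t}}(x).
\end{equation*}
A direct computation gives $\underline{u}_t=\bigl(-A_0+|\lambda_1|(1-\phi_{e^{-|\lambda_1|t}})\bigr)\underline{u}\leq (|\lambda_1|-A_0)\underline{u}$, and the key estimate produces $L^\alpha\underline{u}\leq C_1\underline{u}$. Using the KPP lower bound $f(x,u)\geq \mu(x)u-\overline{M}u^2\geq -\|\mu\|u-\overline{M}u^2$ from \eqref{E-assumption}, the subsolution inequality $\underline{u}_t+L^\alpha\underline{u}\leq f(x,\underline{u})$ reduces pointwise to $-A_0+|\lambda_1|+C_1+\|\mu\|+\overline{M}\underline{u}\leq 0$, which is satisfied once $A_0\geq |\lambda_1|+C_1+\|\mu\|+\overline{M}c_0$ (using $\underline{u}\leq c_0$). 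Finally, choosing $c_0\leq c_1$ guarantees $\underline{u}(\cdot,0)\leq u_0$ by \eqref{initial-data}, and comparison yields $u\geq\underline{u}$. The specific exponent $|\lambda_1|$ in the profile is chosen to fix the natural spatial scale $|x|^{d+\alpha}\approx e^{|\lambda_1|t}$ matching the expected rate at which the rough support of $u$ expands, although at this stage the bound itself is weak (the prefactor $e^{-A_0 t}$ decays); the precise invasion rate is recovered only later through the inductive refinement.
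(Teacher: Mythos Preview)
Your proposal is correct and follows essentially the same strategy as the paper: both rely on the comparison principle with explicit barriers built from the profile $\phi_\tau(x)=(1+\tau|x|^{d+\alpha})^{-1}$, and both hinge on the same technical estimate $|L^\alpha[\phi_\tau]|\lesssim \phi_\tau$ (the paper's Lemma~\ref{lem-ACC} records the sharper bound $|L^\alpha[\phi_\tau]|\leq D\,\tau^{\alpha/(d+\alpha)}\phi_\tau$, but the extra decay in $\tau$ is not actually needed here). The lower-bound barrier is identical in both arguments.

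The one genuine difference is in the upper bound. The paper takes directly the \emph{bounded} function $W(x,t)=C_0/(1+e^{-B_0 t}|x|^{d+\alpha})$ and checks it is a supersolution; this exploits the quadratic lower bound $E(x,u)\geq \overline m\,u^2$ to absorb the term $-(B_0/C_0)W^2$ coming from $W_t$. You instead take the unbounded $V(x,t)=A_1e^{B_0 t}\phi_1(x)$, which only needs $f(x,u)\leq\|\mu\|u$, and then cap it with the constant supersolution $\overline C$. Your route is slightly more elementary (it avoids using the $\overline m u^2$ term), at the cost of a small extra step to recombine $\min(\overline C,V)$ into the stated form $C_0/(1+e^{-B_0 t}|x|^{d+\alpha})$. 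Both are fine; the paper's version has the advantage of producing the final barrier shape in one stroke.
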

\noindent{}The estimate follows from the fact that, for some positive constants $c, c'$ and $c''$ and all $x$ and $u$,
\begin{equation}\label{takis10}
-cu - cu^2 \leq f(x,u) \leq c'u - c'' u^2.
\end{equation}
Then Proposition \ref{prop-rough} is shown by establishing that the lower and upper bounds of
\eqref{rough-bounds} are respectively sub- and super-solutions to \eqref{main-eq}.\\
\\
To prove Proposition \ref{prop-rough} and throughout the paper we will require the following technical lemma which is proved in Appendix C.
\begin{lemma} Assume \eqref{takis1}, \eqref{takis2}, \eqref{takis3},  and \eqref{takis4}, and let
$ h(x,t) = \frac{1}{1+e^{-\lambda t} |x|^{d+\alpha}} . $
Then there exists a constant $D>0$ depending only on $d$ and $K$ such that, for all $(x,t)$,
\begin{equation}
\left| L^\alpha [h] \right| \leq
\frac{D e^{-\alpha \lambda t /(d+\alpha)}}{1+e^{-\lambda t}|x|^{d+\alpha}} .
\label{ACC-bound}
\end{equation}
\label{lem-ACC}
\end{lemma}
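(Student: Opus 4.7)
My plan is to exploit the self-similar structure of $h$ by rescaling out the time dependence. Setting $r := e^{-\lambda t/(d+\alpha)}$ and $g(z) := 1/(1+|z|^{d+\alpha})$, I first observe that $h(x,t) = g(rx)$. Using the symmetric form \eqref{symmetrized-L} of $L^\alpha$, the upper bound in \eqref{takis4}, and the change of variable $z = ry$ in the integral defining $L^\alpha[h]$, one gets
\begin{equation*}
|L^\alpha[h](x,t)| \;\lesssim\; r^\alpha \int_{\RR^d} \frac{|2g(w)-g(w+z)-g(w-z)|}{|z|^{d+\alpha}}\,dz \;=:\; r^\alpha J(w), \qquad w := rx.
\end{equation*}
Since $r^\alpha g(w) = e^{-\alpha\lambda t/(d+\alpha)} h(x,t)$, the lemma reduces to the scale-invariant estimate $J(w) \lesssim g(w)$ uniformly in $w\in\RR^d$, with an implicit constant depending only on $d$ and $C_K$.

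For this I would split into a near-field case $|w|\leq 2$ and a far-field case $|w|>2$. The near-field case is routine: $g$ is smooth with globally bounded second derivatives, so splitting $J(w)$ at $|z|=1$, using a second-order Taylor expansion on the inner region (whose $|z|^{2-d-\alpha}$ factor is integrable near the origin because $\alpha<2$) and the $L^\infty$ bound on the outer region (integrable at infinity since $d+\alpha>d$), gives a uniform bound, which is comparable to $g(w)$ on the compact set $|w|\leq 2$.

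The main technical obstacle is the far-field case $|w|>2$, where one must recover the sharp decay $|w|^{-(d+\alpha)}$. I would split the integral at $|z|=|w|/2$. On the inner region the segment $w + \theta z$ remains in $\{|\cdot|\geq |w|/2\}$, so the explicit bound $|D^2 g(w')| \lesssim |w'|^{-(d+\alpha+2)}$ for $|w'|$ large, combined with Taylor's theorem, gives a gain of $|z|^2$ and produces a contribution of order $|w|^{-d-2\alpha}$ after an elementary integration. For the outer region I would estimate the three terms $g(w)$, $g(w+z)$, $g(w-z)$ separately; the $g(w)$ contribution is immediate. For $g(w\pm z)$ the key move is the change of variable $u = w\pm z$, followed by a subdivision into $|u|\leq |w|/4$ (where $|u-w|\gtrsim |w|$ and $g \in L^1(\RR^d)$) and $|u|>|w|/4$ (where $g(u)\lesssim |u|^{-(d+\alpha)}$ and a homogeneity rescaling $u = |w|v$ shows the integral scales as $|w|^{-d-2\alpha}$). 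Every sub-region produces a contribution bounded by $|w|^{-(d+\alpha)}$, as required.

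Combining the two cases yields $J(w) \lesssim 1/(1+|w|^{d+\alpha})$ uniformly, and unravelling the scaling gives \eqref{ACC-bound} with a constant depending only on $d$ and $C_K$. I note that only the upper bound in \eqref{takis4} and the positivity of $K$ enter the argument, consistent with the authors' remark that the singular lower bound on $K$ is not needed for the main estimates; in particular neither the $x$-regularity of $K$ nor the lower bound of \eqref{takis4} plays any role in this lemma.
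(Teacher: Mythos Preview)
Your argument is correct and follows the same core strategy as the paper: split the integral at a radius comparable to the distance from the origin, use a Taylor bound on the inner region, and crude pointwise/integrability bounds on the outer region. The presentation differs in one useful way: you first rescale via $z=ry$ with $r=e^{-\lambda t/(d+\alpha)}$ to remove the time dependence entirely, reducing the lemma to the single scale-invariant estimate $J(w)\lesssim g(w)$; the paper instead carries the $e^{-\lambda t}$ factors through and works with a splitting radius $R=R(x,t)$ which, after undoing your scaling, amounts to the same thing (roughly $|w|/2$ for $|w|$ large, a constant for $|w|$ small). Your treatment of the outer $g(w\pm z)$ term via the substitution $u=w\pm z$ and a further subdivision is a bit more elaborate than the paper's (which just uses $K\lesssim R^{-d-\alpha}$ on $\{|y|>R\}$ together with $h(\cdot,t)\in L^1$), but yields the same outcome.

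One minor correction: the claim that $g$ has globally bounded second derivatives fails when $d+\alpha<2$ (i.e.\ $d=1$ and $\alpha<1$), since $|z|^{d+\alpha}$ is not $C^2$ at the origin. In that regime replace the second-order Taylor estimate on the inner region by a first-order one, which suffices because $|z|^{1-d-\alpha}$ is still integrable near $0$ for $\alpha<1$. The paper's bound \eqref{last-short-bound} shares this tacit restriction.
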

\begin{proof}[Proof of Proposition \ref{prop-rough}:]
\noindent{}It follows from \eqref{initial-data} that \eqref{rough-bounds}
holds at $t=0$ for an appropriate choice of $c_0$ and $C_0$.\\
\\
Let  $W(x,t) = C_0 / (1+e^{-B_0 t} |x|^{d+\alpha})$ and
$w(x,t) = c_0 e^{-A_0 t} / (1+e^{-|\lambda_1| t} |x|^{d+\alpha})$. Since
\eqref{main-eq} satisfies a comparison principle, we show that $W$ and $w$ are in fact
super- and sub-solutions to \eqref{main-eq} for an appropriate choice of constants.\\
\\
Set $\mu_+ := \sup_{x} \mu(x)$ and $\mu_- := \inf_{x} \mu(x)$. We remark that $\mu_+ >
|\lambda_1|$ and that $\mu_-$ might be negative.
It follows from \eqref{f-KPP} and \eqref{E-assumption} that
$$ \mu_- u - \overline{M}u^2 \leq f(x,u) \leq \mu_+ u - \overline{m} u^2 . $$
For  $B_0 > D + \mu_+ > |\lambda_1|$ and $C_0> \max(1,B_0/\overline{m})$ large enough
that $u_0< W(\cdot,0)$, Lemma \ref{lem-ACC} yields
$$ W_t + L^\alpha[W] \geq
\frac{B_0 C_0 e^{-B_0 t}|x|^{d+\alpha}}{(1+e^{-B_0 t}|x|^{d+\alpha})^2}
- \frac{D C_0 e^{-\alpha B_0 t/(d+\alpha)}}{1+e^{-B_0 t}|x|^{d+\alpha}}
= \left( B_0 - D e^{-\alpha B_0 t/(d+\alpha)} \right) W
- \frac{B_0}{C_0} W^2 \geq f(x,W) , $$
which implies the upper bound of \eqref{rough-bounds}.\\
\\
The lower bound follows, after choosing $A_0 > |\lambda_1| + D - \mu_-$ and
$c_0 < |\lambda_1|/\overline{M}$ sufficiently small so that $w(\cdot,0) < u_0$, from the estimate
\begin{align*}
w_t &+ L^\alpha[w] \leq
\frac{|\lambda_1| c_0 e^{-(|\lambda_1|+A_0)t}|x|^{d+\alpha}}{(1+e^{-|\lambda_1| t}|x|^{d+\alpha})^2}
-\frac{A_0 c_0 e^{-A_0 t}}{1+e^{-|\lambda_1| t}|x|^{d+\alpha}}
+\frac{D c_0 e^{-A_0 t} e^{-\alpha |\lambda_1| t/(d+\alpha)}}{1+e^{-|\lambda_1| t}|x|^{d+\alpha}} \\
&= \left( |\lambda_1|-A_0+D e^{-\alpha |\lambda_1| t / (d+\alpha)} \right) w
- \frac{|\lambda_1|}{c_0} e^{A_0 t} w^2 \leq f(x,w) .
\end{align*}
\end{proof}

\section{Improvement Iteration and Proof of Theorem \ref{main-theorem}}
\noindent{}Although \eqref{rough-bounds}  implies the bound 
\begin{equation}
\epsilon \log (c_0) - A_0 t - \epsilon \log (1+e^{-|\lambda_1| t/\epsilon} |x|^{(d+\alpha)/\epsilon}) \leq
v^\epsilon(x,t) \leq
\epsilon \log (C_0) - \epsilon \log (1+e^{-B_0 t/\epsilon} |x|^{(d+\alpha)/\epsilon}), 
\label{rough-v-bounds-hyp}
\end{equation}
this is obviously insufficient to obtain \eqref{homogenized}.
It does, however, yield information about the half-relaxed upper and lower limits $v^*$ and $v_*$.\\
\\
If $|x|^{d+\alpha} \leq e^{B_0 t}$, \eqref{rough-v-bounds-hyp} yields
$v^*(x,t) \leq 0$. However, if $|x|^{d+\alpha} > e^{B_0 t}$, letting
$\epsilon \rightarrow 0$ in \eqref{rough-v-bounds-hyp} shows that $v^*(x,t) \leq B_0 t - (d+\alpha) \log (|x|)$.
A similar analysis extends the lower bounds of \eqref{rough-v-bounds-hyp} to $v_*$. Putting it all together,
we see that \eqref{rough-v-bounds-hyp} yields
\begin{equation}
\min \left(0, |\lambda_1| t - (d+\alpha) \log (|x|) \right)-A_0 t \leq
v_*(x,t) \leq v^*(x,t) \leq \min \left( 0, B_0 t - (d+\alpha) \log (|x|) \right),
\label{induction-base}
\end{equation}
and, letting $t \rightarrow 0$, we obtain
\begin{equation}
v_*(x,0) = v^*(x,0) = \min(0, -(d+\alpha) \log (|x|)) .
\label{initial-data-limit}
\end{equation}
We remark that, in general, the half-relaxed upper and lower  limits of solutions satisfy initial and boundary value
conditions in a relaxed sense; see Barles and Perthame \cite{B-P}, Ishii \cite{Ish},
and \cite{BES}. However, the inequalities in  \eqref{rough-v-bounds-hyp}
hold uniformly, so that we can extract \eqref{initial-data-limit}.\\
\\
We note for future use that, for any fixed $M > 0$, there exists a $C_M>0$ such that, for all $(y,s) \in \RR^d \times [0,\infty),$
\begin{equation}
\frac{C_M^{-1}}{1+e^{-M s}|y|^{d+\alpha}}
\leq \exp \left( \min (0,M s - (d+\alpha) \log |y|) \right)
\leq \frac{C_M}{1+e^{-M s}|y|^{d+\alpha}} .
\label{basic-bound}
\end{equation}
Having established \eqref{induction-base} as a base case, we now describe the following inductive procedure.\\
\\
Let  $\lbrace{A_k \rbrace}_{k \geq 0}$ and
$\lbrace{B_k \rbrace}_{k \geq 0}$ be two monotone decreasing sequences in $(0,\infty)$ with $A_0$ and $B_0$ given by the exponents
seen in \eqref{rough-bounds},  
$$A_k \rightarrow 0 \ \text{and}  \ B_k \rightarrow |\lambda_1|,$$
and, for each $k \geq 0$,
\begin{equation}\label{b-ratio}
\frac{B_k}{B_{k+1}} < 1 + \frac{\alpha}{d}, \ B_{k+1} \geq |\lambda_1| , \ A_k - A_{k+1}
< \frac{|\lambda_1|}{4(d+\alpha)} \ \text{ and} \ A_{k+1} \geq 0.
\end{equation}
We will show that, if $A_k$ and $B_k$ are such that 
\begin{equation}
\min \left(0, |\lambda_1| t - (d+\alpha) \log (|x|) \right)-A_k t \leq
v_*(x,t) \leq v^*(x,t) \leq \min \left( 0, B_k t - (d+\alpha) \log (|x|) \right) ,
\label{rough-v-bounds}
\end{equation}
then the same holds for $A_{k+1}$ and $B_{k+1}$.\\
\\
This is the conclusion of the following two propositions 
that establish appropriate variational inequalities for the half-relaxed upper and lower limits of $v^\epsilon$.
\begin{proposition}
Assume \eqref{b-ratio}, \eqref{rough-v-bounds}, and let $V_k(x,t) := \min(0,B_k t - (d+\alpha) \log (|x|))$.
Then
\begin{equation}
\min ( v^*_t - B_{k+1} ,v^* - V_{k+1} ) \leq 0 \ \ \text{in} \ \  \RR^d \times (0,\infty).
\label{improved-bounds-upper}
\end{equation}
\label{prop-induction-1}
\end{proposition}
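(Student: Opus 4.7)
The plan is to argue by contradiction using the perturbed test function method of \cite{E-S-2}, adapted to the present exponential spatial rescaling. Suppose $v^\ast$ fails the variational inequality at some $(x_0,t_0)\in\RR^d\times(0,\infty)$: there exists a smooth $\phi$ such that $v^\ast-\phi$ attains a strict local maximum at $(x_0,t_0)$ and
\[
\phi_t(x_0,t_0)>B_{k+1}\qquad\text{and}\qquad \phi(x_0,t_0)>V_{k+1}(x_0,t_0).
\]
Since the induction hypothesis \eqref{rough-v-bounds} gives $v^\ast(x_0,t_0)\leq V_k(x_0,t_0)\leq 0$, the second strict inequality forces $V_{k+1}(x_0,t_0)<0$, that is, $|x_0|^{d+\alpha}>e^{B_{k+1}t_0}$, so the relevant branch of $V_{k+1}$ is the logarithmic one. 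Introduce the perturbed test function
\[
\phi^\epsilon(x,t):=\phi(x,t)+\epsilon\bigl[g(\hat x|x|^{1/\epsilon})-g(\hat{x_0}|x_0|^{1/\epsilon})\bigr],
\]
where $g=\log e^g$ with $e^g$ the positive, $1$-periodic principal eigenfunction of $L^\alpha-\mu$ from \eqref{negative-eigenvalue}. Since $g$ is bounded, $\phi^\epsilon\to\phi$ locally uniformly, and the standard argument produces a sequence of local maxima $(x_\epsilon,t_\epsilon)\to(x_0,t_0)$ of $v^\epsilon-\phi^\epsilon$ with $t_\epsilon>0$ for small $\epsilon$.

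Next, I apply the subsolution property of $v^\epsilon$ for \eqref{eq-v-eps} at $(x_\epsilon,t_\epsilon)$, using $\phi^\epsilon$ as the inner test function and $v^\epsilon$ outside (the usual integro-differential convention). Writing $z_\epsilon:=\hat{x_\epsilon}|x_\epsilon|^{1/\epsilon}$, I split the resulting nonlocal integral based on the location of $\eta^\epsilon(x_\epsilon,y)=\hat{(z_\epsilon-y)}|z_\epsilon-y|^{\epsilon}$ in slow coordinates. For $y$ such that $z_\epsilon-y$ remains in the ``outer region'' (i.e.\ $|z_\epsilon-y|^{d+\alpha}\gg e^{B_kt_\epsilon/\epsilon}$), the ansatz $u^\epsilon\sim e^{\phi^\epsilon/\epsilon}$ together with the eigenfunction identity $L^\alpha[e^g](z_\epsilon)/e^{g(z_\epsilon)}=\mu(z_\epsilon)+\lambda_1$ shows, after careful Taylor expansion of $\phi$ in slow coordinates, that this part of the integral contributes $\mu(z_\epsilon)+\lambda_1+o_\epsilon(1)$.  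For $y$ such that $z_\epsilon-y$ visits the ``inner region'' $\{|z_\epsilon-y|\lesssim e^{B_kt_\epsilon/(\epsilon(d+\alpha))}\}$, I use the inductive upper bound together with \eqref{basic-bound} to get $u^\epsilon(\eta^\epsilon,t_\epsilon)\lesssim C$ on a set of fast-scale measure $e^{B_kd\,t_\epsilon/(\epsilon(d+\alpha))}$; combined with $K(z_\epsilon,y)\lesssim |y|^{-(d+\alpha)}\sim |z_\epsilon|^{-(d+\alpha)}$ on this set, the ``gain'' contribution is of order $e^{B_kd\,t_\epsilon/(\epsilon(d+\alpha))}/|z_\epsilon|^{d+\alpha}$.

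Dividing by $u^\epsilon(x_\epsilon,t_\epsilon)\sim e^{\phi(x_0,t_0)/\epsilon}>e^{B_{k+1}t_0/\epsilon}|z_0|^{-(d+\alpha)}$ converts the inner-region gain into a factor
\[
\exp\!\Bigl(\bigl[\tfrac{d}{d+\alpha}B_k-B_{k+1}\bigr]\tfrac{t_0}{\epsilon}\Bigr),
\]
which vanishes as $\epsilon\to 0$ thanks to precisely the ratio constraint $B_k/B_{k+1}<1+\alpha/d$ from \eqref{b-ratio}. Passing to the limit in the subsolution inequality, using $u^\epsilon(x_\epsilon,t_\epsilon)\to 0$ so that $f(z_\epsilon,u^\epsilon)/u^\epsilon\to\mu(z_\epsilon)$, yields
\[
\phi_t(x_0,t_0)+\mu(z_\epsilon)+\lambda_1+o(1)\leq \mu(z_\epsilon)+o(1),
\]
hence $\phi_t(x_0,t_0)\leq|\lambda_1|\leq B_{k+1}$, contradicting $\phi_t(x_0,t_0)>B_{k+1}$.

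The main obstacle is the nonlocal integral analysis, and in particular the ``inner region visit'' estimate: extracting the sharp exponent $B_kd/(d+\alpha)$ requires tracking how far $\eta^\epsilon(x_\epsilon,y)$ can reach into the slow-scale unit ball as $y$ varies, and verifying that the kernel and the inductive upper bound combine to give exactly the integrated mass $e^{B_kd\,t_\epsilon/(\epsilon(d+\alpha))}/|z_\epsilon|^{d+\alpha}$. The condition \eqref{b-ratio} is dictated by this estimate, and reflects the geometric balance between the singular decay of $K$ and the algebraic decay rate $(d+\alpha)\log|x|$ of the test function $V_{k+1}$. A secondary delicate point is the Taylor expansion in slow coordinates: one must use that the directional perturbation of $\eta^\epsilon$ is $O(|y|\,|x|^{1-1/\epsilon})$ and hence negligible for $\epsilon$ small, so that only the radial component of $\nabla\phi$ contributes to the first order, matching cleanly against the eigenfunction identity.
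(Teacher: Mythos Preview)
Your overall strategy---contradiction via the perturbed test function $\phi^\epsilon = \phi + \epsilon g$, and the observation that the ratio condition \eqref{b-ratio} is exactly what makes the ``inner-region visit'' contribution vanish with exponent $\tfrac{d}{d+\alpha}B_k - B_{k+1}$---matches the paper's approach. There is, however, a genuine gap in how you propose to control the nonlocal integral. The inductive hypothesis \eqref{rough-v-bounds} bounds $v^\ast$, not $v^\epsilon$; the half-relaxed limit gives $v^\epsilon \leq v^\ast + \nu$ only on \emph{compact} sets, whereas the integral in \eqref{eq-v-eps} samples all of $\RR^d$. In the Barles--Imbert convention you put $v^\epsilon$ itself in the outer part of the integral, and the only \emph{global} bound on $v^\epsilon$ is the base-case estimate \eqref{rough-v-bounds-hyp} with exponent $B_0$, not $B_k$; since $\tfrac{d}{d+\alpha}B_0 - B_{k+1}$ need not be negative, your key estimate breaks down. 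The paper circumvents this by two modifications you do not make: (i) it replaces $\psi$ by a function $\theta$ satisfying $\theta \leq V_k$ \emph{globally} (see \eqref{theta-ident-1}) and builds $\psi^\epsilon = \theta + \epsilon g + \delta(t_0-r_0-t)$ as an actual super-solution, so that the far integral is estimated via $\theta$ rather than $v^\epsilon$; and (ii) it compares $\psi^\epsilon$ not with $v^\epsilon$ but with a truncation $\bar v^\epsilon$ equal to $v^\epsilon$ on a large compact ball and $\leq \psi$ outside, bounding the resulting nonlocal error $\mathcal J_\epsilon$ with the crude $B_0$ bound (which now suffices, the error being supported where the kernel has tiny mass). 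Your regional decomposition also conflates two different splittings: the eigenfunction identity contributes $\mu+\lambda_1$ from the \emph{full} integral via the algebraic identity \eqref{eigen-intermediate-1-alternate}, not from a geometric region; your ``outer region for $z_\epsilon-y$'' still contains $y$ with $|y|\gg|z_\epsilon|$, where $\eta^\epsilon$ is far from $x_\epsilon$ in slow coordinates and no Taylor expansion of $\phi$ is available.

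A secondary omission: for $\alpha\in[1,2)$ the kernel is too singular at $y=0$ for a first-order expansion, and the paper uses the symmetrized decomposition \eqref{eigen-intermediate-1} with $H_\epsilon(\theta,\bar x,\pm y)$ to obtain the necessary second-order cancellation in $\mathcal I_{\epsilon,\alpha}$; your sketch does not distinguish the two ranges of $\alpha$.
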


\begin{proposition}
Assume \eqref{b-ratio}, \eqref{rough-v-bounds}, and let $W_k(x,t) := \min(0,|\lambda_1| t -
(d+\alpha) \log (|x|)) - A_k t$. Then
\begin{equation}
\max( v_{*,t} - |\lambda_1| + A_{k+1}, v_* - W_{k+1}) \geq 0 \ \ \text{in} \ \  \RR^d \times (0,\infty).
\label{improved-bounds-lower}
\end{equation}
\label{prop-induction-2}
\end{proposition}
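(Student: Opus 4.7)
The plan is to argue by contradiction via the perturbed test function method, using the principal eigenfunction of \eqref{negative-eigenvalue} as corrector. Suppose \eqref{improved-bounds-lower} fails at some $(x_0, t_0) \in \RR^d \times (0, \infty)$: there is a smooth $\phi$ such that $v_* - \phi$ attains a strict local minimum at $(x_0, t_0)$ with $\phi(x_0, t_0) = v_*(x_0, t_0)$, and simultaneously
\begin{equation*}
\phi_t(x_0, t_0) < |\lambda_1| - A_{k+1} \qquad \text{and} \qquad \phi(x_0, t_0) < W_{k+1}(x_0, t_0).
\end{equation*}
Since $W_{k+1} \leq -A_{k+1} t_0 \leq 0$, this forces $v_*(x_0, t_0) < 0$ strictly. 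Let $g$ be the logarithm of the positive, $1$-periodic principal eigenfunction $e^g$ from \eqref{negative-eigenvalue}. Dividing $L^\alpha[e^g] = (\mu + \lambda_1) e^g$ by $e^{g(X)}$ and invoking \eqref{symmetrized-L} gives the \emph{cell-problem identity}
\begin{equation*}
\int \bigl(1 - e^{g(X-y) - g(X)}\bigr) K(X, y)\, dy = \mu(X) + \lambda_1, \qquad X \in \RR^d.
\end{equation*}
Define the perturbed test function $\Phi^\epsilon(y, s) := \phi(y, s) + \epsilon g(\hat{y}|y|^{1/\epsilon})$. Because this is a uniform $O(\epsilon)$ perturbation of $\phi$ and the minimum is strict, half-relaxed-limit arguments (cf.\ \cite{BES}, \cite{E-S-2}) produce a sequence $(x_\epsilon, t_\epsilon) \to (x_0, t_0)$ of local minima of $v^\epsilon - \Phi^\epsilon$.

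At $(x_\epsilon, t_\epsilon)$, invoke the viscosity super-solution property of $v^\epsilon$ in \eqref{eq-v-eps}, using $\Phi^\epsilon$ in a macro-neighborhood of the min and keeping $v^\epsilon$ in the far field (the latter controlled by inductive pointwise bounds on $v^\epsilon$). Setting $X_\epsilon := \hat{x}_\epsilon |x_\epsilon|^{1/\epsilon}$ and observing that $g\bigl(\hat{\eta^\epsilon}|\eta^\epsilon|^{1/\epsilon}\bigr) = g(X_\epsilon - y)$, this reads
\begin{equation*}
\phi_t(x_\epsilon, t_\epsilon) + \int \bigl(1 - e^{[\phi(\eta^\epsilon, t_\epsilon) - \phi(x_\epsilon, t_\epsilon)]/\epsilon}\, e^{g(X_\epsilon - y) - g(X_\epsilon)}\bigr) K(X_\epsilon, y)\, dy \geq \frac{f(X_\epsilon, u^\epsilon(x_\epsilon, t_\epsilon))}{u^\epsilon(x_\epsilon, t_\epsilon)} + o(1).
\end{equation*}
The pivotal estimate is $\phi(\eta^\epsilon, t_\epsilon) - \phi(x_\epsilon, t_\epsilon) = o(\epsilon)$ on the part of $y$-space that dominates the nonlocal integral, so that the macroscopic exponential collapses to $1 + o(1)$ and the cell-problem identity reduces the integral to $\mu(X_\epsilon) + \lambda_1 + o(1)$. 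On the right-hand side, $v^\epsilon(x_\epsilon, t_\epsilon) \to v_*(x_0, t_0) < 0$ forces $u^\epsilon(x_\epsilon, t_\epsilon) \to 0$, and \eqref{f-KPP} gives $f(X_\epsilon, u^\epsilon)/u^\epsilon = \mu(X_\epsilon) + O(u^\epsilon) = \mu(X_\epsilon) + o(1)$. Cancelling $\mu(X_\epsilon)$ and sending $\epsilon \to 0$ yields $\phi_t(x_0, t_0) \geq -\lambda_1 = |\lambda_1|$, contradicting $\phi_t(x_0, t_0) < |\lambda_1| - A_{k+1}$.

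The hard part will be the rigorous justification of the cell-problem reduction. A Taylor expansion of $|\eta^\epsilon| = |X_\epsilon - y|^\epsilon$ yields $|\eta^\epsilon - x_\epsilon| = O(\epsilon |x_\epsilon|/|X_\epsilon|)$ for bounded $y$: this vanishes super-polynomially when $|x_0| > 1$ (so $|X_\epsilon| = |x_\epsilon|^{1/\epsilon} \to \infty$), but is \emph{not} small when $|x_0| < 1$ (where $|X_\epsilon| \to 0$ and $\eta^\epsilon$ drifts toward the unit sphere rather than to $x_\epsilon$). The interior regime therefore demands a separate argument, in which the nonlocal integral is re-partitioned at the micro-scale and the cell-problem cancellation is carried out in the $X$-variable rather than the $x$-variable. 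The far-field tail $\int_{|y| \gg 1}\cdots$ is controlled via the inductive upper bound $v^* \leq V_k$ together with \eqref{basic-bound}, ensuring $o(1)$ decay; the integrable singularity of $K$ at $y = 0$ is handled by the symmetrization \eqref{symmetrized-L} and truncation. The slack $A_k - A_{k+1} < |\lambda_1|/(4(d+\alpha))$ permitted by \eqref{b-ratio} is precisely calibrated to absorb all these $\epsilon$-uniform approximation errors at each induction step, so that the tightened lower bound propagates.
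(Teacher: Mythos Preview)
Your overall strategy for $|x_0|>1$ matches the paper's: argue by contradiction, use $\Phi^\epsilon=\phi+\epsilon g(\hat x|x|^{1/\epsilon})$ as the perturbed test function, reduce the nonlocal term to the cell identity $L^\alpha[e^g]/e^g=\mu+\lambda_1$, and exploit $v_*(x_0,t_0)<0$ to kill the reaction error. The paper carries this out with somewhat more care than your sketch (it modifies $\phi$ to a global $\theta$ satisfying the inductive bounds, and replaces $v^\epsilon$ by a truncation $\bar v^\epsilon$ that agrees with $v^\epsilon$ on a large compact set because the inductive hypotheses \eqref{rough-v-bounds} constrain only $v_*,v^*$, not $v^\epsilon$ itself), but this is essentially what you outline.

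The genuine gap is your treatment of the interior region $|x_0|\le 1$. You correctly observe that for $|x_0|<1$ one has $|X_\epsilon|\to 0$ and $\eta^\epsilon(x_\epsilon,y)\to-\hat y$ for bounded $y$, so $\phi(\eta^\epsilon,t_\epsilon)-\phi(x_\epsilon,t_\epsilon)$ does \emph{not} vanish and the macroscopic exponential cannot collapse to $1+o(1)$. Your proposed remedy, to ``re-partition at the micro-scale and carry out the cell-problem cancellation in the $X$-variable,'' does not work: the rescaling $x\mapsto \hat x|x|^{1/\epsilon}$ collapses the entire open unit ball toward the origin, so there is no useful separation of scales available there, and no amount of repartitioning recovers a perturbed-test-function argument. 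The paper explicitly flags this obstruction and abandons the test-function method for $|x|\le 1$ altogether. Instead it first establishes \eqref{improved-bounds-lower} on $\{|x|>1\}$, applies Lemma~\ref{lem-induction-complete} to get $v_*\ge W_{k+1}$ there, translates this back to a lower bound on $u$ in \emph{unscaled} coordinates on an annulus $\{1\le|y|\le R^{1/\epsilon}\}$, and then uses a nonlocal maximum principle for \eqref{main-eq} directly (Lemma~\ref{lem-inner-lower-bound}): if $u$ were too small at an interior minimum, the nonlocal diffusion, fed by the already-established exterior lower bound, would force $u_t>0$ there, pushing the minimum back to earlier times and eventually contradicting \eqref{rough-v-bounds}. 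This argument is structurally different from anything in your proposal, and without it the proof for $\overline B_1\times(0,\infty)$ is missing.
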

\noindent{}The proof of Theorem \ref{main-theorem} is then completed with an argument by
induction and the following technical lemma, which is an immediate consequence of the
definition of viscosity solution.
\begin{lemma}
Let $U \subset \RR^d$ be open  and $F, \ G:U \times [0,\infty) \rightarrow \RR$ be respectively upper- and lower-semicontinuous.
Assume that, for a fixed $C>0$ and all $(x,t) \in U \times [0,\infty)$,  $F(x,t) \leq Ct$ and $G(x,t) \geq -Ct$, and,
$$\min(F_t,F) \leq 0 \ \ \text{and} \ \  \max(G_t,G) \geq 0 \ \ \text{in the viscosity sense.}$$
Then
$$F \leq 0 \ \text{ and } \ G \geq 0 \ \ \text{on} \ \ U \times [0,\infty). $$
\label{lem-induction-complete}
\end{lemma}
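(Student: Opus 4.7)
The plan is to argue by contradiction using the standard parabolic penalization from viscosity solution theory, and to deduce the statement about $G$ from that about $F$ by replacing $G$ with $-G$: since $\max(G_t, G) \geq 0$ is equivalent to $\min((-G)_t, -G) \leq 0$ and $G \geq -Ct$ rewrites as $-G \leq Ct$, the hypotheses on $F$ are satisfied by $-G$. Hence I focus only on the claim $F \leq 0$.

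The first observation is that $F \leq Ct$ already forces $F(\cdot, 0) \leq 0$, so any putative counterexample has the form $F(x_0, t_0) > 0$ with $t_0 > 0$ and $x_0 \in U$. Fixing such $(x_0, t_0)$ and a small ball $\overline{B_r(x_0)} \subset U$, I would introduce the test function
\[
\phi(x, t) := \delta t + \epsilon |x - x_0|^2 + \frac{\eta}{T - t}, \qquad T := 2 t_0,
\]
with $\delta, \eta > 0$ small and $\epsilon > 0$ large. The heuristic is that $\delta t$ pins the maximum of $F - \phi$ away from $t = 0$, $\eta/(T - t)$ pins it away from $t = T$, and $\epsilon |x - x_0|^2$, together with the at-most-linear-in-$t$ bound $F \leq Ct$, pins it away from $\partial B_r(x_0)$. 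A short routine calculation then chooses $\delta, \eta$ so small and $\epsilon$ so large that $(F - \phi)(x_0, t_0) > 0$ while $(F - \phi) \leq 0$ on the parabolic boundary of $B_r(x_0) \times [0, T)$. Upper-semicontinuity of $F$ then guarantees that the supremum of $F - \phi$ is attained at some interior point $(\bar x, \bar t) \in B_r(x_0) \times (0, T)$ with $F(\bar x, \bar t) > \phi(\bar x, \bar t) \geq 0$.

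Applying the viscosity subsolution inequality $\min(F_t, F) \leq 0$ with $\phi$ at the interior maximum $(\bar x, \bar t)$ of $F - \phi$ then yields $\min(\phi_t(\bar x, \bar t), F(\bar x, \bar t)) \leq 0$. However $\phi_t(\bar x, \bar t) = \delta + \eta/(T - \bar t)^2 > 0$ and $F(\bar x, \bar t) > 0$, so the minimum is strictly positive, the desired contradiction.

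The only step requiring care, and hence the main obstacle, is the parameter tuning needed to localize the extremum strictly inside the cylinder so that the viscosity inequality is applicable; this is where all three barrier terms in $\phi$ are used. Once this is set up, the argument invokes no property of $L^\alpha$, of $f$, or of any specific equation: the relation $\min(u_t, u) = 0$ carries no spatial derivatives, so the proof works abstractly for USC/LSC functions on an arbitrary open $U \subset \RR^d$ under the stated growth bounds, which is precisely why the lemma is formulated in this abstract form.
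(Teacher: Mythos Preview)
Your argument is correct and is the standard penalization proof. The paper itself does not give a proof of this lemma at all: it simply states that the result ``is an immediate consequence of the definition of viscosity solution'' and moves on. Your write-up is exactly the routine verification the paper is alluding to, so there is nothing to compare---your approach \emph{is} the intended one, just spelled out in full.

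One minor remark on presentation: the paper's Definition~A.1 is phrased in terms of a \emph{global} maximum of $F-\psi$ with value zero, whereas your test function $\phi$ only produces a local maximum on the cylinder $B_r(x_0)\times(0,T)$, and $\phi$ is singular at $t=T$. This is of course a non-issue---since the variational inequality $\min(F_t,F)\le 0$ contains no nonlocal or spatial terms, one can freely modify $\phi$ outside a neighborhood of $(\bar x,\bar t)$ and subtract the constant $(F-\phi)(\bar x,\bar t)$ to upgrade to a global maximum of value zero without changing $\phi_t(\bar x,\bar t)$. You may want to add one line to this effect so the argument matches the paper's stated definition verbatim.
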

\begin{proof}[The proof of Theorem~\ref{main-theorem}]
It follows from Lemma \ref{lem-induction-complete} that, for
all $(x,t) \in \RR^d \times [0,\infty)$,
$$ \min \left(0, |\lambda_1| t - (d+\alpha) \log (|x|) \right)-A_{k+1} t \leq
v_*(x,t) \leq v^*(x,t) \leq \min \left( 0, B_{k+1} t - (d+\alpha) \log (|x|) \right).$$
Since $A_k \rightarrow 0$ and $B_k \rightarrow |\lambda_1|$,
we conclude that for all $(x,t)\in \RR^d\times [0,\infty)$
$$ v_*(x,t) = v^*(x,t) = \min(0,|\lambda_1|t-(d+\alpha)\log(|x|)).$$
This in fact shows that $v^\epsilon$ has a locally uniform limit $v$, which is given by \eqref{homogenized},
and clearly satisfies \eqref{homogenized-equation}.
\end{proof}

\section{Proof of Proposition \ref{prop-induction-1}}
\noindent{}Since the proof is long and technical, we begin with a general outline.
Let $\psi$ be a smooth test function that touches $v^*$ from above
at a point $(x_0,t_0) \in \RR^d\times (0,\infty)$ and,  arguing by contradiction, we assume that
\begin{equation}
v^*(x_0,t_0) > V_{k+1}(x_0,t_0) = \min(0, B_{k+1} t_0 - (d+\alpha) \log (|x_0|)) ,
\label{b-bound}
\end{equation}
and
\begin{equation}
\psi_t (x_0,t_0) > B_{k+1} .
\label{subsolution-v-upper-not}
\end{equation}
Then we modify  $\psi$ into a new function $\psi^\epsilon$ that converges
uniformly as $\epsilon \to 0$ to $\psi$ in a neighborhood of $(x_0,t_0)$, and we show that,
for all $\epsilon$ sufficiently small,
each $\psi^\epsilon$ is a super-solution to \eqref{eq-v-eps}
in a neighborhood of $(x_0,t_0)$ which is independent of  $\epsilon$.
Using $v^\epsilon$ as a test function will then show that $v^\epsilon$ must remain a fixed
distance away from $\psi^\epsilon$ in the neighborhood of $(x_0,t_0)$. Therefore
$\psi(x_0,t_0)$ is strictly larger than $v^*(x_0,t_0)$, which contradicts the assumption.
\\ \\
Although the above gives an accurate summary of the scheme of the proof, for the actual argument
we need to make a few technical modifications, which, however, do not change the overall strategy of the proof.
Firstly, it is necessary to change $\psi$ outside of a neighborhood of $(x_0,t_0)$.
Secondly, we will not be able to test $\psi^\epsilon$
against $v^\epsilon$ directly, since we cannot assume that $v^\epsilon \leq v^*$.
Instead, we must modify $v^\epsilon$ outside of a large but fixed compact set $K$, and then
appeal to the locally uniform subconvergence of $v^\epsilon$ to $v^*$; see \eqref{loc-unif-limits}.
Since \eqref{eq-v-eps} is nonlocal, this will create some
error terms  that must be bounded.\\
\\ 
In proving that $\psi^\epsilon$ is a super-solution,  we will  need a bound on the ratio
between $\psi^\epsilon$ at an ``arbitrary'' point and $\psi^\epsilon$ near $(x_0,t_0)$. This is used
to estimate the size of the nonlocal diffusion term, and employs 
\eqref{eigen-intermediate-1} and \eqref{eigen-intermediate-1-alternate}. Then
\eqref{b-bound} acts as an impromptu (local) lower bound
for $\psi$ at $(x_0,t_0)$. The lower bound will then hold for $\psi^\epsilon$ on a sufficiently small
neighborhood, depending on $\psi$ but not on $\epsilon$. Combined
with the global upper bounds of \eqref{rough-v-bounds}, the effect of the nonlocal diffusion is seen
to decay to zero as $\epsilon \rightarrow 0$,
provided \eqref{b-ratio} holds for $B_{k+1}$ and $B_k$.\\
\\
Moreover, the estimates for the nonlocal diffusion will require two slightly different approaches
depending on the value of $\alpha$. To that end, we first prove a useful decomposition
for the operator $a \rightarrow L^\alpha[a]/a$.\\
\\
In view of \eqref{f-KPP},  if $e^g$ is the first eigenfunction for the linearized operator $L^\alpha - \mu$,
then $ L^\alpha[e^g]/e^{g} = -|\lambda_1| + \mu$.\\
\\
 Given  $a_0: \RR^d \times [0,\infty) \rightarrow \RR$,
let $ a(x,t) = a_0(x,t) + \epsilon g (\hat{x}|x|^{1/\epsilon})$. The decomposition depends on whether $\alpha  \in (0,1)$ or $\alpha \in [1,2)$.\\
\\
In the former case, recalling   
\eqref{eta-def}, we  find
\begin{align*}
\frac{L^\alpha[e^a]}{e^a} &= \int \left(1-\frac{\exp(a(\eta^\epsilon(x,y),t)/\epsilon)}
{\exp(a/\epsilon)} \right) K dy \nonumber \\
&= \frac{L^\alpha[e^g]}{e^g} + \int \left( \frac{\exp(g(\hat{x}|x|^{1/\epsilon}-y))}{\exp(g(\hat{x}|x|^{1/\epsilon}))}
- \frac{\exp(a(\eta^\epsilon(x,y),t)/\epsilon)}{\exp(a/\epsilon)} \right) Kdy, \\
\end{align*}
and, hence,
\begin{equation}\label{eigen-intermediate-1-alternate}
\frac{L^\alpha[e^a]}{e^a}= = \frac{L^\alpha[e^g]}{e^{g}}
+ \int G_\epsilon(x,y) \left(
1 - \frac{\exp(a_0(\eta^\epsilon(x,y),t)/\epsilon)}{\exp(a_0/\epsilon)}
\right) K dy,
\end{equation}
\\
where
\begin{equation}
G_\epsilon(x,y) := \exp \left( g(\hat{x}|x|^{1/\epsilon} - y) -
g(\hat{x}|x|^{1/\epsilon}) \right)
\label{eigen-intermediate-3}
\end{equation}
is bounded above and below by positive constants
uniformly in $\epsilon$.\\
\\
When $\alpha \in [1,2)$, we
need a stronger cancellation at $y=0$ to handle the potential singularity of $K$. 
It follows from \eqref{symmetrized-L} that, for any $R>0$, 
\begin{align*} 
\frac{L^\alpha[e^a]}{e^a} &=
\int_{|y|<R} \left(1 -
\frac{\exp (a(\eta^\epsilon(x,y),t)/\epsilon) +
\exp (a(\eta^\epsilon(x,-y),t)/\epsilon)}{2 \exp (a/\epsilon)}
\right) K dy \nonumber \\
& \hspace{2cm} +\int_{|y|\geq R} \left( 1-
\frac{\exp(a(\eta^\epsilon(x,y),t)/\epsilon)}{\exp(a/\epsilon)}
\right) K dy \nonumber \\
\end{align*}
and, hence,
\begin{equation} \label{eigen-intermediate-1}
\frac{L^\alpha[e^a]}{e^a}=\begin{cases}
 \frac{L^\alpha[e^g]}{e^{g}}
+ \frac{1}{2} \int_{|y|<R} \left( H_\epsilon(a_0,x,y)
 + H_\epsilon(a_0,x,-y) \right) K dy  \\[2.5mm]
+\int_{|y| \geq R} G_\epsilon(x,y) \left(
1 - \frac{\exp(a_0(\eta^\epsilon(x,y),t)/\epsilon)}{\exp(a_0/\epsilon)}
\right) K dy,
\end{cases}
\end{equation}
where
\begin{equation}
H_\epsilon(a_0,x,y) :=
G_\epsilon(x,y)
\left(1 - \frac{\exp(a_0(\eta^\epsilon(x,y),t)/\epsilon)}{\exp(a_0/\epsilon)} \right) .
\label{eigen-intermediate-2}
\end{equation}
\\
\begin{proof}[The proof of  Proposition \ref{prop-induction-1}]
We assume that \eqref{b-ratio} and \eqref{rough-v-bounds}
hold for $v^*$ and the relevant constants and observe that \eqref{rough-v-bounds-hyp} 
implies $v^* \leq 0$ independently of
Proposition \ref{prop-induction-1}, that is, \eqref{improved-bounds-upper} trivially holds
when $|x| \leq 1$. As such, we only consider points $(x,t)$ with
$|x|>1$ and $t>0$.\\
\\
Let $\psi$ be a smooth function such that
$v^* - \psi$ assumes a global maximum value of $0$ at $(x_0,t_0)$ and
\begin{equation}
\psi(x_0,t_0) = v^*(x_0,t_0) > \min(0,B_{k+1} t_0 -(d+\alpha) \log(|x_0|))
\label{first-condition-on-psi}
\end{equation}
and, for some $\sigma > 0$,
$$ \psi_t(x_0,t_0) -B_{k+1} = \sigma > 0 . $$
Let $r_0 = \min(t_0, |x_0|-1)/3$ and consider the cylinder
$$ D_{r_0}(x_0,t_0) := B_{r_0}(x_0) \times (t_0 - r_0, t_0 + r_0)
\subset \lbrace{(x,t): \  |x|>1 , \ t>0 \rbrace} . $$
We modify $\psi$ outside a fixed neighborhood of $(x_0,t_0)$. Without loss of
generality, in light of \eqref{rough-v-bounds}, we may assume that
\begin{equation}
\psi(x,t) \leq \min(0,B_k t - (d+\alpha) \log(|x|)) .
\label{psi-is-good-upper-bound}
\end{equation}
Let $\delta>0$ be such that
\begin{equation}
\frac{B_k d}{d+\alpha} + \delta < B_{k+1} , \ e^{4 \delta r_0} < (|x_0|-r_0)^{\alpha/2},
\ \text{ and } \ \delta < \frac{\sigma}{3} .
\label{delta}
\end{equation}
Notice that, if $\delta$ and $r_0$ satisfy \eqref{delta}, then so do any $\delta' < \delta$ and $r_0' < r_0$.
We can therefore reduce the values of $\delta$ and $r_0$ without violating the above assumptions.\\
\\
Taking $\delta$ and $r_0$ smaller, depending on $\psi$ and $\sigma$ but not on $\epsilon$, we
construct a smooth  function $\theta$ such that, for some $C>0$ depending on $r_0$, $\psi$, $B_{k+1}$, and $B_k$ but not on $\epsilon$,
\begin{align}
\left\{
\begin{array}{ll}
\medskip
(i) \ & \theta = \psi \ \text{ on } \ D_{r_0/2}(x_0,t_0) , \\
\medskip
(ii) \ & \theta = \psi+\max(0,\delta(t-t_0+2r_0))
\ \text{ on } \ D_{r_0}(x_0,t_0)^C , \\
\medskip
(iii) \ & \theta \leq \psi+\max(0,\delta(t-t_0+2r_0))
\ \text{ everywhere } , \\
\medskip
(iv) \ & \theta \geq \min(0,B_{k+1} t - (d+\alpha) \log (|x|)) - \delta r_0 \ \text{ on }
\ D_{r_0}(x_0,t_0) , \\
\medskip
(v) \ & \theta_t - B_{k+1} > 2 \sigma / 3 \ \text{ on } \
D_{r_0}(x_0,t_0),\\
(vi) \ & \| D \theta \|  \  \text{and}  \ \| D^2 \theta \|< C. 
\end{array}
\right.
\label{theta-ident-1}
\end{align}
In light of \eqref{b-bound} and the regularity of $\psi$,
such a $\theta$ always exists; note that to satisfy \eqref{theta-ident-1}$(iv)$ and
$(v)$ it may be necessary to reduce the size of $r_0$ from its initial value,
but this only depends on $\psi$ and not on $\epsilon$.\\
\\
Next we show that 
$$ \psi^\epsilon(x,t) := \theta(x,t) +\epsilon g(\hat{x}|x|^{1/\epsilon}) +
\delta (t_0-r_0-t), $$
is a super-solution to \eqref{main-eq} in $D_{r_0}(x_0,t_0)$.\\
\\
Let $\phi$ be a smooth test function such that $\psi^\epsilon
- \phi$ assumes a minimum value of $0$ at $(\bar{x},\bar{t}) \in D_{r_0}(x_0,t_0)$;
recall  that $|\bar{x}| > 1$. Since $\phi = \psi^\epsilon$ at $(\bar{x},\bar{t})$, for all $y \in \RR^d$, we have 
$$ \frac{\exp(\phi(\eta^\epsilon(\bar{x},y), \bar{t})/\epsilon)}{\exp(\phi(\bar{x},\bar{t})/\epsilon)} =
\frac{\exp(\psi^\epsilon(\eta^\epsilon(\bar{x},y), \bar{t})/\epsilon)}{\exp(\psi^\epsilon(\bar{x},\bar{t})/\epsilon)}
+ \frac{\exp(\phi(\eta^\epsilon(\bar{x},y),\bar{t})/\epsilon) - 
\exp(\psi_\epsilon(\eta^\epsilon(\bar{x},y),\bar{t})/\epsilon)}{\exp(\phi(\bar{x},\bar{t})/\epsilon)} , $$
and, hence,
\begin{equation}
1 - \frac{\exp(\phi(\eta^\epsilon(\bar{x},y), \bar{t})/\epsilon)}{\exp(\phi(\bar{x},\bar{t})/\epsilon)}
\geq 1 - \frac{\exp(\psi^\epsilon(\eta^\epsilon(\bar{x},y), \bar{t})/\epsilon)}
{\exp(\psi^\epsilon(\bar{x},\bar{t})/\epsilon)} .
\label{supersol-pre-1}
\end{equation}
Moreover, it follows from \eqref{theta-ident-1}$(v)$ that
\begin{equation}
\phi_t (\bar{x}, \bar{t}) \geq \psi_t^\epsilon(\bar{x}, \bar{t}) =
\theta_t(\bar{x}, \bar{t}) - \delta >
\psi_t(\bar{x}, \bar{t}) - \sigma/3 > B_{k+1}+\sigma/3 .
\label{supersol-pre-2}
\end{equation}
Using \eqref{supersol-pre-1} and \eqref{supersol-pre-2}, for any $R > 0$, we find that, at
$(\bar{x},\bar{t})$,
\begin{equation}\label{supersol-main}
\begin{cases}
& \phi_t +\int_{B_R(\bar{x})} \left(1- \frac{\exp(\phi(\eta^\epsilon
(\bar{x},y),\bar{t})/\epsilon)}{\exp(\phi/\epsilon)} \right) K dy
+\int_{B_R(\bar{x})^c} \left(1- \frac{\exp(\psi^\epsilon(\eta^\epsilon(\bar{x},y),\bar{t})
/\epsilon)}{\exp(\psi^\epsilon/\epsilon)} \right) K dy \\
& \hspace{4mm} \geq \psi_t-\frac{\sigma}{3}+ \int \left(1- \frac{\exp(\psi^\epsilon
(\eta^\epsilon(\bar{x},y),\bar{t})/\epsilon)}{\exp(\psi^\epsilon/\epsilon)}
\right) K dy > B_{k+1} +\frac{\sigma}{3}+ \frac{L^\alpha[e^{\psi^\epsilon}]}
{e^{\psi^\epsilon}} .
\end{cases}
\end{equation}
Next we employ  \eqref{eigen-intermediate-1-alternate} and \eqref{eigen-intermediate-1}
with $\psi^\epsilon$ in place of $a$ to get, for any $r = r(\epsilon) > 0$,
\begin{equation}
\frac{L^\alpha[e^{\psi^\epsilon}]}{e^{\psi^\epsilon}} =
-|\lambda_1|+\mu(\hat{\bar{x}}|\bar{x}|^{1/\epsilon}) +
\mathcal{I}_{\epsilon,\alpha}
+\int_{|y| \geq r} G_\epsilon(\bar{x},y) \left(1- \frac{\exp(\theta(\eta^\epsilon(\bar{x},y),
\bar{t})/\epsilon)}{\exp(\theta/\epsilon)} \right) K dy \
\label{supersol-rhs}
\end{equation}
where
\begin{align*}
\mathcal{I}_{\epsilon,\alpha} := \left\{
\begin{array}{ll}
\medskip
\int_{|y| < r} G_\epsilon(\bar{x},y) \left(1- \frac{\exp(\theta(\eta^\epsilon(\bar{x},y),
\bar{t})/\epsilon)}{\exp(\theta(\bar{x},\bar{t})/\epsilon)} \right) K dy \  & \text{ if } \ \alpha < 1 , \\
\medskip
\frac{1}{2} \int_{|y| < r} (H_\epsilon(\theta,\bar{x},y)+H_\epsilon(\theta,\bar{x},-y)) K dy
& \text{ if } \ \alpha \geq 1 .
\end{array}
\right.
\end{align*}
Take $r(\epsilon) = \epsilon |\bar{x}|^{1/\epsilon} $ and recall \eqref{eigen-intermediate-2}.
It follows from \eqref{theta-ident-1}$(iv)$ that
$$\theta(\bar{x},\bar{t}) \geq \min (0, B_{k+1} \bar{t} - (d+\alpha) \log (|\bar{x}|))-\delta r_0 . $$
For the rest of the proof, let 
$$ h(y) = \frac{\exp(\theta(\eta^\epsilon(\bar{x},y),\bar{t})/\epsilon)}
{\exp(\theta(\bar{x},\bar{t})/\epsilon)} . $$
Since \eqref{theta-ident-1}$(iii)$ and \eqref{psi-is-good-upper-bound} hold for all $(x,t)$ and $\bar{t} > t_0-r_0$,
we note that \eqref{basic-bound} implies that, for all $y \in \RR^d$,
\begin{equation}\label{h-bound}
\begin{cases}
h(y) &\leq \displaystyle \frac{\exp (\min (0,B_k \bar{t}/\epsilon -(d+\alpha) \log (|\hat{\bar{x}}
|\bar{x}|^{1/\epsilon} - y|) + \delta(\bar{t}-t_0+2r_0)/\epsilon)}
{\exp (\min (0, B_{k+1} \bar{t}/\epsilon-(d+\alpha) \log (|\bar{x}|)/\epsilon) -
\delta r_0 / \epsilon )}\\[3mm]
 & \displaystyle\lesssim
\frac{1 + e^{-B_{k+1} \bar{t}/\epsilon} |\bar{x}|^{(d+\alpha)/\epsilon}}
{1+ e^{-B_k \bar{t}/\epsilon} |\hat{\bar{x}} |\bar{x}|^{1/\epsilon}-y|^{d+\alpha}}
e^{\delta (\bar{t}-t_0+3 r_0)/\epsilon}.
\end{cases}
\end{equation}
and  proceed to estimate the last term of \eqref{supersol-rhs}.\\
\\
Recalling \eqref{delta}, and the facts that  $G_\epsilon$ is bounded uniformly in $\epsilon$ and $\bar{t} < t_0+r_0$ and $|\bar{x}| > |x_0|-r_0$,
we observe that, if $e^{B_{k+1} \bar{t}} \geq |\bar{x}|^{d+\alpha}$, then $h(y) < 2 e^{4 \delta r_0/ \epsilon}$ and therefore
$$ \int_{|y| \geq r} G_\epsilon(\bar{x},y)(1-h(y)) Kdy \gtrsim -r^{-\alpha}
e^{4 \delta r_0 / \epsilon} \gtrsim -\epsilon^{-\alpha}|\bar{x}|^{-\alpha/(2 \epsilon)} . $$
Otherwise, if $e^{B_{k+1}\bar{t}} < |\bar{x}|^{d+\alpha}$, we use the change of variables
$y = |\bar{x}|^{1/\epsilon} z$ to get
\begin{align}
\int_{|y| \geq r} & G_\epsilon(\bar{x},y)(1-h(y)) Kdy \gtrsim -e^{-B_{k+1} \bar{t}/\epsilon}
|\bar{x}|^{(d+\alpha)/\epsilon} \int_{|y| \geq r} \frac{e^{\delta(\bar{t}-t_0+3r_0)/\epsilon}}{1+e^{-B_k \bar{t}/\epsilon}
|\hat{\bar{x}}|\bar{x}|^{1/\epsilon} -y|^{d+\alpha}} \frac{dy}{|y|^{d+\alpha}} \nonumber \\
&= -e^{-B_{k+1} \bar{t}/\epsilon} |\bar{x}|^{d/\epsilon}
\int_{|z| \geq \epsilon} \frac{e^{\delta(\bar{t}-t_0+3r_0)/\epsilon}}{1+ e^{-B_k \bar{t}/\epsilon}
|\bar{x}|^{(d+\alpha)/\epsilon}|\hat{\bar{x}}-z|^{d+\alpha}} \frac{dz}{|z|^{d+\alpha}}
\nonumber \\
& \geq -\epsilon^{-d-\alpha} e^{-B_{k+1} \bar{t}/\epsilon} |\bar{x}|^{d/\epsilon}
\int \frac{e^{\delta(\bar{t}-t_0+3r_0)/\epsilon} dw}{1+e^{-B_k \bar{t}/\epsilon}
|\bar{x}|^{(d+\alpha)/\epsilon}|w|^{d+\alpha}} \nonumber \\
&\gtrsim
-\epsilon^{-d-\alpha} \exp \left( \left(\frac{B_k d}{d+\alpha}-B_{k+1} \right) \frac{\bar{t}}{\epsilon}+
\delta \frac{\bar{t}-t_0+3r_0}{\epsilon} \right) .
\nonumber
\end{align}
For the above inequalities, the first step used \eqref{takis4}, the second employed H\"{o}lder's inequality,
and in the third we made the implicit substitution 
$$w = (e^{B_k \bar{t} / \epsilon}
|\bar{x}|^{(d+\alpha)/\epsilon} )^{-1/(d+\alpha)} \tilde{w}.$$
Notice that \eqref{delta} and the fact that $3r_0 \leq t_0$ imply the exponent is negative. Therefore,
in both cases, the last term of \eqref{supersol-rhs} becomes nonnegative as
$\epsilon \rightarrow 0$.\\
\\
It remains to show that \begin{equation} \label{takis40} \lim_{\epsilon \to 0} |\mathcal{I}_{\epsilon,\alpha}|=0, \end{equation} and for this we need to consider two cases depending on whether $\alpha \in [1,2)$ or $\alpha \in (0,1)$.\\
\\
For the former we must estimate the function $h$ and its derivatives near $y = 0$.
To that end, observe the facts that
$$ |\partial_i G_\epsilon(\bar{x},y)| = |G_\epsilon(\bar{x},y) \partial_i
g (\hat{\bar{x}} |\bar{x}|^{1/\epsilon} - y)|
\leq \| G_\epsilon(\bar{x},\cdot) \| \| D g \|, $$
and, if $|y| \leq r \leq |\bar{x}|^{1/\epsilon} / 2$, then 
$$ \frac{1}{2} |\bar{x}|^{1/\epsilon} \leq |\hat{\bar{x}}|\bar{x}|^{1/\epsilon}-y|
\leq \frac{3}{2} |\bar{x}|^{1/\epsilon}.$$
Next we need to compute a number of derivatives that enter in the argument below. Recall that, 
for $1 \leq j \leq d$, 
$$ \eta^\epsilon_j(x,y) = (\hat{x}_j |x|^{1/\epsilon} - y_j) |\hat{x}
|x|^{1/\epsilon} - y |^{\epsilon-1}. $$
Then
$$ \partial_i \eta^\epsilon_j(\bar{x},y) = -\delta_{ij} |\hat{\bar{x}}|\bar{x}|^{1/\epsilon}
-y|^{\epsilon-1} + \eta^\epsilon_j(\bar{x},y) (1-\epsilon)
(\hat{\bar{x}}_i|\bar{x}|^{1/\epsilon}-y_i)|\hat{\bar{x}}|\bar{x}|^{1/\epsilon}-y|^{-2} , $$
with
$$ |\partial_i \eta^\epsilon_j (\bar{x},y)| \leq 4|\bar{x}|^{1-1/\epsilon} , $$
and
\begin{align}
\partial_{ik}^2 \eta^\epsilon_j(\bar{x},y) &= \delta_{ij}(1-\epsilon) \eta^\epsilon_k(\bar{x},y)
|\hat{\bar{x}}|\bar{x}|^{1/\epsilon}-y|^{-2} +(1-\epsilon) \partial_k \eta^\epsilon_j(\bar{x},y)
(\hat{\bar{x}}_i|\bar{x}|^{1/\epsilon}-y_i)|\hat{\bar{x}}|\bar{x}|^{1/\epsilon}-y|^{-2}
\nonumber \\
&-\delta_{ik}(1-\epsilon) \eta^\epsilon_j(\bar{x},y) |\hat{\bar{x}}|\bar{x}|^{1/\epsilon}-y|^{-2}
-2 \eta^\epsilon_j(\bar{x},y) (1-\epsilon)\frac{(\hat{\bar{x}}_i|\bar{x}|^{1/\epsilon}-y_i)
(\hat{\bar{x}}_k|\bar{x}|^{1/\epsilon}-y_k)}{|\hat{\bar{x}}|\bar{x}|^{1/\epsilon}-y|^{4}}
, \nonumber
\end{align}
with
$$ |\partial_{ik}^2 \eta^\epsilon_j(\bar{x},y)| \leq 24 |\bar{x}|^{1 - 2/\epsilon} . $$
It follows that
\begin{align}
\partial_i h(y) &= \frac{1}{\epsilon} h(y) D \theta (\eta^\epsilon(\bar{x},y),\bar{t})
\cdot \partial_i \eta^\epsilon (\bar{x},y) \nonumber \\
&= h(y) \left( -\frac{1}{\epsilon} \frac{\partial_i \theta(\eta^\epsilon(\bar{x},y),\bar{t})}
{|\hat{\bar{x}}|\bar{x}|^{1/\epsilon} - y|^{1-\epsilon}} + \left(\frac{1}{\epsilon}-1 \right)
D \theta(\eta^\epsilon(\bar{x},y),\bar{t}) \cdot \eta^\epsilon(\bar{x},y)
\frac{\hat{\bar{x}}_i|\bar{x}|^{1/\epsilon}-y_i}
{|\hat{\bar{x}}|\bar{x}|^{1/\epsilon}-y|^2} \right), \nonumber
\end{align}
and, for $|y| \leq r$,
\begin{align}
| \partial_{ik}^2 h(y)| &\lesssim \frac{ \sup_{|z|<r} h(z) }{\epsilon} \left( \frac{1}{\epsilon}
\| D \theta \|^2 \| D \eta^\epsilon \|^2 +
\| D^2 \theta \|_{L^\infty} \| D \eta^\epsilon \|^2 +
\| D \theta \| \| D^2 \eta^\epsilon \|\right)
\nonumber \\
&\lesssim \left( \sup_{|z|<r} h(z) \right) |\bar{x}|^{-2/\epsilon} \left( \frac{|\bar{x}|^2}{\epsilon^2}
+ \frac{|\bar{x}|}{\epsilon} \right) .
\nonumber
\end{align}
Combining all the above and using \eqref{theta-ident-1}(vi), we get
$$ G_\epsilon(\bar{x}, \pm y) = 1 + O(|y|) , $$
and
$$ h(\pm y) = 1 \pm D h(0) \cdot y+\left( \sup_{|z|<r} h(z) \right) O \left(|\bar{x}|^{-2/\epsilon}
\left( \frac{|\bar{x}|^2}{\epsilon^2} + \frac{|\bar{x}|}{\epsilon} \right) |y|^2 \right) . $$
Note that
$$ D h(0) = -\frac{1}{\epsilon} D \theta (\bar{x},\bar{t}) |\bar{x}|^{1-1/\epsilon}
+\left(\frac{1}{\epsilon}-1\right)(D \theta(\bar{x},\bar{t}) \cdot x) \hat{\bar{x}} |\bar{x}|^{-1/\epsilon} . $$
It follows from \eqref{h-bound} that, for all $y \in \RR^d$,
\begin{align}
|h(y)| &\leq \frac{1 + e^{-B_{k+1} \bar{t}/\epsilon} |\bar{x}|^{(d+\alpha)/\epsilon}}
{1+ e^{-B_k \bar{t}/\epsilon} |\hat{\bar{x}} |\bar{x}|^{1/\epsilon}-y|^{d+\alpha}}
e^{\delta(\bar{t}-t_0+3r_0)/\epsilon} \nonumber \\
&\leq (1+ \min(e^{-B_{k+1} \bar{t}/\epsilon}|\bar{x}|^{(d+\alpha)/\epsilon},
2^{d+\alpha} e^{(B_k-B_{k+1}) \bar{t}/\epsilon})) e^{\delta(\bar{t}-t_0+3r_0)/\epsilon} ,
\label{h-bound-1-alter}
\end{align}
where the last inequality used that, for any $(\bar{x},\bar{t})$ with $|\bar{x}|>1$ and for all
$\epsilon$ sufficiently small,
$$ \frac{1 + e^{-B_{k+1} \bar{t}/\epsilon} |\bar{x}|^{(d+\alpha)/\epsilon}}
{1+ e^{-B_k \bar{t}/\epsilon} |\hat{\bar{x}} |\bar{x}|^{1/\epsilon}-y|^{d+\alpha}}
\leq 1 + e^{-B_{k+1} \bar{t}/\epsilon} |\bar{x}|^{(d+\alpha)/\epsilon} , $$
and
$$ \frac{1 + e^{-B_{k+1} \bar{t}/\epsilon} |\bar{x}|^{(d+\alpha)/\epsilon}}
{1+ e^{-B_k \bar{t}/\epsilon} |\hat{\bar{x}} |\bar{x}|^{1/\epsilon}-y|^{d+\alpha}}
\leq \frac{|\bar{x}|^{-(d+\alpha)/\epsilon}+e^{-B_{k+1} \bar{t}/\epsilon}}
{|\bar{x}|^{-(d+\alpha)/\epsilon}+2^{-(d+\alpha)}e^{-B_k \bar{t}/\epsilon}}
\leq 1 + 2^{d+\alpha} e^{(B_k-B_{k+1}) \bar{t} / \epsilon} . $$
Since the estimates above hold simultaneously, we can bound the ratio by their minimum.
Using \eqref{eigen-intermediate-1} we ontain
\begin{align}
\mathcal{I}_{\epsilon,\alpha} &= \frac{1}{2}
\int_{|y| < r} (H_\epsilon(\theta,\bar{x},y)+H_\epsilon(\theta,\bar{x},-y)) K dy +
\int_{|y| < r} \left( G_\epsilon(\bar{x},y)(1-h(y))
+ G_\epsilon(\bar{x},-y)(1-h(-y)) \right) K dy \nonumber \\
&= \int_{|y| < r} D h(0) \cdot y (G_\epsilon(\bar{x},-y) - G_\epsilon(\bar{x},y)) K dy +
\int_{|y| < r} \left(\sup_{|z|<r} h(z) \right) O \left( |\bar{x}|^{-2/\epsilon}
\left( \frac{|\bar{x}|^2}{\epsilon^2} + \frac{|\bar{x}|}{\epsilon} \right)
|y|^2 \right) K dy \nonumber \\
&= \int_{|y|<r} O\left( | D h(0)| |y|^2 +\left(\sup_{|z|<r} h(z) \right) |\bar{x}|^{-2/\epsilon}
\left( \frac{|\bar{x}|^2}{\epsilon^2} + \frac{|\bar{x}|}{\epsilon} \right)
|y|^2 \right) K dy \nonumber
\end{align}
so that
\begin{equation}
| \mathcal{I}_{\epsilon,\alpha}| \lesssim \left( |\bar{x}|^{1-1/\epsilon} +\left(\sup_{|z|<r} h(z) \right)
|\bar{x}|^{-2/\epsilon}
\left( \frac{|\bar{x}|^2}{\epsilon^2} + \frac{|\bar{x}|}{\epsilon} \right) \right)
r^{2-\alpha} .
\label{I-1}
\end{equation}
Observe that, since $|\bar{x}|>1$, $|\bar{x}|^{-1/\epsilon} r^{2-\alpha} \rightarrow 0$
for all $\alpha \geq 1$.\\
\\
It then follows from \eqref{I-1} and \eqref{h-bound-1-alter} that, for all $\epsilon$ sufficiently small and $\alpha \in [1,2)$,
$$| \mathcal{I}_{\epsilon,\alpha}| \lesssim O(\epsilon) + e^{\delta(\bar{t}-t_0+3r_0)/\epsilon}
\min (e^{-B_{k+1} \bar{t}}|\bar{x}|^d, e^{(B_k-B_{k+1})\bar{t}}|\bar{x}|^{-\alpha})^{1/\epsilon},$$
while using \eqref{b-ratio} and \eqref{delta} we find 
uniformly for $(\bar{x},\bar{t}) \in D_{r_0}(x_0,t_0)$ and $\epsilon > 0$,
$$ \min (e^{-B_{k+1} \bar{t}}|\bar{x}|^d, e^{(B_k-B_{k+1})\bar{t}}|\bar{x}|^{-\alpha}) <
e^{-\delta \bar{t}} \leq e^{-\delta (\bar{t}-t_0+3r_0)},$$
that is, since
$$ \frac{B_{k+1}}{d} > \frac{B_k - B_{k+1}}{\alpha} + \delta \frac{d+\alpha}{d \alpha} , $$
we have either
$$ |\bar{x}| < e^{B_{k+1} \bar{t}/d}e^{-\delta \bar{t}/d} $$
or
$$|\bar{x}| > e^{(B_k-B_{k+1})\bar{t}/\alpha}e^{\delta \bar{t}/\alpha},$$
and, hence, $\underset{\epsilon \to 0}\lim \ |\mathcal{I}_{\epsilon,\alpha}| =0$.\\
\\
The proof  of \eqref{takis40} when $\alpha \in (0,1)$ follows almost immediately from the bounds computed
above. Indeed, in light of  \eqref{eigen-intermediate-1-alternate}, we have
\begin{equation}
|\mathcal{I}_{\epsilon,\alpha}| = \left| \int_{|y|<r} G_\epsilon(\bar{x},y) (1-h(y)) K dy \right| \lesssim
\left(\sup_{|z|<r} h(z) \right) \int_{|y|<r} O(|y|) K dy
\lesssim \left(\sup_{|z|<r} h(z) \right) \frac{|\bar{x}|^{1-1/\epsilon}}{\epsilon} r^{1-\alpha}
\label{I-2}
\end{equation}
and, hence,
$$ |\mathcal{I}_{\epsilon,\alpha}| \lesssim e^{\delta(\bar{t}-t_0+3r_0)/\epsilon}
\min (e^{-B_{k+1} \bar{t}}|\bar{x}|^d, e^{(B_k-B_{k+1})\bar{t}}|\bar{x}|^{-\alpha})^{1/\epsilon} . $$
Arguing as before and using \eqref{h-bound-1-alter}, we now conclude.\\
\\
We note that the decay of $\mathcal{I}_{\epsilon,\alpha}$ and some of the estimates on $h$ are dictated by the decay of
$|\bar{x}|^{-1/\epsilon}$ and $e^{-\bar{t}/\epsilon}$, which is not uniform for arbitrary $(\bar{x},\bar{t})$.
However, $(x_0,t_0)$ was fixed with $|x_0|>1$ and
$t_0>0$, and $r_0 \leq \min (|x_0|-1,t_0)/3$. Therefore, the decay of $\mathcal{I}_{\epsilon,\alpha}$ to zero
(and any bound on $h$) is in fact uniform for any $(\bar{x},\bar{t}) \in D_{r_0}(x_0,t_0)$.\\
\\
It follows that, if $B_{k+1} \geq |\lambda_1|$, then for all $\epsilon$ sufficiently small
(and independent of $\phi$) \eqref{supersol-main} becomes
$$ \phi_t (\bar{x},\bar{t})+\int_{B_R(\bar{x})} \left(1- \frac{\exp(\phi(\eta^\epsilon
(\bar{x},y),\bar{t})/\epsilon)}{\exp(\phi(\bar{x},\bar{t})/\epsilon)} \right) K dy
+\int_{B_R(\bar{x})^c} \left(1-h(y) \right) K dy
\geq \mu(\hat{\bar{x}}|\bar{x}|^{1/\epsilon}) + \frac{\sigma}{3} , $$
which yields that $\psi^\epsilon$ is a super-solution. \\
\\
Next we  derive a contradiction by comparing $\psi^\epsilon$ and $v^\epsilon$.
There is, however, an additional problem.  Since we do not know that $v^\epsilon \leq \psi^\epsilon \ \text{in} \ \RR^d \times (0,\infty), $ we cannot use
$v^\epsilon$ in place of $\phi$ in definition \eqref{supersolution}.
The inductive hypothesis
\eqref{rough-v-bounds} only holds for $v^*$ and $v_*$. These are locally uniform
one-sided limits, so the best we can obtain for $v^\epsilon$ is \eqref{loc-unif-limits} (one-sided
limits that are uniform on compact sets).\\
\\
Consider the compact set $K = \overline{B}_{R_K}(0) \times [0,t_0+r_0+1]$ with
$$R_K := 3 \exp \left( \frac{(B_0+4\delta)(t_0+r_0)}{d+\alpha} \right)
(|x_0|+r_0) + 1 . $$
Then, recalling \eqref{psi-is-good-upper-bound}, for all $\epsilon$ sufficiently small and all $(x,t) \in K$,
$$v^\epsilon < v^* + \delta r_0 \leq \psi + \delta r_0
\leq \min(0,B_k t - (d+\alpha) \log (|x|)) + \delta r_0 . $$
Given $v^\epsilon$, let $\bar{v}^\epsilon$ be a smooth function such that
\begin{equation}\label{v-eps-bar-1}
\bar{v}^\epsilon = v^\epsilon \ \text{ in } \ \overline{B}_{R_K-1}(0) \times [0,t_0+r_0], \  \bar{v}^\epsilon \leq \psi \ \text{ in } \ K^C , \ \text{and} \  \bar{v}^\epsilon \leq v^\epsilon \ \text{ in } \ \RR^d \times [0,\infty) .
\end{equation}
Then, for all $(x,t) \in D_{r_0}(x_0,t_0)^C$, \eqref{theta-ident-1}$(ii)$ implies
$$ \psi^\epsilon = \theta + \epsilon g(\hat{x}|x|^{1/\epsilon}) + \delta(t_0-r_0-t) >
\psi + \delta r_0 > \bar{v}^\epsilon . $$
It follows that, if there ever comes a first
time $\bar{t} \leq t_0+r_0$ where $\underset{x\in \RR^d} \min(\psi^\epsilon(\cdot, \bar{t})-\bar{v}^\epsilon(\cdot, \bar{t})) = 0$,
it must happen at a point $\bar{x} \in B_{r_0}(x_0)$. Then $\bar{v}^\epsilon$ is an admissible test function
(on $B_{r_0}(x_0) \times [t_0-r_0, \bar{t}]$) and must satisfy
\begin{equation}
\bar{v}_t^\epsilon (\bar{x},\bar{t})+\int_{B_R(\bar{x})} \left(1-
\frac{\exp(\bar{v}^\epsilon(\eta^\epsilon (\bar{x},y),\bar{t})/\epsilon)}
{\exp(\bar{v}^\epsilon(\bar{x},\bar{t})/\epsilon)} \right) K dy
+\int_{B_R(\bar{x})^c} \left(1-h(y) \right) K dy
\geq \mu(\hat{\bar{x}}|\bar{x}|^{1/\epsilon}) + \frac{\sigma}{3} .
\label{v-bar-upper}
\end{equation}
However, in view of  \eqref{eq-v-eps},
$$ v_t^\epsilon (\bar{x},\bar{t})+\int \left(1-
\frac{\exp(v^\epsilon(\eta^\epsilon (\bar{x},y),\bar{t})/\epsilon)}
{\exp(v^\epsilon(\bar{x},\bar{t})/\epsilon)} \right) K dy =
\mu(\hat{\bar{x}}|\bar{x}|^{1/\epsilon}) - E e^{-v^\epsilon(\bar{x},\bar{t})/\epsilon} , $$
or
\begin{equation}
\bar{v}_t^\epsilon (\bar{x},\bar{t})+\int \left(1-
\frac{\exp(\bar{v}^\epsilon(\eta^\epsilon (\bar{x},y),\bar{t})/\epsilon)}
{\exp(\bar{v}^\epsilon(\bar{x},\bar{t})/\epsilon)} \right) K dy +
\mathcal{J}_\epsilon = \mu(\hat{\bar{x}}|\bar{x}|^{1/\epsilon}) - E e^{-\bar{v}^\epsilon
(\bar{x},\bar{t})/\epsilon} ,
\label{v-bar-upper-2}
\end{equation}
where
$$ \mathcal{J}_\epsilon :=
\int \frac{\exp(\bar{v}^\epsilon(\eta^\epsilon (\bar{x},y),\bar{t})/\epsilon)
-\exp(v^\epsilon(\eta^\epsilon (\bar{x},y),\bar{t})/\epsilon)}
{\exp(\bar{v}^\epsilon(\bar{x},\bar{t})/\epsilon)} K dy . $$
Observe that, for all $\epsilon$ sufficiently small depending on $x_0$ and $t_0$,
$$ \lbrace{ y: \ |\eta^\epsilon(\bar{x},y)| > R_K-1 \rbrace} =
\lbrace{ y: \ |\hat{\bar{x}}|\bar{x}|^{1/\epsilon}-y| > (R_K-1)^{1/\epsilon} \rbrace}
\subseteq S_\epsilon , $$
where
$$ S_\epsilon := \lbrace{ y: \ |y| > (R_K-1)^{1/\epsilon} - |\bar{x}|^{1/\epsilon} \rbrace} \subset
\lbrace{ y: \ |y| > 2 |\bar{x}|^{1/\epsilon} e^{\frac{(B_0+4\delta)\bar{t}}
{(d+\alpha) \epsilon}} \rbrace} . $$
Since $v^\epsilon$ and $\bar{v}^\epsilon$ are equal on $S_\epsilon^C$, we would like
to subtract the two equations above. This creates some error terms in the nonlocal piece, but $R_K$
is sufficiently large that the errors vanish as $\epsilon \rightarrow 0$. Subtracting
\eqref{v-bar-upper-2} from \eqref{v-bar-upper} yields
\begin{equation}
\int_{B_R(\bar{x})^c} \frac{\exp(\bar{v}^\epsilon(\eta^\epsilon (\bar{x},y),\bar{t})/\epsilon)-
\exp(\psi^\epsilon(\eta^\epsilon (\bar{x},y),\bar{t})/\epsilon)}
{\exp(\bar{v}^\epsilon(\bar{x},\bar{t})/\epsilon)} Kdy - \mathcal{J}_\epsilon \geq
\frac{\sigma}{3} + E e^{-v^\epsilon(\bar{x},\bar{t})/\epsilon} > 0 .
\label{to-contradict}
\end{equation}
The first integral term, however, is nonpositive. Moreover, by \eqref{rough-v-bounds-hyp}
$$ -\mathcal{J}_\epsilon \leq \int_{S_\epsilon} \frac{\exp(v^\epsilon(
\eta^\epsilon(\bar{x},y),\bar{t})/\epsilon)}{\exp(\psi^\epsilon(\bar{x},\bar{t})/\epsilon)} K dy
\lesssim \int_{S_\epsilon} \frac{\left(1+e^{-B_0 \bar{t}/\epsilon}|\hat{\bar{x}}
|\bar{x}|^{1/\epsilon}-y|^{d+\alpha}
\right)^{-1}}{\exp(\min(0,B_{k+1} \bar{t}
-(d+\alpha) \log (|\bar{x}|))/\epsilon)} e^{\frac{4 \delta r_0}{\epsilon}} K dy . $$
Then \eqref{basic-bound} and the choice of $R_K$ yields
$$ -\mathcal{J}_\epsilon \lesssim e^{\frac{4 \delta r_0}{\epsilon}} \int_{S_\epsilon}
\frac{1+e^{-B_{k+1} \bar{t}/\epsilon}|\bar{x}|^{(d+\alpha)/\epsilon}}
{e^{-B_0 \bar{t}/\epsilon}|\hat{\bar{x}}|x|^{1/\epsilon}-y|^{d+\alpha}}
K dy \lesssim \int_{S_\epsilon} \frac{dy}{|y|^{d+\alpha}} \lesssim
2^{-\alpha} |\bar{x}|^{-\alpha/\epsilon} e^{-\frac{\alpha (B_0+4\delta) \bar{t}}
{(d+\alpha) \epsilon}} . $$
The middle inequality used the fact that $|\hat{\bar{x}}|\hat{x}|^{1/\epsilon}-y|^{d+\alpha}
\geq |\bar{x}|^{(d+\alpha)/\epsilon} e^{(B_0+4 \delta)(t_0+r_0)/\epsilon}$ on $S_\epsilon$,
which becomes arbitrarily large as $\epsilon \rightarrow 0$.\\
\\
It follows that the left side of \eqref{to-contradict}
becomes nonpositive as $\epsilon \rightarrow 0$, yet the right side is at least $\sigma/3$. This is a contradiction,
so we conclude that, for all $t < t_0 + r_0$, all $|x| > 1$, and all $\epsilon$ sufficiently small,
$$\bar{v}^\epsilon < \psi^\epsilon = \theta + \epsilon g(\hat{x}|x|^{1/\epsilon})
-\delta (t - t_0+r_0) . $$
This would imply that $v^*(x_0,t_0) \leq \psi(x_0,t_0) - \delta r_0$, since  $\theta = \psi$
and $\bar{v}^\epsilon = v^\epsilon$ inside of $D_{r_0/2}(x_0,t_0)$),
a contradiction to \eqref{first-condition-on-psi}. This shows that \eqref{subsolution-v-upper-not} cannot hold,
thus proving \eqref{improved-bounds-upper} and Proposition \ref{prop-induction-1}.
\end{proof}

\section{Proof of Proposition \ref{prop-induction-2}}
\noindent{}We follow  a strategy similar to the  proof  of Proposition \ref{prop-induction-1}.
There are, however, some key differences, which we explain next.\\
\\
Recall that, when $|x| \leq 1$, $v^*(x,t) \leq 0$ for all $t>0$ followed immediately from \eqref{induction-base}.
The rest of the proof of Proposition \ref{prop-induction-1} 
assumed that $|x|>1$, which was crucial for bounding the nonlocal
diffusion; see \eqref{I-1} and \eqref{I-2}.\\
\\
 A similar bound will be used in the proof of
Proposition \ref{prop-induction-2} and will also assume that $|x|>1$.
However, \eqref{induction-base} by itself does not yield an adequate lower bound
on $v_*$ for $|x| \leq 1$, so this case must be treated in a different way.\\
\\
We first prove \eqref{improved-bounds-lower} on $B_1^C \times (0,\infty) $ without assuming an  improved lower
bound on $v_*$  in $\overline B_1 \times (0,\infty)$.
From this, we extract a lower bound for $u$, the solution to \eqref{main-eq} in
unscaled coordinates, which then lets us prove \eqref{improved-bounds-lower} in $\overline B_1 \times (0,\infty)$
via a nonlocal maximum principle; see Lemma \ref{lem-inner-lower-bound}.

\begin{proof}[The proof of Proposition \ref{prop-induction-2}]
Assume that $\psi$ is a smooth
test function that touches $v_*$ from below at a point $(x_0,t_0)$ with $|x_0| > 1$ and $t_0 > 0$.
Arguing by contradiction, we assume that
\begin{equation}
\psi(x_0,t_0) = v_*(x_0,t_0) < -A_{k+1} t_0 + \min(0, |\lambda_1|t_0 - (d+\alpha) \log (|x_0|)) = W_{k+1}(x_0,t_0) ,
\label{a-bound}
\end{equation}
and, for some $\sigma > 0$,
\begin{equation}
\psi_t (x_0,t_0) - |\lambda_1| + A_{k+1} = -\sigma .
\label{supersolution-v-lower-not}
\end{equation}
We need to modify this test function outside of a neighborhood of $(x_0,t_0)$, but the
analysis overall is simpler than for Proposition \ref{prop-induction-1}.\\
\\
Let $r_0 = \min(t_0,|x_0|-1)/3$. and define $D_{r_0}(x_0,t_0)$ as before.
Without loss of generality, in light of
\eqref{rough-v-bounds}, we may assume that
\begin{equation}
\min(0,|\lambda_1|t-(d+\alpha)\log(|x|)) - A_{k+1} t \geq
\psi(x,t) \geq \min(0,|\lambda_1|t-(d+\alpha)\log(|x|)) - A_k t .
\label{psi-is-good-lower-bound}
\end{equation}
Let $\delta > 0$ be such that
\begin{equation}
A_k - A_{k+1} + \delta < \frac{|\lambda_1|}{4(d+\alpha)} \ \text{ and } \ \delta <
\min \left( \frac{\sigma}{3} , \frac{A_{k+1} (t_0-r_0)}{3r_0} \right);
\label{delta-2}
\end{equation}
notice that, if $\delta$ and $r_0$ satisfy \eqref{delta-2}, then so do any $\delta'<\delta$ and $r_0'<r_0$.
We can therefore reduce the values of $\delta$ and $r_0$ without violating
\eqref{delta-2}.\\
\\
Taking $r_0$ smaller, depending on $\psi$ but not
$\epsilon$, we construct a smooth function $\theta$ satisfying, for some $C>0$ independent of $\epsilon$,
\begin{align}
\left\{
\begin{array}{ll}
\medskip
(i) \ & \theta = \psi \ \text{ on } \ D_{r_0/2}(x_0,t_0) , \\
\medskip
(ii) \ & \theta = \psi-\max(\delta r_0,\delta(t-t_0+2r_0)) \ \text{ on }
\ D_{r_0}(x_0,t_0)^C, \\
\medskip
(iii) \ & \theta \geq \psi -\max(\delta r_0,\delta(t-t_0+2r_0)) \
\text{ everywhere }, \\
\medskip
(iv) \ & \theta \leq \min(0,|\lambda_1|t-(d+\alpha)\log (|x|))-A_{k+1} t+\delta r_0 < 0 \ \text{ on }
\ D_{r_0}(x_0,t_0), \\
\medskip
(v) \ & \theta_t-|\lambda_1|+A_{k+1} < -2 \sigma / 3 \ \text{ on } \ D_{r_0}(x_0,t_0), \\
\medskip
(vi) \ & \| D \theta \|, \ \| D^2 \theta \|< C . 
\end{array}
\right.
\label{theta-ident-2}
\end{align}
The existence of such a $\theta$ follows from \eqref{rough-v-bounds} and the fact that $\psi$ is strictly below $v_*$  away from  $(x_0,t_0)$;
note that it may be necessary to choose  $r_0$ and $\delta$ smaller.
We also remark that \eqref{theta-ident-2}$(ii)$ and \eqref{psi-is-good-lower-bound} yield that the upper bound
\eqref{theta-ident-2}$(iv)$ holds
for all $(x,t)\in \RR^d \times [0, t_0+r_0]$.\\
\\
Recall that $e^g$ is the principal eigenfunction for the linearized operator and define
$$ \psi^\epsilon(x,t):= \theta(x,t)+\epsilon g(\hat{x}|x|^{1/\epsilon})-\delta (t_0 -r_0 -t) . $$
We show that, for all $\epsilon$ sufficiently small, depending on $\psi$, $r_0$, $x_0$, and $t_0$, $\psi^\epsilon$
is a sub-solution to \eqref{eq-v-eps} in the cylinder $D_{r_0}(x_0,t_0)$.\\
\\
Let $\phi$ be a smooth test function such that $\psi^\epsilon - \phi$ assumes a
maximum value of $0$ at $(\bar{x},\bar{t}) \in D_{r_0}(x_0,t_0)$ with $|\bar{x}|>1$.
Since
\begin{equation}
1- \frac{\exp(\phi(\eta^\epsilon(\bar{x},y),\bar{t})/\epsilon)}{\exp(\phi(\bar{x},\bar{t})/\epsilon)}
\leq 1 - \frac{\exp(\psi^\epsilon(\eta^\epsilon(\bar{x},y),\bar{t})/\epsilon)}
{\exp(\psi^\epsilon(\bar{x},\bar{t})/\epsilon)} ,
\label{subsol-pre-1}
\end{equation}
in view of  \eqref{theta-ident-2}$(v)$ we have 
\begin{equation}
\phi_t(\bar{x},\bar{t}) \leq \psi_t^\epsilon(\bar{x},\bar{t}) =
\theta_t(\bar{x},\bar{t}) + \delta < |\lambda_1|-A_{k+1}-\sigma/3 .
\label{subsol-pre-2}
\end{equation}
Using \eqref{subsol-pre-1} and \eqref{subsol-pre-2} we find that, for any $R>0$, at the point $(\bar{x},\bar{t})$
\begin{align}
\phi_t &+ \int_{B_R(\bar{x})} \left( 1-
\frac{\exp(\phi(\eta^\epsilon(\bar{x},y),\bar{t})/\epsilon)}{\exp(\phi/\epsilon)}
\right) K dy + \int_{B_R(\bar{x})^C} \left( 1- \frac{\exp(\psi^\epsilon(\eta^\epsilon(\bar{x},y),\bar{t})/\epsilon)}
{\exp(\psi^\epsilon/\epsilon)} \right) K dy \nonumber \\
& \leq \theta_t + \frac{\sigma}{3} + \int \left( 1-
\frac{\exp(\psi^\epsilon(\eta^\epsilon(\bar{x},y),\bar{t})/\epsilon)}
{\exp(\psi^\epsilon/\epsilon)} \right) K dy
< |\lambda_1| - A_{k+1} - \frac{\sigma}{3} +
\frac{L^\alpha[e^{\psi^\epsilon}]}{e^{\psi^\epsilon}}
\nonumber \\
& = -A_{k+1}-\frac{\sigma}{3} +
\mu(\hat{\bar{x}}|\bar{x}|^{1/\epsilon}) +
\mathcal{I}_{\epsilon,\alpha} +
\int_{|y|\geq r} G_\epsilon(\bar{x},y) \left(1-
\frac{\exp(\theta(\eta^\epsilon(\bar{x},y),\bar{t})/\epsilon)}
{\exp(\theta/\epsilon)} \right) K dy ,
\label{subsol-main}
\end{align}
where  $r = r(\epsilon) = \epsilon |\bar{x}|^{1/\epsilon}$ and 
\begin{align*}
\mathcal{I}_{\epsilon,\alpha} := \left\{
\begin{array}{ll}
\medskip
\int_{|y| < r} G_\epsilon(\bar{x},y) \left(1- \frac{\exp(\theta(\eta^\epsilon(\bar{x},y),
\bar{t})/\epsilon)}{\exp(\theta(\bar{x},\bar{t})/\epsilon)} \right) K dy , & \text{ if } \ \alpha < 1, \\
\medskip
\frac{1}{2} \int_{|y| < r} (H_\epsilon(\theta,\bar{x},y)+H_\epsilon(\theta,\bar{x},-y)) K dy,
& \text{ if } \ \alpha \geq 1.
\end{array}
\right.
\end{align*}
\\
Compared to the proof of Proposition \ref{prop-induction-1}, the situation here
is much simpler. For instance, the direction of the inequality in
\eqref{subsolution} means we only need to bound the last term in \eqref{subsol-main}
from above. Since the integrand is bounded above uniformly by $K \| G_\epsilon \|$,
we conclude from \eqref{takis4} that
$$ \int_{|y|\geq r} G_\epsilon(\bar{x},y) \left(1-
\frac{\exp(\psi^\epsilon(\eta^\epsilon(\bar{x},y),\bar{t})/\epsilon)}
{\exp(\psi^\epsilon(\bar{x},\bar{t})/\epsilon)} \right) K dy \lesssim
\int_{|y|\geq r} \frac{dy}{|y|^{d+\alpha}} \lesssim r^{-\alpha} . $$
As for the short range, the formula for $\mathcal{I}_{\epsilon,\alpha}$ above is identical to
the one given after \eqref{supersol-rhs} of the previous section. As such, an identical
analysis of $\mathcal{I}_{\epsilon,\alpha}$ as done in the proof of Proposition \ref{prop-induction-1}
using Taylor's theorem yields that \eqref{I-1} and
\eqref{I-2} still hold, respectively when $\alpha \in [1,2)$ and $\alpha \in (0,1)$.\\
\\
To prove that $\underset{\epsilon \rightarrow 0} \lim \ |\mathcal{I}_{\epsilon,\alpha}| = 0$, the only
remaining issue is to bound the size of $\sup_{|z|<r} h(z)$, where
$$ h(y) = \frac{\exp(\theta(\eta^\epsilon(\bar{x},y),\bar{t})/\epsilon)}
{\exp(\theta(\bar{x},\bar{t})/\epsilon)} . $$
Using \eqref{psi-is-good-lower-bound}, \eqref{theta-ident-2}$(iii)$, \eqref{theta-ident-2}$(iv)$,
and \eqref{b-ratio} we have, for all $|y|<r$,
\begin{align}
h(y) &\leq \frac{\exp (-A_{k+1} \bar{t}/\epsilon +
\min (0,|\lambda_1| \bar{t}/\epsilon -(d+\alpha) \log (|\hat{\bar{x}}
|\bar{x}|^{1/\epsilon} - y|))}
{\exp (-A_k \bar{t}/\epsilon +
\min (0, |\lambda_1| \bar{t}/\epsilon-(d+\alpha) \log |\bar{x}|/\epsilon))}
e^{\delta (\bar{t}-t_0 +3 r_0)/\epsilon}e^{g(\hat{\bar{x}}|\bar{x}|^{1/\epsilon})} \nonumber \\
&\lesssim
\frac{1+e^{-|\lambda_1|\bar{t}/\epsilon}|\bar{x}|^{(d+\alpha)/\epsilon}}
{1+e^{-|\lambda_1|\bar{t}/\epsilon}|\hat{\bar{x}}|\bar{x}|^{1/\epsilon}-y|^{d+\alpha}}
e^{(A_k - A_{k+1}+\delta) \bar{t}/\epsilon}
\lesssim \left( e^{(A_k-A_{k+1}+\delta) \bar{t}}
\min(1, e^{-|\lambda_1| \bar{t}} |\bar{x}|^{d+\alpha}) \right)^{1/\epsilon} .
\label{h-bound-2}
\end{align}
It then follows from  \eqref{I-1}, \eqref{I-2}, and \eqref{h-bound-2} that, for $\alpha \in (0,2)$ and all $\epsilon$ sufficiently small,
$$ |\mathcal{I}_{\epsilon,\alpha}| \lesssim O(\epsilon) + |\bar{x}|^{-\alpha/\epsilon} + \mathcal{H}_\epsilon , $$
with
$$ \mathcal{H}_\epsilon :=
e^{(A_k-A_{k+1}+\delta)\bar{t}/\epsilon} |\bar{x}|^{-1/\epsilon}\min(1,
e^{-|\lambda_1| \bar{t}} |\bar{x}|^{d+\alpha})^{1/\epsilon} . $$
If
$|\bar{x}|^{d+\alpha} \geq e^{|\lambda_1|\bar{t}/2}$, then
$$ \mathcal{H}_\epsilon \leq \left( e^{(A_k-A_{k+1}+\delta)\bar{t}}|\bar{x}|^{-1} \right)^{1/\epsilon}
\leq \exp \left( \left(A_k-A_{k+1}+\delta-\frac{|\lambda_1|}{2(d+\alpha)} \right)
\frac{\bar{t}}{\epsilon} \right) = c_1^{1/\epsilon} , $$
and otherwise 
$$ \mathcal{H}_\epsilon \leq \left( e^{(A_k-A_{k+1}+\delta)\bar{t}}
e^{-|\lambda_1| \bar{t}} |\bar{x}|^{d+\alpha} \right)^{1/\epsilon} \leq
\exp \left( \left( A_k - A_{k+1}+\delta - \frac{|\lambda_1|}{2} \right)
\frac{\bar{t}}{\epsilon} \right) = c_2^{1/\epsilon} , $$
where $0<c_2<c_1<1$ and both constants are independent of $\epsilon$.\\
\\
Hence $\underset{\epsilon \to 0}\lim \ \mathcal{H}_\epsilon =0$, and,
therefore, for all $\epsilon$ sufficiently small and independently of $\phi$, \eqref{subsol-main} becomes
$$ \phi_t (\bar{x},\bar{t})+\int_{B_R(\bar{x})} \left(1- \frac{\exp(\phi(\eta^\epsilon
(\bar{x},y),\bar{t})/\epsilon)}{\exp(\phi(\bar{x},\bar{t})/\epsilon)} \right) K dy
+\int_{B_R(\bar{x})^c} \left(1-h(y) \right) K dy
\leq \mu(\hat{\bar{x}}|\bar{x}|^{1/\epsilon}) - \frac{\sigma}{3} - A_{k+1} , $$
that is $\psi^\epsilon$ is a sub-solution.\\
\\
Next we derive a contradiction by comparing $\psi^\epsilon$ with $v^\epsilon$. Let
$K:=\overline{B}_{R_K}(0) \times [0,t_0+r_0+1]$ with
$$R_K := 3 \exp \left( \frac{(A_0+|\lambda_1|+4 \delta)(t_0+r_0)}{d+\alpha} \right) (|x_0|+r_0)+1 . $$
It follows from \eqref{loc-unif-limits} and \eqref{psi-is-good-lower-bound} that, for all $\epsilon$ sufficiently
small and all $(x,t) \in K$,
$$ v^\epsilon > v_* - \delta r_0 / 2 \geq \psi - \delta r_0 / 2
\geq \min(0,|\lambda_1|t-(d+\alpha)\log(|x|))-A_k t-\delta r_0/2 . $$
Let $\bar{v}^\epsilon$ be such that
\begin{equation}\label{v-eps-bar-2}
\bar{v}^\epsilon = v^\epsilon \ \text{ in } \ \overline{B}_{R_K-1}(0) \times [0,t_0+r_0], \  \bar{v}^\epsilon \geq \psi \ \text{ in } \ K^C, \  \text{and} \ \bar{v}^\epsilon \geq v^\epsilon \ \text{ in } \ \RR^d \times [0,\infty). 
\end{equation}
Then, in light of \eqref{theta-ident-2}$(ii)$, we see that, for all $(x,t) \in D_{r_0}(x_0,t_0)^C$ and $\epsilon$ sufficiently
small that $\epsilon g \leq \delta r_0 / 2$,
$$ \psi^\epsilon = \theta+\epsilon g(\hat{x}|x|^{1/\epsilon}) - \delta (t_0-r_0-t)
\leq \psi - \delta r_0 / 2 < \bar{v}^\epsilon (x,t) . $$
If $\bar{t} \leq t_0+r_0$ is the first time such that $\underset{x\in \RR^d} \max(\psi^\epsilon(\cdot, \bar{t})-\bar{v}^\epsilon
(\cdot, \bar{t})) = 0$, let $\bar{x} \in B_{r_0}(x_0)$ be a point in space where this maximum is achieved.\\
\\
Then $\bar{v}^\epsilon$ becomes
an admissible test function and must satisfy 
$$ \bar{v}_t^\epsilon (\bar{x},\bar{t})+\int_{B_R(\bar{x})} \left(1- \frac{\exp(\bar{v}^\epsilon(\eta^\epsilon
(\bar{x},y),\bar{t})/\epsilon)}{\exp(\bar{v}^\epsilon(\bar{x},\bar{t})/\epsilon)} \right) K dy
+\int_{B_R(\bar{x})^c} \left(1-h(y) \right) K dy
\leq \mu(\hat{\bar{x}}|\bar{x}|^{1/\epsilon}) - \frac{\sigma}{3} - A_{k+1} . $$
On the other hand,
$$ \bar{v}_t^\epsilon (\bar{x},\bar{t})+\int \left(1-
\frac{\exp(\bar{v}^\epsilon(\eta^\epsilon (\bar{x},y),\bar{t})/\epsilon)}
{\exp(\bar{v}^\epsilon(\bar{x},\bar{t})/\epsilon)} \right) K dy +
\mathcal{J}_\epsilon = \mu(\hat{\bar{x}}|\bar{x}|^{1/\epsilon}) - E e^{-\bar{v}^\epsilon
(\bar{x},\bar{t})/\epsilon} , $$
where
$$ \mathcal{J}_\epsilon := \int \frac{\exp(\bar{v}^\epsilon(\eta^\epsilon (\bar{x},y),\bar{t})/\epsilon)
-\exp(v^\epsilon(\eta^\epsilon (\bar{x},y),\bar{t})/\epsilon)}
{\exp(\bar{v}^\epsilon(\bar{x},\bar{t})/\epsilon)} K dy . $$
Subtracting the last two equations yields, in light of \eqref{E-assumption},
\begin{equation}
\int_{B_R(\bar{x})^C} \frac{\exp(\bar{v}^\epsilon(\eta^\epsilon(\bar{x},y),\bar{t})/\epsilon)
-\exp(\psi^\epsilon(\eta^\epsilon(\bar{x},y),\bar{t})/\epsilon)}{\exp(\bar{v}^\epsilon(\bar{x},\bar{t})/\epsilon)}
K dy - \mathcal{J}_\epsilon \leq \bar{m} e^{v^\epsilon(\bar{x},\bar{t})/\epsilon} - \frac{\sigma}{3}-A_{k+1} .
\label{breakpoint}
\end{equation}
\\
Note that the first term in the left hand side of \eqref{breakpoint} is nonnegative, while, as before, 
$$ \lbrace{ y: \ |\eta^\epsilon(\bar{x},y)| > R_K - 1 \rbrace} \subset S_\epsilon :=
\lbrace{ y: \ |y| > 2 |\bar{x}|^{1/\epsilon} e^{\frac{(A_0+|\lambda_1|+4 \delta)\bar{t}}
{(d+\alpha) \epsilon}} \rbrace} \ \  \text{and} \ \  v^\epsilon = \bar{v}^\epsilon \ \text{ on} \  S_\epsilon^C.$$
It follows from \eqref{psi-is-good-lower-bound} that
\begin{align}
\mathcal{J}_\epsilon &\leq \int_{S_\epsilon} \frac{\exp(\bar{v}^\epsilon(\eta^\epsilon(\bar{x},y),\bar{t})/\epsilon)}
{\exp(\bar{v}^\epsilon(\bar{x},\bar{t})/\epsilon)} K dy \nonumber \\
&\lesssim \int_{S_\epsilon} \frac{\exp((\min(0,|\lambda_1|\bar{t}
-(d+\alpha) \log (|\hat{\bar{x}}|\bar{x}|^{1/\epsilon}-y|^\epsilon)) - A_{k+1} \bar{t} /\epsilon)}
{\exp((\min(0,|\lambda_1| \bar{t}-(d+\alpha) \log (|\bar{x}|))-A_k \bar{t})/\epsilon)}
\frac{e^{4\delta r_0 /\epsilon}}{|y|^{d+\alpha}}dy \nonumber \\
&\lesssim e^{(A_k-A_{k+1}+4\delta r_0)/\epsilon} \int_{S_\epsilon}
\frac{1 + e^{-|\lambda_1| \bar{t}/\epsilon} |\bar{x}|^{(d+\alpha)/\epsilon}}
{e^{-|\lambda_1| \bar{t}/\epsilon} |\hat{\bar{x}}|\bar{x}|^{1/\epsilon}-y|^{d+\alpha}}
\frac{dy}{|y|^{d+\alpha}} \lesssim \int_{S_\epsilon} \frac{dy}{|y|^{d+\alpha}}
\lesssim 2^{-\alpha} |\bar{x}|^{-\alpha/\epsilon} e^{-\frac{\alpha (A_0+|\lambda_1|+4 \delta)\bar{t}}
{(d+\alpha)\epsilon}} . \nonumber
\end{align}
Hence, as $\epsilon \rightarrow 0$, $\mathcal{J}_\epsilon$ becomes nonpositive. Then, for all $\epsilon$ sufficiently small,
\begin{equation}
\frac{\sigma}{3} \leq \bar{m} e^{v^\epsilon(\bar{x},\bar{t})/\epsilon} = \bar{m} e^{\psi^\epsilon
(\bar{x},\bar{t})/\epsilon} .
\label{breakpoint-2}
\end{equation}
The left hand side of \eqref{breakpoint-2} is fixed and strictly positive.
However, in view of  \eqref{theta-ident-2}$(iv)$ and the definition of $\psi^\epsilon$,
$$ \psi^\epsilon(\bar{x},\bar{t}) \leq -A_{k+1} \bar{t} + \delta r_0 -\delta (t_0-r_0-\bar{t})
+ \epsilon g(\hat{\bar{x}}|\bar{x}|^{1/\epsilon}),$$
so that
\begin{equation}
e^{\psi^\epsilon(\bar{x},\bar{t})/\epsilon} \lesssim e^{-A_{k+1} (t_0-r_0)/\epsilon}
e^{3 \delta r_0 / \epsilon} .
\label{breakpoint-3}
\end{equation}
In light  of \eqref{delta-2}, the right hand side of \eqref{breakpoint-3} becomes arbitrarily small
as $\epsilon \rightarrow 0$, a contradiction to \eqref{breakpoint-2}.
It follows that there is no such $(\bar{x},\bar{t})$ in $D_{r_0}(x_0,t_0)$, and we conclude that,
for all $(x,t) \in D_{r_0/2}(x_0,t_0)$ and all $\epsilon$ sufficiently small,
$$ \psi^\epsilon = \theta+\epsilon g(\hat{x}|x|^{1/\epsilon})+\delta r_0/2 < v^\epsilon . $$
Since $\theta=\psi$ on this set by \eqref{theta-ident-2}$(i)$,
$$ \psi(x_0,t_0) \leq v_*(x_0,t_0) - \delta r_0 / 2 , $$
contradicting \eqref{a-bound}. This shows that \eqref{supersolution-v-lower-not} cannot hold,
thus proving the variational inequality \eqref{improved-bounds-lower} for all $(x,t) \in B_1^C \times (0,\infty).$ \\
\\
Next we establish an improved lower bound in $\overline B_1  \times (0,\infty)$.
Note that, since we have already proved Proposition~\ref{prop-induction-1} independently of the rest of this section, it follows that
\begin{equation}
v^* \leq \min(0,|\lambda_1|t - (d+\alpha) \log(|x|)) \ \text{ for all } \ (x,t) \in \RR^d \times [0,\infty) .
\label{already-have-this}
\end{equation}
Indeed, since the proof of Proposition \ref{prop-induction-1} never used Proposition \ref{prop-induction-2}
or the lower bound of \eqref{rough-v-bounds} for any $k$, the inductive argument of Section 3 shows
that \eqref{already-have-this} holds independently.\\
\\
Also, since the previous argument showed that \eqref{improved-bounds-lower} holds in  $B_1^C\times (0,\infty),$
we use Lemma \ref{lem-induction-complete} (with $U = \overline{B}_1(0)^C$) to conclude that 
\begin{equation}\label{inductive-lower-bound-semi}
\min(0,|\lambda_1|t-(d+\alpha) \log(|x|))-A_{k+1} t \leq v_* \ \text{ for all } \ (x,t) \in
B_1^C\times (0,\infty).
\end{equation}
Assuming the next lemma, we have now concluded the proof of Proposition \ref{prop-induction-2}.
\end{proof}

\begin{lemma}
Assume \eqref{already-have-this} and \eqref{inductive-lower-bound-semi}.
Then, for all $(x,t)\in \overline B_1 \times (0,\infty)$, we have
$ v_* \geq -A_{k+1}t$.
\label{lem-inner-lower-bound}
\end{lemma}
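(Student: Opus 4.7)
The plan is to reinterpret the claim $v_* \geq -A_{k+1}t$ on $\overline{B}_1 \times (0,\infty)$ as a pointwise lower bound $u(y,s) \gtrsim e^{-(A_{k+1}+\eta)s}$ for $y$ in the closed unit ball and $s$ large, and to establish the latter by a sub-solution comparison built around the principal eigenfunction $e^g$ supplied by \eqref{negative-eigenvalue}. Indeed, since $\hat x|x|^{1/\epsilon} \in \overline{B}_1$ whenever $|x| \leq 1$, any such bound on $u$ translates, via $v^\epsilon(x,t) = \epsilon \log u(\hat x |x|^{1/\epsilon}, t/\epsilon)$, to $v^\epsilon(x,t) \geq -(A_{k+1}+\eta)t - o(1)$, and hence to $v_*(x,t) \geq -(A_{k+1}+\eta) t$; letting $\eta \to 0$ would conclude.

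The first step is to extract from \eqref{inductive-lower-bound-semi} and the $\liminf$ definition of $v_*$ the fact that, for every $\eta>0$, there exist $s_0$ and $c>0$ with $u(y,s) \geq c\, e^{-(A_{k+1}+\eta)s}$ on the shell $\{1 < |y| < e^{|\lambda_1|s/(d+\alpha)}\}$ for all $s \geq s_0$; this is obtained by varying $x$ slightly outside the unit sphere and tracking the range of $y = \hat x|x|^{1/\epsilon}$ as $\epsilon \to 0$. I would then set
\[
\underline u(y,s) := \delta\, e^{g(y)}\, e^{-(A_{k+1}+2\eta)s}
\]
and use $L^\alpha e^g = (\mu - |\lambda_1|)e^g$ to compute $\underline u_s + L^\alpha \underline u = (\mu - |\lambda_1| - A_{k+1} - 2\eta)\underline u$, so that $\underline u$ is a sub-solution of \eqref{main-eq} provided $E(y,\underline u) \leq (|\lambda_1|+A_{k+1}+2\eta)\underline u$, which by \eqref{E-assumption} holds for $\delta$ small.

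The heart of the argument is a nonlocal comparison. Fixing $R > 1$, I would compare $\underline u$ with $u$ on $B_R \times [0,T]$ via the parabolic maximum principle for the singular integral equation. The initial inequality $\underline u(\cdot,0) \leq u_0$ on $B_R$ follows from \eqref{initial-data} for $\delta$ small; for the required exterior data on $B_R^c$, the first step dominates $\underline u$ on the annulus $\{R < |y| < e^{|\lambda_1|s/(d+\alpha)}\}$ (the $\eta$-margin between the exponents $A_{k+1}+\eta$ and $A_{k+1}+2\eta$ absorbs any prefactor once $s$ is large), while beyond that radius the rough bound of Proposition \ref{prop-rough} suffices after choosing $\delta$ small enough to beat the $A_0$-exponent on a preliminary time interval before the inductive bound takes over. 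The conclusion $u \geq \underline u$ on $B_R \times [0,T]$ then yields $u(y,s) \geq \delta e^{-\|g\|} e^{-(A_{k+1}+2\eta)s}$ for $y \in \overline B_1$, from which the claim follows by the translation back to $v_*$ described above.

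The hard part will be Step 3: since $\underline u$ is bounded below by a positive periodic function while $u$ decays at infinity, no direct global comparison is available, and the nonlocal maximum principle on $B_R$ requires dominating $\underline u$ by $u$ on the entire complement $B_R^c$, not merely at $|y|=R$. The $\eta$-margin in the exponent together with the improved exterior bound provides the needed slack on the annular shell, and the bounded time interval keeps the rough bound workable further out; should the latter prove insufficient for very large $|y|$, I would refine $\underline u$ by multiplying by $1/(1+e^{-|\lambda_1|s}|y|^{d+\alpha})$ to match the natural decay of $u$ and control the resulting $L^\alpha$ cross-terms via Lemma \ref{lem-ACC}.
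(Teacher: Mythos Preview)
Your approach is genuinely different from the paper's, and the gap you flag in your final paragraph is real and not resolved by what you wrote. The candidate sub-solution $\underline u(y,s)=\delta e^{g(y)}e^{-(A_{k+1}+2\eta)s}$ is bounded below by a positive constant uniformly in $y$, while the rough bound of Proposition~\ref{prop-rough} gives $u(y,s)\lesssim e^{-A_0 s}/(e^{-|\lambda_1|s}|y|^{d+\alpha})\to 0$ as $|y|\to\infty$ for each fixed $s$. Hence for every $\delta>0$ and every $s>0$ the exterior inequality $u\geq\underline u$ fails for $|y|$ large, so the nonlocal comparison on $B_R$ cannot be closed; the phrase ``the bounded time interval keeps the rough bound workable further out'' is simply false. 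Your proposed fix $\underline u\mapsto \underline u\,/(1+e^{-|\lambda_1|s}|y|^{d+\alpha})$ is the right instinct, but Lemma~\ref{lem-ACC} only controls $L^\alpha[h]$; applying $L^\alpha$ to the product $e^g h$ produces a carr\'e-du-champ cross-term $\int(e^{g(x)}-e^{g(x+y)})(h(x)-h(x+y))K\,dy$ that is not covered by that lemma and must be estimated separately before the modified $\underline u$ can be declared a sub-solution.

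The paper sidesteps the global sub-solution issue entirely. It works directly with the equation at a spatial minimum of $u$ on $\overline B_1$: if $(y_0,s_0)$ minimizes $u(\cdot,s_0)$ on $\overline B_1$ with value below $e^{-A_{k+1}s_0-\sigma/\epsilon}$, then at that point $L^\alpha[u]\leq 0$ over the inner ball, and the exterior lower bound from \eqref{inductive-lower-bound-semi} (translated as in your Step~1) makes the nonlocal term strictly negative enough to force $u_t(y_0,s_0)>0$. Thus any such minimum on $\overline B_1\times[T/2,T]$ must occur at the left endpoint $T/2$, and iterating $T\mapsto T/2$ pushes the bad minimum down to $t=0$, contradicting \eqref{initial-data-limit}. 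This argument uses the exterior lower bound only as a source term in the integral at a single point, so the lack of decay in $y$ never enters; that is precisely what your sub-solution strategy has to fight.
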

\begin{proof}
Fix $T>0$,  let $R = e^{2|\lambda_1|T/(d+\alpha)}$ and  
$$ \overline{S}_{R,T} := \lbrace{(x,t): \  1 \leq |x| \leq R , \ T/2 \leq t \leq T \rbrace} . $$
Then, for any $\nu>0$, all $(x,t) \in \overline{S}_{R,T}$, and
all $\epsilon$ sufficiently small, it follows from \eqref{loc-unif-limits} that
\begin{equation}
v^\epsilon > \min(0,|\lambda_1|t-(d+\alpha)\log(|x|))-A_{k+1}t-\nu ,
\label{c1}
\end{equation}
which implies
$$u(\hat{x}|x|^{1/\epsilon},t/\epsilon) = u^\epsilon(x,t) = e^{v^\epsilon(x,t)/\epsilon} >
\frac{C_1e^{-A_{k+1}t/\epsilon}e^{-\nu/\epsilon}}{1+e^{-|\lambda_1|t/\epsilon}
|x|^{(d+\alpha)/\epsilon}} . $$
Hence,
\begin{equation}
u(y,s) > \frac{C_1e^{-A_{k+1}s-\nu/\epsilon}}{1+e^{-|\lambda_1|s}|y|^{d+\alpha}} \
\text{ on } \ \lbrace{(y,s): \ 1 \leq |y| \leq R^{1/\epsilon} ,
\ T/(2\epsilon) \leq s \leq T/\epsilon \rbrace} .
\label{c2}
\end{equation}
Keeping in mind that the solution $u$ is bounded above by a fixed constant for all time, it follows that
there exists $C_0>0$ such that, uniformly in $x$,
\begin{equation}
f(x,u) \geq -C_0 u .
\label{poor-mans-bound}
\end{equation}
Assume that $v^\epsilon$ achieves its minimum on the set $\overline{B}_1(0)
\times [T/2,T]$ at $(x_0,t_0)$ and that, for some $\sigma > 0$, $v^\epsilon(x_0,t_0) = -A_{k+1}t_0-\sigma$.
Then $u$ achieves its minimum value on $\overline{B}_1(0) \times [T/(2\epsilon),T/\epsilon]$
at the point $(y_0,s_0) = (\hat{x_0}|x_0|^{1/\epsilon}, t_0/\epsilon)$, and 
in view of  \eqref{main-eq} and \eqref{poor-mans-bound},
$$ u_t = f(y_0,u) - L^\alpha[u]
\geq -C_0 u + \int_{|y_0+z| > 1} (u(y_0+z,s_0)-u(y_0,s_0))K(y_0,z)dz . $$
Note that the inner range $\overline B_1(-y_0)$ of integration can be safely omitted from the inequality because,
by assumption, $u(w,s_0)-u(y_0,s_0) \geq 0$ for $|w| \leq 1$.\\
\\
Using \eqref{c1}, \eqref{c2}, and the size of $R$, we get that
$$ u_t(y_0,s_0) \gtrsim -C_0e^{-A_{k+1} s_0-\sigma/\epsilon} +
e^{-A_{k+1}s_0-\nu/\epsilon} I_\epsilon
-e^{-A_{k+1}s_0-\sigma/\epsilon} \int_{|y_0+z|^{d+\alpha}
\geq e^{|\lambda_1| s_0}} \frac{dz}{|z|^{d+\alpha}} , $$
where
$$ I_\epsilon := \int_{1<|y_0+z|^{d+\alpha}< e^{|\lambda_1|s_0}}
\left( \frac{C_1}{1+e^{-|\lambda_1|s_0}|y_0+z|^{d+\alpha}} - e^{(\nu-\sigma)/\epsilon} \right)
\frac{dz}{|z|^{d+\alpha}} . $$
Choosing $\nu < \sigma / 2$ and
$\epsilon$ sufficiently small depending on $\sigma$ and $C_1$, we have
$$e^{(\nu-\sigma)/\epsilon} < e^{-\sigma/(2\epsilon)} < C_1/4 . $$
Moreover, since $|y_0|\leq 1$, we have $|y_0+z| \leq |z|+1$, and,  therefore,
$$ \lbrace{2 < |z|^{d+\alpha} < e^{|\lambda_1|s_0}/2 \rbrace}
\subseteq \lbrace{1 < |y_0+z|^{d+\alpha} < e^{|\lambda_1|s_0} \rbrace} \ 
\text{and} \ 
 \lbrace{ |y_0 + z|^{d+\alpha} \geq e^{|\lambda_1|s_0} \rbrace}
\subseteq \lbrace{|z|^{d+\alpha} \geq e^{|\lambda_1| s_0}/2 \rbrace}.$$
Lastly, because  $s_0 > T/(2\epsilon)$, we know that
$e^{|\lambda_1|s_0} \rightarrow \infty$ as $\epsilon \rightarrow 0$.
Then, for all $\epsilon$ sufficiently small, there exists a fixed $C_1' > 0$ such that
$$ I_\epsilon \gtrsim \int_{2<|z|^{d+\alpha}< e^{|\lambda_1|s_0}}
\frac{dz}{(1+e^{-|\lambda_1|s_0}|z|^{d+\alpha})|z|^{d+\alpha}}
\gtrsim C_1' . $$
Therefore, for all $\epsilon$ sufficiently small,
$$ u_t(y_0,s_0) \gtrsim e^{-A_{k+1} s_0 - \sigma/(2\epsilon)} \left(
-C_0 e^{-\sigma/(2\epsilon)} + C_1' - C_2 e^{-\sigma/(2\epsilon)} e^{-\alpha |\lambda_1| s_0} \right), $$
that is, the time derivative of $u$ is positive at its minimum
value on $\overline{B}_1(0) \times [T/(2\epsilon),T/\epsilon]$. This can only happen if $s_0 = T/(2\epsilon)$.
Since $\sigma > 0$ was arbitrary, we conclude that, if $v^\epsilon$ has a minimum on $\overline{B}_1(0)
\times [T/2,T]$ that lies below $-A_{k+1} t$, then this minimum must occur at time $T/2$.\\
\\
It follows that the same must hold true for $v_*$,
which  is lower semicontinuous, so it always has a minimum on compact sets. 
Also recall that $T>0$ was arbitrary.
If there is any time $t_0$ such that
$$ \min_{|x|\leq 1} v_*(x,t_0) < -A_{k+1}t_0 , $$
then we apply the previous  conclusion for the set $\bar{B_1}(0) \times [t_0/2,t_0]$ and obtain that
$$ \min_{|x|\leq 1} v_*(x,t_0/2) < -A_{k+1}t_0 < -A_{k+1}t_0/2 . $$
Repeating this argument, we find that for all $m \geq 0$
$$ \min_{|x|\leq 1} v_*(x,t_0 2^{-m}) < -A_{k+1}t_0 . $$
This is in clear contradiction to \eqref{rough-v-bounds}.
Therefore, $v_* \geq -A_{k+1} t$ for all $|x| \leq 1$ and $t>0$.\\
\end{proof}

\section{Proof of Theorem \ref{main-corollary}}
\begin{proof}[The proof of Theorem \ref{main-corollary}]
The first claim of \eqref{limit-for-u} follows
immediately from the Hopf-Cole transformation and \eqref{homogenized}. That is, if $|x|^{d+\alpha}
> e^{|\lambda_1|t}$, then $v^\epsilon$ converges locally uniformly to $|\lambda_1| t - (d+\alpha) \log (|x|) < 0$,
and so $u^\epsilon = e^{v^\epsilon /\epsilon}$ converges locally uniformly to zero as $\epsilon \rightarrow 0$.\\
\\
Let $u$ be the solution to \eqref{main-eq} and assume the conclusions of Theorem \ref{main-theorem}.
Since both $u$ and $u^+$ are positive,  we write
$$ w = u / u^+ , $$
and note that, throughout this section, $w$ will always be in unscaled coordinates. Then, by \eqref{main-eq},
$$ u^+ w_t + \int (u^+(x) w(x,t) - u^+(x+y) w(x+y,t)) K(x,y) dy = f(x,u^+ w), $$
and
$$ u^+ w_t + \int (w(x,t)-w(x+y,t)) u^+(x+y) K(x,y) dy = f(x,u^+ w) - f(x, u^+) w, $$
that is,
\begin{equation}
w_t + \int (w(x,t)-w(x+y,t)) \frac{u^+(x+y)}{u^+(x)} K(x,y) dy
= w \left( \frac{f(x,u^+w)}{u^+w} - \frac{f(x,u^+)}{u^+} \right) .
\label{u-over-steady}
\end{equation}
We write
$$ \overline{K}(x,y) = \frac{u^+(x+y)}{u^+(x)} K(x,y) \ \text{ and } \
N(x,w) = \frac{f(x,u^+w)}{u^+w} - \frac{f(x,u^+)}{u^+} , $$
and note that $\overline{K}$ also satisfies \eqref{takis3}, \eqref{takis4} with different constants.\\
\\
It follows from  \eqref{takis2} that $N(x,w)$ is decreasing in $w$,
negative when $w > 1$, and positive when $0 < w < 1$.
Since $w$ decays at infinity like $u$, it follows that, for a given $t \geq 0$, $w(\cdot, t)$ achieves
its maximum value at some  $\bar{x}\in \RR^d$.\\
\\
Then, at $(\bar{x},t)$,  \eqref{u-over-steady} implies that
$$ w_t \leq w \left( \frac{f(\bar{x},u^+w}{u^+w}
- \frac{f(\bar{x},u^+)}{u^+} \right) , $$
and, for any $\sigma > 0$, if
$$ \lim_{t_0 \rightarrow \infty} \sup_{(x,t) \in \RR^d \times [t_0,\infty)} w(x,t) > 1+ \sigma , $$
then there exist $c_\sigma, t_\sigma>0$ depending only on $f$, $C_0$ and the global upper bound for $w$,
such that, for all $(x,t) \in \RR^d \times [t_\sigma,\infty)$,
$$ w(x,t) < C_0 + (1+\sigma) c_\sigma (t_\sigma-t),$$
that is, $C_0 + (1+\sigma) c_\sigma (t_\sigma-t)$ is a barrier from above for \eqref{u-over-steady}.
This is an obvious contradiction to \eqref{rough-bounds}. Since $\sigma > 0$ was arbitrary, we conclude that
\begin{equation}
\lim_{t \rightarrow \infty} \sup_{x \in \RR^d} w(x,t) \leq 1 .
\label{u-over-steady-sup}
\end{equation}
To complete the proof of \eqref{limit-for-u}, we essentially need the analog of \eqref{u-over-steady-sup} for the
infimum of $w$. However, $w$ decays to zero at infinity and lacks a global minimum. Instead, for
$\xi > 1$ and $M>1$, we consider the function
$$ h(x,t) := M+ e^{-|\lambda_1|t} |x|^{\xi (d+\alpha)} . $$
Writing $W = w h$, it follows from \eqref{rough-bounds} that $W(\cdot,t)$ must,
for every $t > 0$, achieve a global minimum at some point $\bar{x} \in \RR^d$. Moreover, since $\xi > 1$, for
all $t$ sufficiently large depending on $\xi$, $|\bar{x}|^{\xi(d+\alpha)} <
e^{|\lambda_1| t}$. It follows from \eqref{u-over-steady} that
\begin{align}
W_t &= -|\lambda_1| e^{-|\lambda_1|t} |x|^{\xi(d+\alpha)} w
+ h \left( w N(x,w) + \int (w(x+y,t)-w(x,t)) \overline{K}(x,y) dy \right) \nonumber \\
&= W \left(N(x,w) -
\frac{|\lambda_1| e^{-|\lambda_1| t} |x|^{\xi(d+\alpha)}}
{M+e^{-|\lambda_1| t} |x|^{\xi(d+\alpha)}} \right)
+ \int (W(x+y,t)-W(x,t)) \overline{K}(x,y) dy + \bar{I} ,
\label{W-eq}
\end{align}
with
\begin{align}
\bar{I} &= \int (h (x,t) - h (x+y,t)) w(x+y,t) \overline{K}(x,y) dy \nonumber \\
&= e^{-|\lambda_1| t} \int (|x|^{\xi(d+\alpha)}-|x+y|^{\xi(d+\alpha)})
w(x+y,t) \overline{K}(x,y) dy . \nonumber
\end{align}
For $T>0$ and for $B_0$ given by \eqref{rough-bounds}, let
$$ R = \exp \left( \frac{3T (B_0-|\lambda_1|)}{\alpha - (\xi-1)(d+\alpha)} \right),$$
and choose $\xi$ sufficiently close to $1$ that the exponent above is positive.\\
\\
Fix $\nu > 0$. Then, for all $\epsilon$ sufficiently small, Theorem \ref{main-theorem}
implies that
$$ v^\epsilon < \min(0,|\lambda_1|t-(d+\alpha) \log (|x|)) + \nu  \ \text{ in } \
\lbrace{ (x,t) \ : \ |x| \leq R , \ T \leq t \leq 2T \rbrace},$$
that is, for some $C_1>0$ independent of $x$, $t$, or $\epsilon$,
$$ u(\hat{x}|x|^{1/\epsilon},t/\epsilon) < \frac{C_1 e^{\nu/\epsilon}}{1+e^{-|\lambda_1|t/\epsilon}
|x|^{(d+\alpha)/\epsilon}} . $$
Recall that $u^+$ is bounded uniflormly from above and below. Hence, for a different constant $C_1' > 0$,
\begin{equation}
w(x,t) < \frac{C_1' e^{\nu/\epsilon}}{1+e^{-|\lambda_1|t}|x|^{d+\alpha}}  \ \ \text{ in } \ \
\lbrace{ (x,t) \ : \ |x| \leq R^{1/\epsilon} , \ T/\epsilon \leq t \leq 2T/\epsilon \rbrace} .
\label{upper-on-u-over-steady}
\end{equation}
Looking at the minimum $(\bar{x},\bar{t})$ of $w$ on $\RR^d \times [T/\epsilon,
2T/\epsilon]$, \eqref{W-eq} yields
\begin{equation}
W_t \geq \left( N(\bar{x},w) - |\lambda_1|/M \right) W + \bar{I} .
\label{W-min}
\end{equation}
We now show that, for an appropriate choice of parameters, $\underset{\epsilon \rightarrow 0}\liminf \ \bar{I} \geq 0$.
To do this, we split the domain of integration in the formula for $\bar{I}$ into three parts, that is,
\begin{equation}
\bar{I} \geq \bar{I}_1 + \bar{I}_2 + \bar{I}_3 ,
\label{bar-I-decomp}
\end{equation}
where, for $\rho := \epsilon e^{|\lambda_1| \bar{t}}$,
\begin{align}
\bar{I}_1 &:= -\left| \int_{|y|^{\xi(d+\alpha)} \leq \rho}
(h (\bar{x},\bar{t}) - h (\bar{x}+y,\bar{t})) w(\bar{x}+y,\bar{t}) \bar{K}(\bar{x},y) dy \right| , \nonumber \\
\bar{I}_2 &:= \int_{|y|^{\xi(d+\alpha)} > \rho , \ |\bar{x}+y| \leq R^{1/\epsilon}}
(h (\bar{x},\bar{t}) - h (\bar{x}+y,\bar{t})) w(\bar{x}+y,\bar{t}) \bar{K}(\bar{x},y) dy. \nonumber \\
\bar{I}_3 &:= \int_{|\bar{x}+y|>R^{1/\epsilon}}
(h (\bar{x},\bar{t}) - h (\bar{x}+y,\bar{t})) w(\bar{x}+y,\bar{t}) \bar{K}(\bar{x},y) dy. \nonumber
\end{align}
We proceed to estimate $\bar{I}_1$. This requires two slightly different treatments depending
on the value of $\alpha$.\\
\\
When $\alpha \in [1,2)$,  a change of variables yields
\begin{align}
& \int_{|y|\leq \rho} (h(\bar{x},\bar{t})-h(\bar{x}+y,\bar{t}))w(\bar{x}+y,\bar{t}) \overline{K}(\bar{x},y) dy
\nonumber \\
& \hspace{1.5cm} = \frac{1}{2} \int_{|y|\leq \rho} (2h(\bar{x},\bar{t})-h(\bar{x}+y,\bar{t})-h(\bar{x}-y,\bar{t}))
w(\bar{x}+y,\bar{t}) \overline{K}(\bar{x},y) dy
\nonumber \\
& \hspace{2.5cm} + \frac{1}{2} \int_{|y|\leq \rho} (h(\bar{x},\bar{t})-h(\bar{x}-y,\bar{t}))
(w(\bar{x}+y,\bar{t})-w(\bar{x}-y,\bar{t})) \overline{K}(\bar{x},y) dy.
\label{I1-prelim}
\end{align}
Recall that $w$ and $Dw$ are bounded uniformly, and observe that, if $|x|^{\xi (d+\alpha)} < e^{|\lambda_1| t}$, then
\begin{equation}
|Dh(x,t)| \lesssim e^{-\frac{|\lambda_1| t}{\xi (d+\alpha)}} \ \text{ and } \
|D^2 h(x,t)| \lesssim e^{-\frac{2|\lambda_1| t}{\xi (d+\alpha)}} .
\label{Dh-bound}
\end{equation}
It follows from  \eqref{I1-prelim}, \eqref{Dh-bound},
and Taylor's Theorem that
\begin{align}
\bar{I}_1 &= -\left| \int_{|y|^{\xi(d+\alpha)} \leq \rho}
(h(\bar{x},\bar{t})-h(\bar{x}+y,\bar{t}))w(\bar{x}+y,\bar{t}) \overline{K}(\bar{x},y) dy
\right| \nonumber \\
& \gtrsim - \int_{|y|^{\xi(d+\alpha)} \leq \rho}
\left( \sup_{|z|^{\xi(d+\alpha)} < e^{|\lambda_1|\bar{t}}}
\left( |D^2 h(z,\bar{t})| + |D h(z,\bar{t})| |D w(z,\bar{t})| \right) \right)
|y|^2 \overline{K}(\bar{x},y) dy \nonumber \\
&\gtrsim - e^{-\frac{|\lambda_1| \bar{t}}{\xi (d+\alpha)}}
\int_{|y|^{\xi(d+\alpha)} \leq \rho} |y|^{2-d-\alpha} dy
\gtrsim -e^{-\frac{|\lambda_1| \bar{t}}{\xi(d+\alpha)}}
\left(\epsilon e^{|\lambda_1|\bar{t}} \right)^{\frac{2-\alpha}{\xi(d+\alpha)}} .
\label{I-bar-short}
\end{align}
Since $\bar{t} \in [T/\epsilon,2T/\epsilon]$ and $\alpha \in [1,2)$,
it follows that the lower bound converges to zero as $\epsilon \rightarrow 0$. \\
\\
When $\alpha \in (0,1)$,  the argument  follows immediately from the calculations above.
Indeed, using the first bound of \eqref{Dh-bound} yields
\begin{equation}
\bar{I}_1 \gtrsim - \int_{|y| \leq \rho}
\left( \sup_{|z|^{d+\alpha} \leq e^{|\lambda_1|\bar{t}}} |Dh(z,\bar{t})| \right)
|y| \overline{K}(\bar{x},y) dy \gtrsim -e^{-\frac{|\lambda_1|\bar{t}}{d+\alpha}}
\left( \epsilon e^{|\lambda_1|\bar{t}} \right)^{\frac{1-\alpha}{\xi(d+\alpha)}} .
\label{I-bar-short-2}
\end{equation}
As before, the lower bound converges to zero for any $\alpha \in (0,1)$, hence,  in light of  \eqref{I-bar-short} and
\eqref{I-bar-short-2}, we find $\underset{\epsilon \rightarrow 0} \liminf \ \bar{I}_1 \geq 0$.\\
\\
Next, assume that $|y|^{\xi(d+\alpha)} > \epsilon e^{|\lambda_1|\bar{t}}$.
It follows from \eqref{takis4} that
$$ \bar{K}(\bar{x},y) \lesssim \frac{1}{\epsilon^{1/\xi}e^{|\lambda_1|\bar{t}/\xi}+|y|^{d+\alpha}},$$
while  \eqref{upper-on-u-over-steady} yields
\begin{align}
\bar{I}_2 &=
\int_{\lbrace{|y|>\rho , \ |\bar{x}+y| \leq R^{1/\epsilon} \rbrace}} (h(\bar{x},\bar{t})-h(\bar{x}+y,\bar{t}))
w(\bar{x}+y,\bar{t}) \bar{K}(\bar{x},y) dy
\nonumber \\
&\gtrsim -\int_{|x+y| \leq R^{1/\epsilon}} \frac{e^{\nu/\epsilon - |\lambda_1|\bar{t}} |\bar{x}+y|^{\xi(d+\alpha)}}
{1+e^{-|\lambda_1|\bar{t}} |\bar{x}+y|^{d+\alpha}} \frac{dy}{\epsilon^{1/\xi}e^{|\lambda_1|\bar{t}/\xi}+|y|^{d+\alpha}}
\nonumber \\
&\geq -e^{\nu/\epsilon} e^{(\xi-1)|\lambda_1|\bar{t}} e^{-\frac{\alpha |\lambda_1|\bar{t}}{d+\alpha}}
\int \frac{e^{-\xi |\lambda_1|\bar{t}} |\bar{x}+y|^{\xi(d+\alpha)}}{1+e^{-|\lambda_1|\bar{t}}
|\bar{x}+y|^{d+\alpha}} \frac{e^{-d|\lambda_1|\bar{t}/(d+\alpha)}dy}
{\epsilon^{1/\xi} e^{(1-\xi)|\lambda_1|\bar{t}/\xi}+e^{-|\lambda_1|\bar{t}}|y|^{d+\alpha}}
\nonumber \\
&= -\exp \left(\frac{\nu}{\epsilon} + (\xi-1)|\lambda_1| \bar{t} - \frac{\alpha |\lambda_1| \bar{t}}{d+\alpha} \right)
\int \frac{|\bar{x}e^{-|\lambda_1| \bar{t}/(d+\alpha)} + z|^{\xi(d+\alpha)}}
{1+|\bar{x}e^{-|\lambda_1| \bar{t}/(d+\alpha)} + z|^{d+\alpha}}
\frac{dz}{\epsilon^{1/\xi} e^{(1-\xi)|\lambda_1|\bar{t}/\xi} + |z|^{d+\alpha}} .
\label{I-bar-med}
\end{align}
Recall that $|\bar{x}| < e^{|\lambda_1|\bar{t}/(d+\alpha)}$.
The integral term above is finite so long as $\xi(d+\alpha) - (d+\alpha) < \alpha$, in which case
it is also bounded by $\epsilon^{-1/\xi} e^{(\xi-1)|\lambda_1|\bar{t} / \xi}$ uniformly in $\bar{x}$.
If $\nu$ is sufficiently small depending on $T$ but not on $\xi$, and $\xi$ is sufficiently small
depending on $\nu$, we have
$$ \frac{\nu}{\epsilon} + (\xi-1)|\lambda_1|\bar{t} - \frac{\alpha |\lambda_1| \bar{t}}{d+\alpha}
+ \left( 1-\frac{1}{\xi} \right) |\lambda_1| \bar{t} < 0 , $$
whence $\underset{\epsilon \rightarrow 0} \liminf \ \bar{I}_2 \geq 0$.\\
\\
For the remaining term, we use \eqref{rough-bounds} to conclude that
\begin{align}
\bar{I}_3 &=
\int_{|\bar{x}+y| > R^{1/\epsilon}} (h(\bar{x},\bar{t})-h(\bar{x}+y,\bar{t})) w(\bar{x}+y,\bar{t})
\bar{K}(\bar{x},y)dy
\nonumber \\
& \gtrsim -\int_{|\bar{x}+y| > R^{1/\epsilon}} \frac{e^{-|\lambda_1|\bar{t}}|\bar{x}+y|^{\xi(d+\alpha)}}
{1+ e^{-B_0 \bar{t}}|\bar{x}+y|^{d+\alpha}} \frac{dy}{1+|y|^{d+\alpha}}
\nonumber \\
& \geq -e^{(B_0-|\lambda_1|)\bar{t}} \int_{|y| > R^{1/\epsilon}/2} \frac{|\bar{x}+y|^{(\xi-1)(d+\alpha)}}
{1+|y|^{d+\alpha}} \gtrsim -e^{(B_0-|\lambda_1|)\bar{t}} R^{(-\alpha + (\xi-1)(d+\alpha))/\epsilon} .
\label{I-bar-long}
\end{align}
In light of the choice of $R$, the combined exponent above is negative and the lower bound converges to zero
as $\epsilon \rightarrow 0$. It then follows from \eqref{bar-I-decomp}, \eqref{I-bar-short}, \eqref{I-bar-short-2},
\eqref{I-bar-med}, and \eqref{I-bar-long} that
\begin{equation}
\liminf_{\epsilon \rightarrow 0} \bar{I} \geq 0 .
\label{liminf-bar-I}
\end{equation}
\\
Note that in order to estimate $\bar{I}_3$ we  needed $R$ to be substantially larger than $e^{|\lambda_1|\bar{t}/(d+\alpha)}$, depending on
$B_0$.\\
\\
To conclude the proof, fix  $\delta>0$ and choose $M$ large enough such that
$N(x, 1-\delta) > 3 |\lambda_1| / M$. If $W(x,t) < M(1-\delta)$, then $w(x,t) < 1-\delta$. Since $N(x,w)$ is
decreasing in $w$, it follows that
\begin{equation}
N(x,w) > 3 |\lambda_1|/M \ \text{ whenever } \ W < M(1-\delta) .
\label{intermediate-N-W}
\end{equation}
For all $T>0$, all $\epsilon$ sufficiently small, and all $t \in
[T/\epsilon, 2T/\epsilon]$, we know that $W(\cdot, t)$ achieves its minimum at some $\bar{x}\in \RR^d$.
It follows from \eqref{W-min}, \eqref{liminf-bar-I}, and \eqref{intermediate-N-W} that
$$ \max( W(\bar{x},t) - M(1-\delta), W_t(\bar{x},t) - W(\bar{x},t) |\lambda_1|/M) \geq 0 , $$
which implies that
$$ \lim_{t\rightarrow \infty} \inf_{|x|^{\xi(d+\alpha)} < e^{|\lambda_1| t}} W(x,t)
\geq M(1-\delta) . $$
For any $\xi>1$ and $M>0$, $h(\hat{x}|x|^{1/\epsilon},t/\epsilon)$ converges locally uniformly to $M$ if
$|x|^{\xi(d+\alpha)} < e^{|\lambda_1|t}$. Therefore,
$$ \lim_{t\rightarrow \infty} \inf_{|x|^{\xi(d+\alpha)} < e^{|\lambda_1| t}} w(x,t)
\geq 1-\delta . $$
Since $\delta>0$ was arbitrary and $\xi$ can be taken arbitrarily close to $1$, the second claim of
\eqref{limit-for-u} follows, and the proof is complete.\\
\\
We remark that the previous arguments  also establish the uniqueness of $u^+$ from the uniqueness of solutions to \eqref{main-eq}.
If $\bar{u}^+$ is another positive periodic steady state, the above argument would not be changed,
and we would conclude that $u^\epsilon(x,t) / u^+(\hat{x}|x|^{1/\epsilon})$ and $u^\epsilon(x,t) /
\bar{u}^+(\hat{x}|x|^{1/\epsilon})$ both converge locally uniformly to $1$ for $|x|^{d+\alpha} <
e^{|\lambda_1|t}$, which cannot happen if $u^+ \neq \bar{u}^+$.
\end{proof}
\appendix
\section{Viscosity Solutions and Generalized Limits}
\noindent{}Here we recall the classical definition, as it applies in the context of this paper,
for a viscosity solution to a variational inequality.
\begin{definition}
Let $\Psi_{(x_0,t_0)}$ be the collection of functions
$\psi: \RR^d \times [0,\infty) \rightarrow \RR$ that are smooth in a neighborhood of $(x_0,t_0)$ and
Lipschitz continuous everywhere else.\\
\\
A function $F: \RR^d \times [0,\infty) \rightarrow \RR$ is said to satisfy the variational inequality
$$ \min( F_t, F) \leq 0 $$
at the point $(x_0,t_0)$ in the \emph{viscosity sense} if, for all test functions $\psi \in \Psi_{(x_0,t_0)}$
such that $F-\psi$ achieves a global
maximum value of zero at $(x_0,t_0)$, that is, $\psi$ touches $F$ from above at $(x_0,t_0)$, then
$$ \text{either } \ F(x_0,t_0) \leq 0  \ \text{ or } \ \psi_t(x_0,t_0) \leq 0 . $$
Similarly, $F$ is said to satisfy the variational inequality
$$ \max( F_t, F) \geq 0 $$
at the point $(x_0,t_0)$ in the \emph{viscosity sense} if, for all test functions $\psi \in \Psi_{(x_0,t_0)}$
such that $F-\psi$ achieves a global
minimum value of zero at $(x_0,t_0)$, that is, $\psi$ touches $F$ from below at $(x_0,t_0)$, then
$$ \text{either } \ F(x_0,t_0) \geq 0  \ \text{ or } \ \psi_t(x_0,t_0) \geq 0 . $$
\label{def-viscosity}
\end{definition}
\noindent{}We also recall the definition of generalized upper and lower locally uniform limits.
\begin{definition}
For a bounded family of functions $\lbrace{ v^\epsilon \rbrace}_{\epsilon>0}$ with each
$v^\epsilon: \RR^d \times [0,\infty) \rightarrow \RR$ smooth, the half-relaxed upper limit is given by
$$ v^*(x,t) := \limsup_{\epsilon \rightarrow 0 , \ (y,s) \rightarrow (x,t)}
v^{\epsilon}(y,s) . $$
Similarly, the half-relaxed lower limit is given by
$$ v_*(x,t) := \liminf_{\epsilon \rightarrow 0, \ (y,s) \rightarrow (x,t)}
v^\epsilon (y,s) . $$
\label{def-lims}
\end{definition}
\noindent{}The half-relaxed upper and lower limits are, respectively,
upper- and lower-semicontinuous. As an immediate consequence of the definition, it follows that,
on any compact set $K \subset \RR^d \times \RR_+$,
\begin{equation}
\sup_{K} (v^\epsilon-v^*) \rightarrow 0 \ \text{ and } \ \inf_{K} (v^\epsilon-v_*) \rightarrow 0
\ \text{ uniformly in } \epsilon .
\label{loc-unif-limits}
\end{equation}
\noindent{}Lastly, we recall the definitions for viscosity sub- and super-solutions
used in Sections 4 and 5; see Barles and Imbert \cite{BI}.
\begin{definition}
An upper semicontinuous function $w_0$ is a sub-solution to \eqref{eq-v-eps} if,
given a smooth test function
$\phi$ such that $w_0 - \phi$ assumes a maximum value of $0$ at $(x_0,t_0)$, there exists
a family of open balls $B_{\delta_n}(x_0)$ with $\delta_n \rightarrow 0$ such that, for
all $n$,
\begin{align}
\phi_t & (x_0,t_0)+ \int_{B_{\delta_n}(x_0)} \left(1- \frac{\exp(\phi(\eta^\epsilon
(x_0,y),t_0)/\epsilon)}{\exp(\phi(x_0,t_0)/\epsilon)} \right) K dy \nonumber \\
&+ \int_{B_{\delta_n}(x_0)^c} \left(1- \frac{\exp(w_0(\eta^\epsilon
(x_0,y),t_0)/\epsilon)}{\exp(w_0(x_0,t_0)/\epsilon)} \right) K dy \leq
\frac{f(\hat{x_0}|x_0|^{1/\epsilon},\exp(w_0(x_0,t_0)/
\epsilon))}{\exp(w_0(x_0,t_0)/\epsilon)} .
\label{subsolution}
\end{align}
A lower semicontinuous function $w_1$ is a super-solution to \eqref{eq-v-eps} if,
given a smooth
test function $\phi$ such that $w_1 - \phi$ assumes a minimum value of $0$ at $(x_1,t_1)$,
there exists a family of open balls as above such that
\begin{align}
\phi_t & (x_1,t_1)+ \int_{B_{\delta_n}(x_1)} \left(1- \frac{\exp(\phi(\eta^\epsilon
(x_1,y),t_1)/\epsilon)}{\exp(\phi(x_1,t_1)/\epsilon)} \right) K dy \nonumber \\
&+ \int_{B_{\delta_n}(x_1)^c} \left(1- \frac{\exp(w_1(\eta^\epsilon
(x_1,y),t_1)/\epsilon)}{\exp(w_1(x_1,t_1)/\epsilon)} \right) K dy \geq
\frac{f(\hat{x_1}|x_1|^{1/\epsilon},\exp(w_1(x_1,t_1)/
\epsilon))}{\exp(w_1(x_1,t_1)/\epsilon)} .
\label{supersolution}
\end{align}
\label{def-visc-sub-sup}
\end{definition}

\section{Principal Eigenvalue for the Linearized Operator and Positive Steady States}
\noindent{}We show that the operator $L^\alpha - \mu$ has a positive periodic first eigenfunction.
A similar claim is made in \cite{BRR} for the case where $L^\alpha = (-\Delta)^{\alpha/2}$,
but the authors mainly show a Rayleigh-type formula for the principal eigenfunction. For
completeness, we provide here a proof outline for the following claim:
\begin{proposition}
For $\mu$ a smooth periodic function and $L^\alpha$ as in \eqref{L-alpha} with a kernel $K$
satisfying \eqref{takis3} and \eqref{takis4}, the operator $L^\alpha-\mu$ has a unique
positive periodic eigenfunction 
with eigenvalue $\lambda_1$, that is
\begin{equation}
L^\alpha e^g - \mu e^g = \lambda_1 e^g .
\label{eq-eigen}
\end{equation}
The eigenvalue $\lambda_1$ is simple in the algebraic and geometric sense, and is the bottom
of the spectrum for $L^\alpha - \mu$.
\label{prop-eigen}
\end{proposition}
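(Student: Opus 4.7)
The plan is to work on the torus $\TT^d = \RR^d/\ZZ^d$, on which both $\mu$ and $L^\alpha$ act via the 1-periodicity of \eqref{takis1} and \eqref{takis3}, and to combine a spectral-theoretic setup based on a nonlocal Dirichlet form with a Krein-Rutman argument for positive compact operators.

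First I would introduce the symmetric quadratic form
$$\mathcal{E}(u,v) := \tfrac{1}{2}\int_{\TT^d}\int_{\RR^d} (u(x+y)-u(x))(v(x+y)-v(x)) K(x,y)\,dy\,dx,$$
and verify, using the symmetry of $K$ in $y$ together with a change of variables, that $\mathcal{E}(u,v)$ differs from $\langle L^\alpha u, v\rangle_{L^2(\TT^d)}$ only by lower-order commutator terms involving $D_x K$, which are controlled by the $C^2$-regularity in \eqref{takis3}. The singular lower bound in \eqref{takis4} then yields $\mathcal{E}(u,u) \gtrsim [u]_{H^{\alpha/2}(\TT^d)}^2$, while the tail upper bound makes $\mathcal{E}$ continuous on $H^{\alpha/2}(\TT^d)$. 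Hence, after adding a sufficiently large shift $C$ to the operator, the bilinear form associated with $L^\alpha - \mu + C$ is coercive on $H^{\alpha/2}(\TT^d)$, and the compact embedding $H^{\alpha/2}(\TT^d) \hookrightarrow L^2(\TT^d)$ implies that $T := (L^\alpha - \mu + C)^{-1}$ is compact on $L^2(\TT^d)$ with real spectrum accumulating only at $0^+$.

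Next, I would apply the Krein-Rutman theorem to $T$, viewed as an operator on $C(\TT^d)$. For $C$ large, $T$ is bounded, compact, and positive, with positivity following from the standard maximum principle for $L^\alpha - \mu + C$; strong positivity follows from the strict positivity of $K$ in \eqref{takis3}, which makes the associated jump kernel irreducible. Krein-Rutman then produces a strictly positive periodic eigenfunction $e^g$ of $T$ with geometrically simple largest eigenvalue $(\lambda_1 + C)^{-1}$; undoing the shift recovers \eqref{eq-eigen}, and the spectral-radius characterization identifies $\lambda_1$ as the bottom of the spectrum of $L^\alpha - \mu$. Algebraic simplicity follows in the standard way: any generalized eigenvector $\phi$ would satisfy $(L^\alpha - \mu - \lambda_1)\phi = e^g$, and pairing against the positive left eigenfunction obtained by running the same argument for the formal adjoint of $L^\alpha$ would yield the contradiction $0 > 0$.

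The main obstacle is the Dirichlet-form analysis in the first step: because of the $x$-dependence of $K$, the operator $L^\alpha$ is not obviously self-adjoint on $L^2(\TT^d)$, so $\mathcal{E}(u,u)$ is not directly equal to $\langle L^\alpha u, u\rangle$. Controlling this discrepancy requires exploiting the symmetry of $K$ in $y$ to cancel the leading commutator and the $C^2$-regularity in $x$ to bound the remainder; only once this has been established does the singular lower bound of \eqref{takis4} deliver the $H^{\alpha/2}$-coercivity that drives the compact resolvent argument.
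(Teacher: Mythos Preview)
Your proposal is correct and follows essentially the same route as the paper: periodize to $\TT^d$, split $\langle L^\alpha u,u\rangle$ into the symmetric Dirichlet form plus a remainder controlled via the $C^2$-regularity of $K$ in $x$ to obtain a G\r{a}rding inequality, shift to gain coercivity on $H^{\alpha/2}(\TT^d)$, invoke Lax--Milgram and the compact Sobolev embedding to produce a compact inverse, and apply Krein--Rutman. You are in fact more careful than the paper about the positivity of the resolvent and the algebraic simplicity of $\lambda_1$, both of which the paper leaves implicit.
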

\noindent{}To handle the issue of periodicity,
we construct this eigenfunction on the torus $\TT^d = [0,1]^d$.
If $u$ is a smooth $1$-periodic function on $\RR^d$, then
it is equivalent to a smooth function defined on $\TT^d$. Moreover,
\begin{equation}
L^\alpha[u](x) = \int_{\RR^d} K(x,y) (u(x) - u(x+y)) dy =
\int_{\TT^d} \tilde{K}(x,y) (u(x) - u(x+y)) dy,
\label{periodic-L}
\end{equation}
where
$$ \tilde{K}(x,y) := \sum_{k \in \ZZ^d} K(x,y+k);$$
note that, in view of  \eqref{takis4}, the sum above converges for all $x \in \TT^d$ and $y \in \TT^d \setminus \lbrace{0\rbrace}$.\\
\\
It also follows from \eqref{takis3} and \eqref{takis4} that, for a constant $C>0$, $\mathcal{\tilde{K}} = \tilde{K}$,
$|D_x \tilde{K}|$, or $|D^2_x \tilde{K}|$, and  all $x$ and $y$,
\begin{equation}\label{K-tilde-bounds}
\tilde{K} \ \text{ is bounded from below, symmetric in }  \ y \
\text{and } \ C^{-1} \leq \mathcal{\tilde{K}}(x,y)|y|^{d+\alpha} \leq C.
\end{equation}
Proposition \ref{prop-eigen} then follows from the following lemma:
\begin{lemma}
Let $\mu$ a smooth function on $\TT^d$ and $L^\alpha$ as in \eqref{periodic-L}
satisfying \eqref{K-tilde-bounds}.
Then the operator $L^\alpha-\mu$ has a unique positive eigenfunction $e^g$ satisfying
\begin{equation}
 L^\alpha e^g - \mu e^g = \lambda_1 e^g .
\label{eq-eigen-2}
\end{equation}
The eigenvalue $\lambda_1$ is simple in the algebraic
and geometric sense, and is the bottom of the spectrum for
$L^\alpha - \mu$.
\label{prop-eigen-2}
\end{lemma}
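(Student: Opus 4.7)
The strategy is a variational/Krein--Rutman-style argument on the compact torus $\TT^d$, exploiting the symmetry of $\tilde K$ and the bound \eqref{K-tilde-bounds}.

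First I would introduce the natural quadratic form. By the symmetry in $y$ and the identity \eqref{symmetrized-L}, for $u\in C^\infty(\TT^d)$,
\begin{equation*}
\langle L^\alpha u,u\rangle \;=\; \tfrac12 \int_{\TT^d}\int_{\TT^d} \bigl(u(x+y)-u(x)\bigr)^{2}\tilde K(x,y)\,dy\,dx.
\end{equation*}
The lower bound in \eqref{K-tilde-bounds} gives $\tilde K(x,y)\gtrsim |y|^{-d-\alpha}$ for small $|y|$, so the Gagliardo seminorm $[u]_{H^{\alpha/2}(\TT^d)}^{2}$ is controlled by $\langle L^\alpha u,u\rangle$, while the upper bound yields the reverse control. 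Thus the bilinear form
\begin{equation*}
\mathcal B(u,v) := \langle L^\alpha u,v\rangle - \int_{\TT^d}\mu uv\,dx
\end{equation*}
extends to a continuous, symmetric, coercive-up-to-a-compact-perturbation form on $H^{\alpha/2}(\TT^d)$, since $\mu\in L^\infty$ and $H^{\alpha/2}(\TT^d)\hookrightarrow L^{2}(\TT^d)$ is compact.

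Next I would set up the Rayleigh quotient
\begin{equation*}
\lambda_{1} := \inf\Bigl\{ \mathcal B(u,u)\;:\; u\in H^{\alpha/2}(\TT^d),\ \|u\|_{L^{2}}=1\Bigr\},
\end{equation*}
which is finite by the coercivity bound $\mathcal B(u,u)\geq c[u]_{H^{\alpha/2}}^{2}-C\|u\|_{L^{2}}^{2}$. A minimizing sequence is bounded in $H^{\alpha/2}$, and after passing to a subsequence it converges weakly in $H^{\alpha/2}$ and strongly in $L^{2}$; by weak lower semicontinuity of the Dirichlet-type form and continuity of the $\mu$-term, the weak limit $w$ realizes the infimum. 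Since $\mathcal B(|w|,|w|)\le \mathcal B(w,w)$ (the inequality $(|a|-|b|)^{2}\le (a-b)^{2}$ applied under the double integral), we may replace $w$ by $|w|\ge 0$, still a minimizer, and the Euler--Lagrange equation is \eqref{eq-eigen-2} in the weak sense.

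Now I would upgrade to classical regularity and positivity. Standard bootstrap for operators of the form $L^\alpha$ with $C^{2}$, symmetric, comparable-to-$|y|^{-d-\alpha}$ kernel (apply De Giorgi/Silvestre-type Hölder estimates, then Schauder, using \eqref{K-tilde-bounds} for all derivatives $\mathcal K=\tilde K,\ |D_x\tilde K|,\ |D_x^{2}\tilde K|$) gives $w\in C^{2}(\TT^d)$. A strong maximum principle for $L^\alpha$ then applies: if $w(x_{0})=0$ at some $x_{0}$, testing \eqref{eq-eigen-2} at $x_{0}$ yields
\begin{equation*}
0 \;=\; L^\alpha w(x_{0}) - \mu(x_{0})w(x_{0}) - \lambda_{1} w(x_{0}) \;=\; -\int \tilde K(x_{0},y)\,w(x_{0}+y)\,dy,
\end{equation*}
which forces $w\equiv 0$ (since $\tilde K>0$ and $w\ge 0$). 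Hence $w>0$ everywhere, so $g:=\log w$ is well defined and \eqref{eq-eigen-2} holds.

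Finally I would establish simplicity and characterization as the bottom of the spectrum. Simplicity follows from a standard argument: if $\tilde w$ is any other eigenfunction at eigenvalue $\lambda_{1}$, then $w\pm \tau\tilde w\ge 0$ for $\tau$ small enough, so by the strong maximum principle argument $w\pm\tau\tilde w>0$ as long as $\tau<\tau^{*}:=\sup\{\tau:w-\tau\tilde w\ge 0\}$, while $w-\tau^{*}\tilde w$ must vanish somewhere and hence everywhere, forcing $\tilde w\in\RR\cdot w$; geometric simplicity then upgrades to algebraic simplicity by the self-adjointness of $\mathcal B$. That $\lambda_{1}$ is at the bottom of the spectrum is immediate from the definition as infimum of the Rayleigh quotient, since $\mathcal B$ is a bounded-below self-adjoint form on $L^{2}(\TT^d)$ with compact resolvent (again via the compact embedding $H^{\alpha/2}\hookrightarrow L^{2}$). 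The main obstacle in making this clean is the strong maximum principle step, where the potential singularity of $\tilde K$ at $y=0$ must be handled carefully; this is precisely where the singular lower bound in \eqref{K-tilde-bounds} is used, consistent with the remark in the paper that singularity of $K$ is only needed for the construction of $e^{g}$.
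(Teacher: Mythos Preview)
There is a genuine gap. Your first displayed identity
\[
\langle L^\alpha u,u\rangle \;=\; \tfrac12 \int_{\TT^d}\int_{\TT^d} \bigl(u(x+y)-u(x)\bigr)^{2}\tilde K(x,y)\,dy\,dx
\]
is false here: it requires $\tilde K(x+y,y)=\tilde K(x,y)$, i.e.\ translation invariance in $x$, which is \emph{not} assumed. The hypothesis \eqref{takis3} only gives periodicity in $x$ and symmetry in $y$; the identity \eqref{symmetrized-L} you cite is a symmetrization in $y$ of $L^\alpha[u](x)$ and says nothing about the bilinear form. In fact a short computation shows
\[
\langle L^\alpha u,v\rangle-\langle u,L^\alpha v\rangle=\int\!\!\int v(x)\,u(x+y)\bigl(\tilde K(x+y,y)-\tilde K(x,y)\bigr)\,dy\,dx,
\]
so $L^\alpha$ is \emph{not} self-adjoint and your form $\mathcal B$ is not symmetric. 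This breaks several steps at once: the Rayleigh quotient no longer characterizes an eigenvalue of $L^\alpha-\mu$, the Euler--Lagrange equation for a minimizer is not \eqref{eq-eigen-2}, and your appeal to ``self-adjointness of $\mathcal B$'' for algebraic simplicity fails.

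The paper's proof is designed precisely around this asymmetry. It writes $\langle L^\alpha u,u\rangle=L_1+L_2$ with $L_1=\int\!\int\tilde K(x,y)(u(x)-u(x+y))^2$ the ``symmetric'' piece (giving the $\dot H^{\alpha/2}$ control you wanted) and a genuine remainder $L_2=\int\!\int\tilde K(x,y)(u(x)-u(x+y))u(x+y)$, which is bounded below by $-C\|u\|_{L^2}^2$ using the $D_x^2\tilde K$ bound in \eqref{K-tilde-bounds}. This yields a G\aa rding inequality, after which Lax--Milgram inverts $L^\alpha-\mu+\mu_0$ and Krein--Rutman (which does not need self-adjointness) is applied to the compact positive inverse to produce the positive eigenfunction. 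If you want to salvage your outline, you must replace the clean quadratic-form identity by this $L_1+L_2$ decomposition and then switch from direct Rayleigh minimization to a Krein--Rutman argument on the resolvent.
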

\begin{proof}
We appeal to the Krein-Rutman Theorem \cite{K-R}.
For $X$ a Banach
space and $K: X \rightarrow X$ a positive operator, if the spectral radius
$r(K)$ is strictly positive, then $r(K)$ is an eigenvalue of $K$ with a positive
eigenfunction. We construct $K$ as an inverse to $L^\alpha$ after
a suitable shift to gain coercivity.\\
\\
We first show that $L^\alpha$ satisfies a G\r{a}rding-type inequality.
For $u \in C^\infty(\TT^d)$, we write 
\begin{equation}\label{L_2}
\langle L^\alpha[u],u \rangle = L_1 + L_2,
\end{equation}
where 
$$
L_1:=\int \int \tilde{K}(x,y)(u(x)-u(x+y))^2 dy dx \ \text{ and } \ L_2:=\int \int \tilde{K}(x,y)(u(x)-u(x+y))u(x+y) dy dx.
$$
It follows from \eqref{K-tilde-bounds} that
\begin{equation}
\| u \|_{\dot{H}^{\alpha/2}(\TT^d)}^2 \gtrsim
L_1 \approx \int \int \frac{(u(x)-u(x+y))^2}{|y|^{d+\alpha}} dx dy \gtrsim
\| u \|_{\dot{H}^{\alpha/2}(\TT^d)}^2;
\label{L_1}
\end{equation}
here  $\| \cdot \|_{\dot{H}^s(\TT^d)}$ and $\| \cdot \|_{H^s(\TT^d)}$ denote, respectively,
the homogeneous and inhomogeneous Sobolev norm, that is,
$$ \| u \|_{\dot{H}^s(\TT^d)} := \left( \int_{\TT^d} |\Lambda^s u (y)|^2 dy \right)^{1/2} \ \text{ and }
\ \| u \|_{H^s(\TT^d)} := \| u \|_{\dot{H}^s(\TT^d)} + \| u \|_{L^2(\TT^d)} . $$
We need to show that the remainder $L_2$ is bounded by $u$ in $L^2(\TT^d)$. Changing variables
and using \eqref{K-tilde-bounds} we find 
\begin{align*}
L_2 &= \int \int \tilde{K}(x+y,y)(u(x+y)-u(x))u(x) dy dx \\
&= \frac{1}{2} \int \int (\tilde{K}(x,y) - \tilde{K}(x+y,y)) (u(x)-u(x+y))u(x) dy dx \\
&= \frac{1}{2} \int \int (2 \tilde{K}(x,y) - \tilde{K}(x+y,y) - \tilde{K}(x-y,y)) u(x)^2 dy dx \\
&\gtrsim -\int \int |y|^2 \| D^2_x K \|_{L^\infty} |y|^{-d-\alpha} u(x)^2 dy dx .
\end{align*}
Since $|y|^{2-d-\alpha}$ is integrable on $\TT^d$ for all $\alpha \in (0,2)$,
we get that, for some fixed positive constants $\tilde{c}$, $\tilde{C}$, and $\tilde{D}$,
\begin{equation}
\tilde{D} \| u \|_{H^{\alpha/2}} \geq
\langle L^\alpha[u], u \rangle \geq \tilde{c} \| u \|_{\dot{H}^{\alpha/2}}^2 -
\tilde{C} \| u \|_{L^2}^2 .
\label{Garding}
\end{equation}
Then, for a constant $\mu_0$ sufficiently large, the operator $ \mathcal{L} := L^\alpha - \mu + \mu_0$
is positive and satisfies the hypotheses of the Lax-Milgram theorem in $H^{\alpha/2}(\TT^d)$.
Hence it is bijective and, by Sobolev embedding, has a compact everywhere defined inverse
$$ K:= \mathcal{L}^{-1} =
(L^\alpha - \mu + \mu_0)^{-1} : H^{\alpha/2}(\TT^d) \rightarrow Dom(L^\alpha) . $$
By the Krein-Rutman Theorem, there exists a positive eigenfunction $e^g$ of $K$
with eigenvalue $r(K)$. Therefore,
$$ L^\alpha[e^g] - \mu e^g = \left( \frac{1}{r(K)} - \mu_0 \right) e^g . $$
This is precisely \eqref{eq-eigen-2} with $\lambda_1 = r(K)^{-1}-\mu_0$. Evidently, $\lambda_1$ is the bottom eigenvalue for
the spectrum of $L^\alpha-\mu$.\\
\end{proof}
\noindent{}When the principal eigenvalue is negative, we also find nontrivial steady states
for \eqref{main-eq}. We sketch a proof of this fact in the context of the
periodic and nonlocal model \eqref{main-eq}.
\begin{proposition}
Let $\tilde{K}$ be given by \eqref{periodic-L} and $f$ be as in \eqref{takis2} and
assume that $\lambda_1 < 0$.
Then there exists a positive smooth function $u^+ : \TT^d \rightarrow \RR$ satisfying
\begin{equation}
L^\alpha[u^+]= f(x,u^+) .
\label{steady-equation}
\end{equation}
\label{prop-steady-state}
\end{proposition}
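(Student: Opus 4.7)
The plan is to construct $u^+$ by the classical sub- and super-solution method, exploiting the negativity of $\lambda_1$ and the KPP structure \eqref{f-KPP}–\eqref{E-assumption}.

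\textbf{Step 1: subsolution.} I would take $u_- = \delta\, e^g$ with $e^g$ the positive periodic principal eigenfunction from Proposition \ref{prop-eigen} and $\delta>0$ small. Using $L^\alpha[e^g] - \mu e^g = \lambda_1 e^g$ together with \eqref{f-KPP} and the bound $E(x,u)\leq \overline M u^2$ from \eqref{E-assumption},
\[
L^\alpha[\delta e^g] - f(x,\delta e^g) \;=\; \delta \lambda_1 e^g + E(x, \delta e^g) \;\leq\; \delta e^g\bigl(\lambda_1 + \overline M\, \delta\, e^g\bigr),
\]
and choosing $\delta < |\lambda_1|/(\overline M\,\|e^g\|)$ makes this strictly negative. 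Hence $u_-$ is a strict, positive, periodic subsolution to \eqref{steady-equation}.

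\textbf{Step 2: supersolution.} I would use the constant $u_+ = M$ from \eqref{takis2}. Since $L^\alpha$ annihilates constants and $f(x,M)\leq 0$, clearly $L^\alpha[M] \geq f(x,M)$, and $u_-\leq u_+$ on $\TT^d$ for $\delta$ small.

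\textbf{Step 3: monotone dynamics.} I would run the Cauchy problem \eqref{main-eq} on the torus with initial datum $u_-$. The comparison principle for \eqref{main-eq} (which is standard in this integro-differential periodic setting, and already used throughout Sections 4--5) gives $u_- \leq u(\cdot,t) \leq u_+$ for all $t\geq 0$, and preserves periodicity. Because $u_-$ is a \emph{stationary} subsolution, comparing $u(\cdot,t)$ with $u(\cdot,t+h)$ for $h>0$ yields monotonicity in $t$. Thus $u(x,t) \uparrow u^+(x)$ pointwise to a bounded periodic limit with $u^+ \geq \delta e^g > 0$.

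\textbf{Step 4: passage to the limit and regularity.} Using the G\r{a}rding-type inequality \eqref{Garding} for $L^\alpha$ on the torus together with the smoothness of $K$ in $x$, parabolic regularity for this class of nonlocal operators gives uniform-in-$t$ Sobolev and H\"older bounds on $u(\cdot,t)$; Arzel\`a--Ascoli then upgrades the pointwise monotone convergence to uniform convergence of $u(\cdot,t)$ and of $L^\alpha[u(\cdot,t)]$. Passing to the limit in \eqref{main-eq} shows that $u^+$ is a smooth, positive, periodic solution of \eqref{steady-equation}. (Uniqueness, as already observed at the end of Section 6, follows from Theorem \ref{main-corollary}.)

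The main obstacle is Step 4: namely, ensuring that the monotone pointwise limit is in fact a strong (not merely viscosity) solution of the stationary equation, which requires the compactness of the trajectory $\{u(\cdot,t)\}_{t\geq 0}$ in a space where $L^\alpha$ is continuous. The G\r{a}rding bound \eqref{Garding}, the smoothness of $\tilde K$, and compactness of $\TT^d$ are what make this routine despite the nonlocal, anisotropic nature of $L^\alpha$.
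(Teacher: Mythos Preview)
Your argument is correct, and Steps 1--2 (the choice of $\delta e^g$ as subsolution and $M$ as supersolution) coincide exactly with the paper's. The difference lies in how you pass from the ordered pair $u_-\le u_+$ to the steady state. You run the time-dependent problem \eqref{main-eq} from $u_-$ and use parabolic comparison plus time-monotonicity to extract the limit; the paper instead uses the classical elliptic monotone-iteration scheme, solving
\[
L^\alpha[u_{k+1}]+N_0\,u_{k+1}=f(x,u_k)+N_0\,u_k,\qquad u_0=\delta e^g,
\]
with $N_0=\max_{x,u}(-\partial_u f(x,u))>0$, and invoking the maximum principle to get $u_k\uparrow u^+$. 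The paper's route is slightly more self-contained here: the linear solvability of each step is already provided by the Lax--Milgram argument of Lemma~\ref{prop-eigen-2}, and one only needs the elliptic maximum principle, so no parabolic regularity or compactness of trajectories is required. Your approach, on the other hand, ties the existence of $u^+$ directly to the dynamics of \eqref{main-eq}, which is conceptually natural given the role $u^+$ plays in Theorem~\ref{main-corollary}, at the cost of importing the parabolic theory (existence, comparison, and uniform-in-$t$ regularity) that the paper otherwise takes for granted but does not develop in the appendix.
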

\begin{proof}
Let $M$ be the constant given in \eqref{takis2}. Then $\overline{u} := M$ is a supersolution to \eqref{steady-equation}.
Since $\lambda_1 < 0$, there exists a sufficiently small $\delta>0$ so that
$$\mu \delta e^g + \lambda_1 \delta e^{g} \leq f(x, \delta e^{g}) . $$
Then $\underline{u} := \delta e^g$ is a positive subsolution.\\
\\
Let $N_0 = \max_{x\in \TT^d, \ u\in \RR} (-\partial_u f(x,u))$ and note that $N_0>0$.
We then take $u_0 = \underline{u}$ and obtain the sequence $\lbrace{ u_k \rbrace}_{k\in \ZZ}$ by iteratively solving
\begin{equation}
L^\alpha[u_{k+1}]+N_0u_{k+1} = f(x,u_k)+N_0u_k .
\label{steady-iteration}
\end{equation}
It follows from  the maximum principle that $u_1 \geq u_0$. Arguing by induction and using the maximum principle we get that  $u_k \leq \overline{u}$ and $u_{k+1} \geq u_k$ for all $k$.\\
\\
There must then exist a pointwise limit $u^+$ on $\TT^d$ which is bounded from 
below by $u_0$, and, hence, is positive, smooth and satisfies $ L^\alpha[u^+]= f(x,u^+)$.
\end{proof}

\section{Proof of Lemma \ref{lem-ACC}}
\noindent{}We prove here a robust bound on the size of the diffusion $L^\alpha$ applied to
a particular function, used in the construction of the relaxed upper and lower bounds of
Proposition \ref{prop-rough}. Recall that $h(x,t)= (1+e^{-\lambda t}|x|^{d+\alpha})^{-1}$.\\
\\
The main observation is that the derivatives of $h$ decay algebraically in $|x|$.
That is,
$$ \partial_i h(x,t) = -(d+\alpha) e^{-\lambda t} x_i |x|^{d+\alpha-2} h^2 $$
and
$$ \partial_{ij}^2h(x,t) = -(d+\alpha) e^{-\lambda t} h^2 |x|^{d+\alpha-2}
\left( \delta_{ij} + (d+\alpha-2) \frac{x_i x_j}{|x|^2} - 2 (d+\alpha) e^{-\lambda t} x_i x_j
|x|^{d+\alpha-2} h \right) . $$
It follows from Taylor's Theorem that, for some  constants independent of $\lambda$, $x$ and  $t$,
\begin{equation}
| h(x,t) - h(x+y,t) + Dh (x,t) \cdot y| \lesssim \sup_{|z|<|y|}
\frac{e^{-2 \lambda t/(d+\alpha)}}
{(1+e^{-\lambda t}|x+z|^{d+\alpha})^{\frac{d+\alpha+2}{d+\alpha}}} |y|^2 .
\label{last-short-bound}
\end{equation}
Let
$$R = \frac{1}{2} \left( \frac{e^{\lambda t}}{2} \right)^{1/(d+\alpha)}
\left( 1+e^{-\lambda t}|x|^{d+\alpha} \right)^{1/(d+\alpha)} , $$
and observe that
$$ L^\alpha[h] = L_1[h] + L_2[h],$$ 
where
$$L_1[h] := \int_{|y| \leq R} K(x,y)(h(x,t)-h(x+y,t)) dy \ \text{ and } \ 
L_2[h]:=\int_{|y| > R} K(x,y)(h(x,t)-h(x+y,t)) dy . $$
Recall \eqref{takis4} and observe that  it follows from \eqref{last-short-bound} that
\begin{align*}
&|L_1[h]| = \left| \int_{|y| \leq R} K(x,y)(h(x,t)-h(x+y,t)) dy \right| \lesssim
\int_{|y| \leq R} \| \nabla^2 h(\cdot, t) \|_{B_R(x)} |y|^{2-d-\alpha} dy \\
& \hspace{2cm} \lesssim R^{2-\alpha} e^{-2 \lambda t/(d+\alpha)}
\left( 1+e^{-\lambda t}\max(0,|x|-R)^{d+\alpha} \right)^{-\frac{d+\alpha+2}{d+\alpha}} .
\end{align*}
As long as $e^{-\lambda t}|x|^{d+\alpha} > 1$, we have that $R < |x|/2$. Therefore,
\begin{equation}
|L_1[h]| \lesssim
\begin{cases}
e^{-\alpha \lambda t/(d+\alpha)} (1+e^{-\lambda t}|x|^{d+\alpha})^{-1 -\alpha/(d+\alpha)},
& \text{if} \ e^{-\lambda t}|x|^{d+\alpha} > 1 \\
e^{-\alpha \lambda t/(d+\alpha)}, & \text{otherwise}.
\end{cases}
\label{L1-ACC}
\end{equation}
For  $L_2[h]$, we have
$$ \left| \int_{|y|>R} K(x,y) h(x,t) dy \right| \lesssim h(x,t) R^{-\alpha}
\lesssim \frac{e^{-\alpha \lambda t/(d+\alpha)}}{(1+e^{-\lambda t}|x|^{d+\alpha})^{1+\alpha/(d+\alpha)}} , $$
and
$$ \left| \int_{|y|>R} K(x,y) h(x+y,t) dy \right| \lesssim R^{-d-\alpha}
\int \frac{dy}{1+e^{-\lambda t} |x+y|^{d+\alpha}} \lesssim R^{-d-\alpha} e^{-\lambda t d / (d+\alpha)}
\lesssim \frac{e^{-\alpha \lambda t/(d+\alpha)}}{1+e^{-\lambda t}|x|^{d+\alpha}} . $$
The bounds above, combined with \eqref{L1-ACC} establish \eqref{ACC-bound}, completing the proof.

\end{document}